\title{Best Arm Identification for Contaminated Bandits}
\newtheorem{theorem}{Theorem}
\newtheorem{corollary}[theorem]{Corollary}
\newtheorem{lemma}[theorem]{Lemma}
\newtheorem{definition}{Definition}
\newtheorem{remark}[theorem]{Remark}
\newcommand{\Real}{\mathbb{R}}
\newcommand{\E}{\mathbb{E}}
\newcommand{\Prob}{\mathbb{P}}
\newcommand{\Ber}{\text{Ber}}
\newcommand{\SBer}{\text{SBer}}
\newcommand{\Unif}{\text{Unif}}
\newcommand{\UnifI}{\text{Unif}([0,1])}
\newcommand{\calA}{\mathcal{A}}
\DeclareMathOperator{\calF}{\mathcal{F}}
\newcommand{\half}{\tfrac{1}{2}}
\newcommand{\eps}{\varepsilon}
\newcommand{\logdel}{\log\tfrac{1}{\delta}}
\newcommand{\logtdel}{\log\tfrac{2}{\delta}}
\newcommand{\QLF}{Q_{L,F}}
\newcommand{\QRF}{Q_{R,F}}
\newcommand{\med}{{m_1}}
\newcommand{\mLtilde}{{\tilde{m}_{L}}}
\newcommand{\mRtilde}{{\tilde{m}_{R}}}
\newcommand{\hatm}{\hat{m}_1}
\newcommand{\hatmed}{\hatm} 
\newcommand{\hatsigmed}{\hat{m}_2}
\newcommand{\sigmed}{m_2}
\newcommand{\sigmedsq}{m_2^2}
\newcommand{\sigmedmax}{\bar{m}_2}
\newcommand{\epsmax}{\bar{\eps}}
\newcommand{\tmax}{\bar{t}}
\newcommand{\Famquantile}{\mathcal{H}_{\tmax, R}}
\newcommand{\Famnosig}{\mathcal{F}_{\tmax, B}}
\newcommand{\Fam}{\mathcal{F}_{\tmax, B, \sigmedmax,\kappa}}
\newcommand{\Uncertaintygen}{U_{\eps,B,\sigmed}}
\newcommand{\UncertaintyF}{U_{\eps,B,\sigmed(F)}}
\newcommand{\Uncertaintygenprescient}{U_{\eps,B,\sigmed}^{\textsc{(malicious)}}}
\newcommand{\UncertaintyFprescient}{U_{\eps,B,\sigmed(F)}^{\textsc{(malicious)}}}
\newcommand{\plusminus}{\raisebox{.2ex}{$\scriptstyle\pm$}}
\newcommand{\DS}{\displaystyle}
\newcommand{\PP}{\mathbb P}
\newcommand{\BAI}{\textsc{BAI}}
\newcommand{\CBAI}{\textsc{CBAI}}
\newcommand{\PIBAI}{\textsc{PIBAI}}
\newcommand{\BAIspace}{\textsc{BAI}\;}
\newcommand{\CBAIspace}{\textsc{CBAI}\;}
\newcommand{\PIBAIspace}{\textsc{PIBAI}\;}
\newcommand{\hatI}{\hat{I}}
\DeclareMathOperator*{\argmax}{arg\,max}
\DeclareBoldMathCommand{\I}{I}
\DeclareBoldMathCommand{\e}{e}
\DeclareBoldMathCommand{\f}{f}
\DeclareBoldMathCommand{\F}{F}
\DeclareBoldMathCommand{\g}{g}
\DeclareBoldMathCommand{\a}{a}
\DeclareBoldMathCommand{\b}{b}
\DeclareBoldMathCommand{\c}{c}
\DeclareBoldMathCommand{\d}{d}
\DeclareBoldMathCommand{\m}{m}
\DeclareBoldMathCommand{\p}{p}
\DeclareBoldMathCommand{\q}{q}
\DeclareBoldMathCommand{\r}{r}
\DeclareBoldMathCommand{\R}{\mathbb{R}}
\DeclareBoldMathCommand{\V}{V}
\DeclareBoldMathCommand{\x}{x}
\DeclareBoldMathCommand{\t}{t}
\DeclareBoldMathCommand{\X}{X}
\DeclareBoldMathCommand{\Y}{Y}
\DeclareBoldMathCommand{\z}{z}
\DeclareBoldMathCommand{\Z}{Z}
\DeclareBoldMathCommand{\M}{M}
\DeclareBoldMathCommand{\n}{n}
\DeclareBoldMathCommand{\ssigma}{\sigma}
\DeclareBoldMathCommand{\eepsilon}{\epsilon}
\DeclareBoldMathCommand{\SSigma}{\Sigma}
\DeclareBoldMathCommand{\OOmega}{\Omega}
\DeclareBoldMathCommand{\y}{y}
\DeclareBoldMathCommand{\U}{U}
\DeclareBoldMathCommand{\w}{w}
\DeclareBoldMathCommand{\W}{W}
\DeclareBoldMathCommand{\L}{L}
\DeclareBoldMathCommand{\s}{s}
\DeclareBoldMathCommand{\A}{A}
\DeclareBoldMathCommand{\B}{B}
\DeclareBoldMathCommand{\C}{C}
\DeclareBoldMathCommand{\D}{D}
\DeclareBoldMathCommand{\G}{G}
\DeclareBoldMathCommand{\P}{P}
\DeclareBoldMathCommand{\Q}{Q}
\DeclareBoldMathCommand{\mmu}{\mu}
\DeclareBoldMathCommand{\ones}{1}
\DeclareBoldMathCommand{\zeros}{0}
\newcommand{\Reals}{\mathbb R}
\newcommand{\TODO}[1]{
\ifmmode
\text{\textcolor{red}{TODO: #1}}
\else
\textcolor{red}{TODO: #1}
\fi
}
\author{
        Jason Altschuler\\
        Massachusetts Institute of Technology\\
        \texttt{jasonalt@mit.edu}
        \and
        Victor-Emmanuel Brunel\\
        Massachusetts Institute of Technology\\
        \texttt{vebrunel@mit.edu}\\
        \and
        Alan Malek \\
        Massachusetts Institute of Technology\\
        \texttt{amalek@mit.edu}
}
\begin{document}

\date{}
\maketitle

\begin{abstract}
	This paper studies active learning in the context of robust statistics. Specifically, we propose a variant of the Best Arm Identification problem for \emph{contaminated bandits}, where each arm pull has probability $\eps$ of generating a sample from an arbitrary contamination distribution instead of the true underlying distribution. The goal is to identify the best (or approximately best) true distribution with high probability, with a secondary goal of providing guarantees on the quality of this distribution. The primary challenge of the contaminated bandit setting is that the true distributions are only partially identifiable, even with infinite samples. To address this, we develop tight, non-asymptotic sample complexity bounds for high-probability estimation of the first two robust moments (median and median absolute deviation) from contaminated samples. These concentration inequalities are the main technical contributions of the paper and may be of independent interest. Using these results, we adapt several classical Best Arm Identification algorithms to the contaminated bandit setting and derive sample complexity upper bounds for our problem. Finally, we provide matching information-theoretic lower bounds on the sample complexity (up to a small logarithmic factor).
\end{abstract}

\newpage
\tableofcontents
\newpage


\section{Introduction}\label{sec:intro}
Consider Pat, an aspiring machine learning researcher deciding between working in a statistics, mathematics, or computer science department. Pat's sole criterion is salary, so Pat surveys current academics with the goal of finding the department with highest median income. However, some subset of the data will be inaccurate: some respondents obscure their salaries for privacy, some convert currency incorrectly, and some do not read the question and report yearly instead of monthly salary, etc. How should Pat target his or her surveys, in an adaptive (online) fashion, to find the highest paying department with high probability in the presence of contaminated data?

In this paper, we study the Best Arm Identification (\BAI) problem for multi-armed bandits where observed rewards are not completely trustworthy. The multi-armed bandit problem has received extensive study in the last three decades \citep{lai1985asymptotically,bubeck2012regret}. 
We study the \emph{fixed confidence}, or ($\alpha, \delta$)-PAC, \BAIspace problem, in which the learner must identify an $\alpha$-suboptimal arm with probability at least $1 - \delta$, using as few samples as possible. Most \BAIspace algorithms for the fixed-confidence setting assume i.i.d. rewards from distributions with relatively strict control on the tails, such as boundedness or more generally sub-Gaussianity \citep{jamieson2014lil}. However, for Pat, the data are neither i.i.d. nor from a distribution with controllable tails. How can we model this data, and how can we optimally explore these arms?

To answer the first question, we turn to robust statistics, which has studied such questions for over fifty years. In a seminal paper, \cite{huber1964robust} introduced the contamination model, which we adapt to the multi-armed bandit model by proposing the \emph{Contaminated Best Arm Identification problem} (\CBAI). This is formally defined in Section~\ref{sec:cbai}, but is informally described as follows. There are $k \geq 2$ arms, each endowed with a fixed base distribution $F_i$ and arbitrary contamination distributions $G_{i,t}$ for $t\geq 1$. We place absolutely no assumptions on $G_{i,t}$. 
When arm $i$ is pulled in round $t$, the learner receives a sample that with probability $1 - \eps$ is drawn from the arm's true distribution $F_i$, and with the remaining probability $\eps$ is drawn from an arbitrary contamination distribution $G_{i,t}$. The goal is to identify the arm whose true distribution has the highest (or approximately highest) median. The median is the goal rather than the mean since the distributions are not assumed to have finite first moments; and even if they do, the true distributions' means are impossible to estimate from contaminated samples since the contaminations may be arbitrary. A key point is that suboptimality of the arms is based on the quality of the underlying true distributions $F_i$, not the contaminated distributions $\tilde{F}_{i,t}$ of the observed samples. Note that existing \BAIspace algorithms, fed with samples from $\tilde{F}_{i,t}$, will not necessarily work. 

This contamination model nicely fits Pat's problem: samples are usually trustworthy, but sometimes they are completely off and cannot be modeled by a distribution with controlled tails. Additionally, the nature of contamination changes with the respondent, and hence $G_{i,t}$ should be considered as time varying, which completely breaks the usual i.i.d.\ data assumption. Finally, Pat wants to determine the department with highest \emph{true} median salary, not highest \emph{contaminated} median (or mean) salary.

The contaminated bandit setup also naturally models many other situations that the classical bandit setup cannot, such as any bandit problem where data may be subject to measurement or recording error with some probability. For example, consider the canonical problem of optimal experiment design where we are measuring drug responses, but in a setting where samples can be corrupted or where test results can be incorrectly measured or recorded. Another example is testing new software features, where yet-unfixed bugs may distort responses but will be fixed before release. More scenarios are discussed in Subsection~\ref{subsec:previous-work} when comparing to other models in previous works, and in Subsection~\ref{subsec:cbai:adversary} after the formal definition of the \CBAIspace problem.

Importantly, the \CBAIspace problem is nontrivially harder than the \BAIspace problem since there are no consistent estimators for statistics (including the mean or median) of $F_i$ if the contaminations are allowed to be arbitrary. Under some mild technical assumptions on $F_i$, the contamination can cause the median of $\tilde F_{i,t}$ to be anywhere in an $\Theta(\eps)$-neighborhood of the median of $F_i$, and hence we can only determine the median of $F_i$ up to some unavoidable estimation bias, $U_i$, of order $\eps$ (see Section~\ref{sec:cbai} for details). This leads us to generalize our study to the more abstract \emph{Partially Identifiable Best Arm Identification} (\PIBAI) problem (defined formally in Section~\ref{sec:pibai}), which includes \CBAIspace as a special case.

This \PIBAIspace problem can be seen as an active-learning version of the classical problem of estimation under partial identifiability, which has been studied for most of the last century in the econometrics literature \citep{marschak1944random, manski2009identification}. A canonical problem in this field is trying to learn the age distribution of a population by only asking in which decade each subject was born; clearly the median age can only be learned up to certain unidentifiability regions.

\subsection{Our Contributions}
To the best of our knowledge, this is the first paper to consider the Best Arm Identification problem with arbitrary contaminations. 
This contaminated bandit setup models many practical problems that the classical bandit setup cannot.
However, developing algorithms for this setup requires overcoming the challenge of partial identifiability of the arms' true distributions. Indeed, it is not hard to show that the adversary's ability to inject arbitrary contaminations can render ``similar'' underlying distributions indistinguishable, even with access to infinite samples.
\par We analyze this \CBAIspace problem under three models of the adversary's power: the \emph{oblivious} adversary chooses all contamination distributions a priori; the \emph{prescient} adversary may choose the contamination distributions as a function of all realizations (past and future) of the true rewards and knowledge of when the learner will observe contaminated samples; and the \emph{malicious} adversary may, in addition, correlate when the learner observes contaminated samples with the true rewards. Subsection~\ref{subsec:cbai:adversary} gives formal definitions of these adversarial settings, as well as motivating examples and applications for each.
\par Our technical contributions can be divided into three parts:
\begin{enumerate}
\item[(i)] we prove tight, non-asymptotic sample complexity bounds for estimation of the first two robust moments (median and median absolute deviation) from contaminated samples,
\item[(ii)] we use the statistical results in (i) to develop efficient algorithms for the \CBAIspace problem and provide sample complexity upper bounds for the fixed-confidence setting, and
\item[(iii)] we prove matching information-theoretic lower bounds showing that our algorithms have optimal sample complexity (up to small logarithmic factors).
\end{enumerate}
We elaborate on each of these below.
\par \textbf{Contribution (i).} These concentration inequalities are the main technical contributions of the paper and may be of independent interest to the robust statistics community. We consider estimating statistics of a single arm from contaminated samples and show that although estimation of standard moments (mean, variance) is impossible, estimation of robust moments (median, median absolute deviation) is possible. Specifically, for each of the three adversarial models, we show that with probability at least $1 - \delta$, the empirical median of the contaminated samples lies in a region around the true median of width $U_i+E_{n,\delta}$, where $U_i$ is some unavoidable bias that depends on quantiles of $F_i$ and the power of the adversary, $n$ is the number of samples, and $E_{n,\delta}$ is a confidence-interval term that decreases at the optimal $\sqrt{\tfrac{\log 1/\delta}{n}}$ rate. Our results neatly capture the effect of the adversary's power by deriving different $U_i$ for each scenario, thereby precisely quantifying the hardness of the three different adversarial settings. We also present non-asymptotic sample complexity guarantees for estimation of the second robust moment, often called the Median Absolute Deviation (MAD), under all three adversarial settings. The MAD is a robust measure of the spread of $F_i$ and controls the width $U_i$ of the median's unidentifiability region.
\par \textbf{Contribution (ii).} We show that, surprisingly, several classical \BAIspace algorithms are readily adaptable to the \PIBAIspace problem. This suggests a certain inherent robustness of these classical bandit algorithms. We first present these algorithms for an abstract version of the \PIBAIspace problem so that our algorithmic results may be easily transferrable to other application domains with different statistics of interest or different contamination models. We then combine these general results with our statistical results from (i) to obtain PAC algorithms for \CBAI, the problem this paper focuses on. We give fixed-confidence sample complexity guarantees that mirror the sample complexity guarantees for \BAIspace in the classical stochastic multi-armed bandit setup. The main difference is that \BAIspace sample complexities depend on the suboptimality ``gaps'' $\Delta_i := p_{i^*} - p_i$ between the statistics of the optimal arm $i^*$ and each suboptimal arm $i$, whereas our \CBAIspace sample complexities depend on the suboptimality ``\emph{effective gaps}'' $\tilde{\Delta}_i := (p_{i^*} - U_{i^*}) - (p_{i} + U_i) = \Delta_i - (U_{i^*} + U_i)$, which account for the unavoidable estimation uncertainties in the most pessimistic way. We also show how to apply the MAD estimations results from (i) to obtain guarantees on the quality of the underlying distribution of the selected arm.
\par \textbf{Contribution (iii).} We prove matching information-theoretic lower bounds (up to a small logarithmic factor) on the sample complexity of \CBAIspace via a reduction to classical lower bounds for the stochastic multi-armed bandit problem. We argue that for \CBAIspace the effective gap $\tilde\Delta_i$ is the right analog of the traditional gap since it appears in matching ways in both our upper and lower bounds.

\subsection{Previous Work}\label{subsec:previous-work}
The Best Arm Identification problem in the fixed-confidence setting has a long history, starting from work by \cite{bechhofer1968sequential} and \cite{lai1985asymptotically}. Recent interest from the learning theory community was sparked by the seminal paper by \cite{even2002pac}, which proposed several algorithms obtaining instance-adaptive sample complexity bounds. Since then, there has been significant work on the algorithmic side
\citep[see work by][]{kalyanakrishnan2012pac,gabillon2012best,karnin2013almost,jamieson2013finding,jamieson2014lil}. Concurrently, a parallel line of work has focused on improving lower bounds, starting with the $2$-armed setting~\citep{chernoff1972sequential, anthony2009neural}, extending to the multi-armed setting~\citep{mannor2004sample}, and, more recently, continuing with more finely tuned lower bounds that include properties of the arm distributions aside from the gaps~\citep{chen2015optimal,kaufmann2016complexity,garivier2016optimal}.

In the cumulative regret setting, the online learning literature has considered both stochastic bandits with mild tail assumptions (for example, \cite{bubeck2013bandits} only assumed the existence of a $(1+\eps)$ moment) and algorithms that obtain near-optimal regret guarantees if the environment is stochastic or adversarial \citep{bubeck2012regret}. The partial monitoring problem, where the learner only knows the loss up to some subset~\citep{bartok2014partial}, is also loosely similar to the \CBAIspace problem in the sense that both problems feature partial identification. We note, however, that minimizing cumulative regret is not a reasonable goal in our contamination setup since the contaminations can be arbitrary and even unbounded. (Furthermore, even when boundedness is assumed, minimizing cumulative regret can still be a poor criteria; see below for concrete examples.)

The existing literature closest to our work studies the \BAIspace problem in settings more general than i.i.d.\ arms, for example stochastic but non-stationary distributions \citep{Allesiardo17NS,Allesiardo2017SL} or arbitrary rewards where each arm converges to a limit \citep{jamieson2016non,li2016hyperband}. However, neither setting fits the contamination model or allows for arbitrary perturbations.
\par A related contamination model is studied by~\cite{seldin2014one}; however, they make a boundedness assumption which makes their setup drastically different from ours.\footnote{Another difference is that our setup and results also work for more powerful types of contaminating adversaries.
} Specifically, they consider cumulative regret minimization in a setting where rewards are bounded in $[0,1]$ and contaminations can be arbitrary. Because of this $[0,1]$ boundedness, contamination can move the means by at most $\eps$, which is why it is reasonable that they base their algorithms on the contaminated means. However, the best contaminated mean is \emph{not} a reasonable proxy for the best mean without the $[0,1]$ boundedness assumption. First off, the contaminated distributions may not even have finite first moments. Moreover, when rewards are bounded but only within a large range, the contaminated mean can deviate from the true mean by $\eps$ times the size of that range. This can lead to an error bound that is extremely loose -- sometimes to the point of being useless for prediction or estimation -- compared to the tight bound given by quantiles (see Lemma~\ref{lem:change-med}) which our algorithms achieve (see Subsection~\ref{subsec:cbai-pac}).
\par For instance, consider the adaptive survey example mentioned earlier. There might be a wide range of (true) academic salaries, say between zero and a million dollars. As such, contaminating an $\eps$ fraction of the data could move the mean by roughly $10^6 \cdot \eps$; even for a reasonable value of $\eps = 0.1$, this error bound of roughly $10^5$ is so large that it may drown out the actual information that the survey was trying to investigate. On the other hand, the median may be moved a significantly smaller amount since the (true) distribution of academic salaries might have reasonably narrow quantiles around the median (for example, being somewhat bell-shaped is sufficient). This performance improvement is intuitively explained by the fact that most academic salaries are not at the extremes of $0$ or a million dollars, but rather are fairly regular around the median. An identical phenomenon occurs in the canonical problem of optimal experiment design where we are measuring drug responses but samples can be corrupted or where test results can be incorrectly measured or recorded. Indeed, if the measurements of interests are, say, blood pressure or weight, the values might only be bounded within a large range, but the true distribution might have reasonably narrow quantiles around the median.
\par We note that the need for robust bandit algorithms is further motivated by the recent work of \cite{jun2018adversarial}, which shows that in a similar model, an adversary can make certain classical bandit algorithms perform very poorly by injecting only a small amount of contamination into the observed samples.

This paper also makes connections between several long bodies of work. The contamination model~\citep{huber1964robust} has a long history of more than fifty years in robust statistics; for examples, see work by \cite{hampel1974influence, maronna1976robust, rousseeuw2005robust, hampel2011robust}. Contamination models and malicious errors also have a long history in computer science, including the classical work of \cite{valiant1985learning,kearns1993learning} and a recent  burst of results on algorithms that handle estimation of means and variances \citep{lai2016agnostic}, efficient estimation in high dimensions \citep{diakonikolas2018robustly},
 PCA \citep{cherapanamjeri2017thresholding},
and general learning \citep{charikar2017learning} in the presence of outliers or corrupted data. Finally, the partial identification literature from econometrics also has a rich history \citep{marschak1944random, horowitz1995identification, manski2009identification,romano2010inference,bontemps2012set}.


\subsection{Notation}\label{subsec:notation}
We denote the Dirac measure at a point $x \in \Real$ by $\delta_x$, the Bernoulli distribution with parameter $p \in [0,1]$ by $\Ber(p)$, and the uniform distribution over an interval $[a,b]$ by $\Unif([a,b])$. The interval $[a-b, a+b]$ is denoted by $[a\plusminus b]$, the set of non-negative real numbers by $\Real_{\geq 0}$, the set of positive integers by $\mathbb{N}$, and the set $\{1, \dots, n\}$ by $[n]$ for $n \in \mathbb{N}$. We abbreviate ``with high probability'' by ``w.h.p.'' and ``cumulative distribution function'' by ``cdf''. 
\par Let $F$ be a cdf. We denote its left and right quantiles, respectively, by $\QLF(p) := \inf\{x \in \Real:F(x) \geq p \}$ and $\QRF(p) := \inf\{x \in \Real:F(x) > p \}$; the need for this technical distinction arises when $F$ is not strictly increasing. We denote the set of medians of $F$ by $\med(F) := [\QLF(\half), \QRF(\half)]$. When $F$ has a unique median, we overload $\med(F)$ to be this point rather than a singleton set containing it. For shorthand, we often write $m_1(X)$ for a random variable $X$ to denote $m_1(F)$, where $F$ is the law of $X$. When $F$ has a unique median, we denote the median absolute deviation (MAD) of $F$ by $\sigmed(F) := \med(|X - m_1(F)|)$ where $X \sim F$. When $m_2(F)$ is unique, we define $m_4(F) := \med(||X - m_1(F)| - m_2(F)|)$. Note that $m_1(F)$, $m_2(F)$, and $m_4(F)$ are robust analogues of centered first (mean), second (variance), and fourth (kurtosis) moments, respectively.
\par The empirical median of a (possibly random) sequence $x_1, \dots, x_n \in \Real$ is denoted by $\hatmed(x_1, \dots, x_n)$: if $n$ is odd, this is the middle value; and if $n$ is even, it is the average of the middle two values. The empirical MAD $\hatsigmed(x_1, \dots, x_n)$ is then defined as $\hatmed(|x_1 - \hatmed(x_1, \dots, x_n)|, \dots, |x_n - \hatmed(x_1, \dots, x_n)|)$.

\subsection{Outline}
Section~\ref{sec:cbai} formally defines the \CBAIspace problem and the power of the adversary, describes several motivating applications and examples, and discusses the primary challenge of the problem: partial identifiability. Section~\ref{sec:pibai} presents our algorithmic results for the abstract setting of best arm identification under partial identifiability (the \PIBAIspace problem). We state these algorithms in this general setting so that they may be easily transferrable later to other application domains with different statistics of interest or different contamination models. Section~\ref{sec:median} then specializes to the \CBAIspace problem. In order to implement the aforementioned general-purpose \PIBAIspace algorithms for \CBAI, concentration results are needed for estimation of medians given contaminated samples. These statistical results are developed in Subsection~\ref{subsec:contaminated.estimation}, and then used to derive upper bounds on the sample complexity of our \CBAIspace algorithms in Subsection~\ref{subsec:cbai-pac}. Subsection~\ref{subsec:lb} gives matching information-theoretic lower bounds on the sample complexity, showing that our algorithms are optimal (up to small logarithmic factors). This answers the primary question the paper sets out to solve: understanding the complexity of identifying the arm with best median given contaminated samples. Section~\ref{sec:quality} then turns to our secondary goal: providing guarantees on the distribution of the selected arm. This goal requires additionally estimating the second robust moment. Subsection~\ref{subsec:mad-finite} develops the necessary statistical results, which are then used in Subsection~\ref{subsec:cbai-quantile} provide our algorithmic results. Section~\ref{sec:conclusion} concludes and discusses several open problems. 


\section{Formal Setup}
\label{sec:cbai}

Here we formally define the \textit{Contaminated Best Arm Identification problem (CBAI)}. Let $k \geq 2$ be the number of arms, $\eps \in (0, \half)$ be the contamination level, $\{F_i\}_{i \in [k]}$ be the true but unknown distributions, and $\{G_{i,t}\}_{i \in [k], t \in \mathbb{N}}$ be arbitrary contamination distributions. This induces contaminated distributions $\tilde{F}_{i,t}$, samples from which are equal in distribution to $(1 - D_{i,t})Y_{i,t} + D_{i,t}Z_{i,t}$, where $D_{i,t} \sim \Ber(\eps)$, $Y_{i,t} \sim F_i$, and $Z_{i,t} \sim G_{i,t}$. Note that if all of these random variables are independent, then each $\tilde{F}_{i,t}$ is simply equal to the contaminated mixture model $(1-\eps)F_{i} + \eps G_{i,t}$. However, we generalize by also considering the setting when the $Y_{i,t}, D_{i,t}, Z_{i,t}$ are not all independent, which allows an adversary to further obfuscate samples by adapting the distributions of the $D_{i,t}$ and $Z_{i,t}$ based on the realizations of the $Y_{i,t}$  (that is, by coupling these random variables); see below for details.

At each iteration $t$, a \CBAIspace algorithm chooses an arm $I_t \in [k]$ to pull and receives a sample $X_{I_t,t}$ distributed according to the corresponding contaminated distribution $\tilde{F}_{I_t,t}$. After $T$ iterations (a possibly random stopping time that the algorithm may choose), the algorithm outputs an arm $\hat{I} \in [k]$. For $\alpha \geq 0$ and $\delta \in (0,1)$, the algorithm is said to be $(\alpha,\delta)$\textit{-PAC} if, with probability at least $1 - \delta$, $\hatI$ has median within $\alpha + U$ of the optimal; that is,
\begin{align}
\Prob\left(m_1(F_{\hatI}) \geq \max_{i \in [k]} m_1(F_i) - (\alpha + U) \right)
\geq 1 - \delta,
\label{def:pac}
\end{align}
where $U$ is the unavoidable uncertainty term in median estimation that is induced by partial identifiability (see Subsection~\ref{subsec:cbai:partial-identifiability} for a discussion of $U$, and see Subsection~\ref{subsec:contaminated.estimation} for an explicit computation of this quantity). Thus, the goal is to find an algorithm achieving the PAC guarantee in \eqref{def:pac} with small expected sample complexity $T$.

\subsection{Power of the Adversary}\label{subsec:cbai:adversary} As is typical in online learning problems, it is important to define the power of the adversary since this affects the complexity of the resulting problem. Interestingly, \CBAIspace is still possible even when we grant the adversary significant power. We consider three settings, presented in increasing order of adversarial power. The key differences between these different types of adversaries are twofold: (1) whether they can choose the contaminated distributions ``presciently'' based on all other realizations $\{Y_{i,t}, D_{i,t}\}_{i \in [k], t \geq 1}$ both past and future; and (2) whether they can ``maliciously'' couple the distributions of each $D_{i,t}$ with the corresponding $Y_{i,t}$, subject only to the constraint that the marginals $D_{i,t} \sim \Ber(\eps)$ and $Y_{i,t} \sim F_i$ stay correct. 
\begin{itemize}
\item \textbf{Oblivious adversary.} For all $i \in [k]$, the triples $\{(Y_{i,t}, D_{i,t}, Z_{i,t})\}_{t \geq 1}$ are independent, and for all $t \geq 1$, $Y_{i,t}\sim F_i$, $D_{i,t}\sim \Ber(\varepsilon)$, and $Y_{i,t}$ and $D_{i,t}$ are independent. 
\item \textbf{Prescient adversary.} Same as the oblivious adversary except that the contaminations $Z_{i,t}$ may depend on everything else (i.e. $\{Y_{j,s},D_{j,s},Z_{j,s} \}_{j \in [k], s \geq 1}$). Formally, for all $i \in [k]$, the pairs $\{(Y_{i,t},D_{i,t})\}_{t\geq 1}$ are independent, and for all $t \geq 1$, $Y_{i,t}\sim F_i$, $D_{i,t}\sim \Ber(\varepsilon)$, $Y_{i,t}$ and $D_{i,t}$ are independent, and $Z_{i,t}$ may depend on all $\{Y_{j,s},D_{j,s},Z_{j,s}\}_{j\in [k], s \geq 1}$.

\item \textbf{Malicious adversary.} Same as the prescient adversary except that $Y_{i,t}$ and $D_{i,t}$ do not need to be independent. Formally, for all $i\in [k]$, the pairs $\{(Y_{i,t},D_{i,t})\}_{t\geq 1}$ are independent, and for all $t \geq 1$, $Y_{i,t}\sim F_i$, $D_{i,t}\sim \Ber(\varepsilon)$, and $Z_{i,t}$ may depend on all $\{Y_{j,s},D_{j,s},Z_{j,s}\}_{j\in [k], s \geq 1}$.

\end{itemize}
We note that for all three of these adversarial settings, we do not require independence between the outputs $(Y_{i,t}, D_{i,t}, Z_{i,t})$ across the arms for each fixed $t$.
\par The oblivious adversary is well-motivated, as many real-world problems where contaminations may occur fit naturally into this model. Indeed, this setting encompasses any bandit problem where data may be subject to measurement or recording error with some probability, such as measuring drug responses for clinical trials~\citep{lai1985asymptotically}, conducting surveys~\citep{national2017principles}, or testing new software features with yet-unfixed bugs as mentioned in the introduction. The theory we develop extends naturally and with little modification for the prescient and malicious adversaries. Moreover, these different settings allow for the modeling of many other real-world scenarios in which the contamination models are not as simple. For instance, all data may already exist but not yet be released to the learning algorithm. If adversaries have access to all the data from the beginning, they may be able to contaminate samples depending on all the data -- this is captured by the prescient setting. The malicious setting includes, for example, scenarios in which adversaries (for example, hackers) can successfully contaminate a given sample depending on the amount of effort they put into it, and the adversaries can try harder to contaminate certain samples depending on their values.
\par Perhaps surprisingly, we will show that the sample complexity is the same for both oblivious and prescient adversaries. Indeed for these two settings, we will prove our upper bounds for the more powerful setting of prescient adversaries, and our lower bounds for the less powerful setting of oblivious adversaries. We also note that the rate for malicious adversaries is only worse by at most a ``factor of $2$''; see Section~\ref{sec:median} for a precise statement. 

\subsection{The Challenge of Partial Identifiability of the Median}\label{subsec:cbai:partial-identifiability}
\label{sec:median-infinite}
In the introduction, we emphasized the point that a contaminating adversary can render different underlying distributions of an arm statistically indistinguishable. That is, even with infinite samples, it is impossible to estimate statistics of an arm's true distribution exactly. Consider, for example, the problem of estimating the median of a single arm with true distribution $F$ against an oblivious adversary, which is the weakest of our three adversarial settings. Define $S$ to be the set of all distributions $F'$ for which there exists adversarially chosen distributions $G$ and $G'$ such that $(1 - \eps)F + \eps G=(1 - \eps)F' + \eps G'$. How large is this set $S$, and in particular, how far can the medians of distributions in $S$ be from the median of $F$?

The following simple example shows that $S$ is non-trivial (and thus in particular contains more than just $F$). Let $F$ be the uniform distribution on the interval $[-1,1]$, and let $G$ be the uniform distribution on $[-1-c,-1]\cup[1,1+c]$, where $c=\varepsilon(1-\varepsilon)^{-1}$. The contaminated distribution $\tilde F:=(1-\varepsilon)F+\varepsilon G$ is the uniform distribution on $[-1-c,1+c]$. However, for any $p\in [-c,c]$, $\tilde F$ is also equal to $(1-\varepsilon)F(\cdot-p)+\varepsilon G_p$, where $G_p$ is the uniform distribution over $[-1-c,1+c]\setminus [-1+p,1+p]$. We conclude that $F$ is statistically indistinguishable from any of the translations $\{F(\cdot - p) \}_{p \in [-c,c]}$. Hence, $S$ is non-trivial and contains distributions with medians at least $\Omega(c)$ away from $\med(F)$. Even in this simple setting, infinite samples only allow us to identify the median of $F$ at most up to the unidentifiability region $[-c, c]$.

In fact, this toy example captures the correct dependence on $\varepsilon$ of how far the median of the contaminated distribution can be shifted, as formalized by the following simple lemma.
\begin{lemma}\label{lem:change-med}
For any $\eps \in (0, \half)$, any distribution $F$, and any median $m \in \med(F)$,
\[
\sup_{\substack{\mathrm{distribution}\; G, \\ \tilde{m} \in \med((1 - \eps) F + \eps G)}}
\left|\tilde{m} - m \right| = 
\max\left\{
\QRF\left(\frac{1}{2(1-\eps)} \right) - m, 
m - \QLF\left(\frac{1-2\eps}{2(1-\eps)}\right)
\right\}.
\]
\end{lemma}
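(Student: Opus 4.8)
The plan is to first characterize exactly which real numbers $t$ can be a median of the contaminated distribution $\tilde F := (1-\eps)F + \eps G$ for \emph{some} contamination $G$, and then to maximize $|t - m|$ over that set. Recall that $t \in \med(H)$ for a cdf $H$ iff $H(t^-) \le \half \le H(t)$. Writing $\tilde F(x) = (1-\eps)F(x) + \eps G(x)$ and using only $G(t^-) \ge 0$ and $G(t) \le 1$, one direction is immediate: if $t \in \med(\tilde F)$ then $(1-\eps)F(t^-) \le \tilde F(t^-) \le \half$ and $(1-\eps)F(t) + \eps \ge \tilde F(t) \ge \half$. For the converse, the single choice $G = \delta_t$ suffices, since then $\tilde F(t^-) = (1-\eps)F(t^-)$ and $\tilde F(t) = (1-\eps)F(t) + \eps$ exactly; intuitively $\delta_t$ is simultaneously optimal because it pushes all contaminated mass onto $t$, minimizing $\tilde F$ just below $t$ and maximizing it at $t$. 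So the set of achievable medians is
\[
\Bigl\{ t \in \Real : (1-\eps)F(t^-) \le \tfrac12 \ \text{ and }\ (1-\eps)F(t) + \eps \ge \tfrac12 \Bigr\}.
\]

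Next I would translate these two inequalities into quantiles. Put $p_1 := \tfrac{1-2\eps}{2(1-\eps)}$ and $p_2 := \tfrac{1}{2(1-\eps)}$; since $\eps \in (0,\half)$ we have $0 < p_1 < \half < p_2 < 1$, so both $\QLF(p_1)$ and $\QRF(p_2)$ are finite. The condition $(1-\eps)F(t)+\eps \ge \half$ is $F(t) \ge p_1$, and by monotonicity and right-continuity of $F$, $\{t : F(t) \ge p_1\} = [\QLF(p_1), \infty)$. The condition $(1-\eps)F(t^-) \le \half$ is $F(t^-) \le p_2$, and using $F(t^-) = \sup_{s<t} F(s)$ together with $\QRF(p_2) = \inf\{x : F(x) > p_2\}$ one checks that $\{t : F(t^-)\le p_2\} = (-\infty, \QRF(p_2)]$. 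Hence the achievable medians form the closed interval $[\QLF(p_1), \QRF(p_2)]$, which is nonempty because it contains every $m \in \med(F)$ (as $p_1 < \half < p_2$ gives $\QLF(p_1) \le \QLF(\half) \le m \le \QRF(\half) \le \QRF(p_2)$).

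Finally, maximizing $|t - m|$ over $t$ in this closed interval containing $m$ gives exactly $\max\{\QRF(p_2) - m,\ m - \QLF(p_1)\}$ (and the supremum is attained); substituting the values of $p_1,p_2$ yields the claimed formula. The only mildly delicate step is the identity $\{t : F(t^-) \le p_2\} = (-\infty, \QRF(p_2)]$, which must be argued carefully from the definitions of the left limit and of the \emph{right} quantile $\QRF$ --- this is precisely where the $\QRF$-versus-$\QLF$ distinction enters --- while the rest is routine bookkeeping.
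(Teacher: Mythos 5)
Your proof is correct and follows essentially the same route as the paper: the converse direction uses the same bounds $(1-\eps)F \le \tilde F \le (1-\eps)F + \eps$ translated into quantiles, and achievability again comes from Dirac contaminations. The only (pleasant) difference is that you place the Dirac mass at the target point $t$ itself rather than at $\pm n$, which yields an exact characterization of the achievable medians as the closed interval $[\QLF(\tfrac{1-2\eps}{2(1-\eps)}), \QRF(\tfrac{1}{2(1-\eps)})]$ and shows the supremum is attained.
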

\begin{proof}
  The fact that the left hand side is no smaller than the right hand side is straightforward: to shift the median to the right (resp. left), let $\{G_n\}_{n \in \mathbb{N}}$ be a sequence of distributions which are Dirac measures at $n$ (resp. $-n$).

  We now prove the reverse direction, the ``$\leq$'' inequality for any fixed distribution $G$. For shorthand, denote the contaminated distribution by $\tilde{F} := (1 - \eps) F + \eps G$, and denote its left and right medians by $\mLtilde := Q_{L,\tilde{F}}(\half)$ and $\mRtilde := Q_{R,\tilde{F}}(\half)$, respectively. Since every median of $\tilde{F}$ lies within $[\mLtilde, \mRtilde]$, it suffices to show that $\mLtilde \geq  \QLF(\tfrac{1-2\eps}{2(1-\eps)})$ and $\mRtilde \leq \QRF(\tfrac{1}{2(1-\eps)})$. We bound $\mLtilde$ presently, as bounding $\mRtilde$ follows by a similar argument or by simply applying the $\mLtilde$ bound to $F(-\cdot)$. By definition of $\mLtilde$, for all $t > 0$, $\half \leq \tilde{F}(\mLtilde + t) = (1 - \eps)F(\mLtilde + t) + \eps G(\mLtilde + t) \leq (1 - \eps)F(\mLtilde + t) + \eps$, where the last step is because $G(\mLtilde + t) \leq 1$ since $G$ is a distribution. Rearranging yields $F(\mLtilde + t) \geq  \tfrac{1-2\eps}{2(1 - \eps)}$, which implies that $\mLtilde \geq \QLF(\tfrac{1-2\eps}{2(1 - \eps)})$.
\end{proof}

\par Finally, we note that for some simple contamination models, partial identifiability in \CBAIspace is not a problem for identifying the arm with the best median. For example, if all arms are contaminated with a common distribution, then although the true medians are only partially identifiable, the contaminated medians are ordered in the same way as the true medians, albeit perhaps not strictly. However, for general (arbitrary) contaminations, such an ordering invariance is clearly not guaranteed.


\section{Algorithms for Best Arm Identification Under Partial Identifiability}\label{sec:pibai}
Our algorithms for \CBAIspace actually work for the more general problem of best arm identification in \textit{partially identified} settings. Specifically, consider any setting where the statistic (for example, the median or mean) which measures the goodness of an arm can be estimated only up to some unavoidable error term due to lack of identifiability. The main result of this section is informally that certain \BAIspace algorithms for the classical stochastic multi-armed bandit setting can be adapted with little modification to such partially identified settings. Perhaps surprisingly, this suggests a certain innate robustness of these existing classical \BAIspace algorithms.

We present our algorithms in this section for this slightly more abstract problem of \textit{Partially Identifiable Best-Arm-Identification} problem (\PIBAI), which we define formally below. This abstraction allows our algorithmic results to later be transferred easily to other variants of the Best Arm Identification problem with different contamination models or different statistics of interest. We will show in Section~\ref{sec:median} how to adapt this to the \CBAIspace problem, the main focus of the paper, after first developing the necessary statistical results.
\par We now formally define the setup of \PIBAI. Let $k \geq 2$ be the number of arms. For each arm $i\in [k]$, consider a family of distributions $\mathcal D_i=\{D_i(p_i,G)\}_{G\in\mathcal G}$ where $p_i$ is a real-valued measure of the quality of arm $i$ and $G$ is a nuisance parameter in some abstract space $\mathcal G$. We let $i^* := \argmax_{i \in [k]} p_i$ be the best arm. We assume the existence of non-negative unavoidable biases $\{U_i\}_{i \in [k]}$ satisfying 
\begin{itemize}
\item[(i)] even from infinitely many independent samples $X_t, t=1,2, \ldots$ with $X_t\sim D_i(p_i,G_t)$ for some unknown, possibly varying $G_t\in\mathcal G$ ($t\geq 1$), it is impossible to estimate $p_i$ more precisely than the region $[p_i\, \plusminus\, U_i]$, and
\item[(ii)] there exists some estimator that, for any $\alpha > 0$ and $\delta \in (0,1)$, uses $n_{\alpha, \delta} = O(\alpha^{-2}\logdel)$ i.i.d. samples\footnote{Using the same techniques as presented in this paper, one can also consider \PIBAIspace for general $n_{\alpha,\delta} \neq O(\alpha^{-2}\logdel)$ and compute the resulting sample complexities. However, for simplicity of presentation, we assume that $n_{\alpha,\delta} = O(\alpha^{-2}\logdel)$ since anyways this is the natural (and optimal) quantity for many estimation problems such as estimating a median from contaminated samples (see Corollaries~\ref{corol:est-med} and~\ref{corol:est-med-malicious}), or using Chernoff bounds to estimate the mean of a $[0,1]$-supported distribution for classical stochastic MAB, etc.} from $D_i$ to output an estimate $\hat{p}_i$ satisfying
\begin{align}
\Prob\Big(
\hat{p}_i \in [p_i \,\plusminus\, (U_i + \alpha)]
\Big)
\geq 1 - \delta.
\label{eq:estimator}
\end{align}
\end{itemize}
The \PIBAIspace problem is then precisely the standard fixed-confidence stochastic bandit problem using these distributions where in each iteration $t$, the algorithm chooses an arm $I_t \in [k]$ and receives a sample from $D_{I_t}(p_{I_t},G_t)$ for some unknown $G_t\in\mathcal G$.

By the partial identifiability property (i), it is clear that even given infinite samples, it is impossible to distinguish between the optimal arm $i^*$ and any suboptimal arm $i \neq i^*$ satisfying $p_i + U_i \geq p_{i^*} - U_{i^*}$. Therefore, we assume henceforth the statistically possible setting in which the \emph{effective gaps} $\tilde\Delta_i := (p_{i^*} - U_{i^*}) - (p_i + U_i)$ are strictly positive for each suboptimal arm $i \neq i^*$.

For any $\alpha \geq 0$, arm $i$ is said to be $\alpha$\textit{-suboptimal} if $\tilde\Delta_i \leq \alpha$. Moreover, for any $\alpha \geq 0$ and $\delta \in (0,1)$, a \PIBAIspace algorithm is said to be $(\alpha, \delta)$\textit{-PAC} if it outputs an arm $\hat{I}$ that is $\alpha$-suboptimal with probability at least $1 - \delta$. That is,
\[
\Prob\big(
\tilde\Delta_{\hat{I}} \leq \alpha
\big)
\geq 1 - \delta,
\]
where the above probability is taken over the possible randomness of the samples, the estimator from (ii), and the \PIBAIspace algorithm. 
\par We now present algorithms for the \PIBAIspace problem. First, in Subsection~\ref{subsec:alg-simple}, we present a simple algorithm that performs uniform exploration among all arms. Next, in Subsection~\ref{subsec:alg-instance-adaptive}, we obtain more refined, instance-adaptive sample complexity bounds by adapting the \textsc{Successive Elimination} algorithm of~\citep{even2006action}. All algorithms use as a blackbox an estimator satisfying the above property (ii) of the \PIBAIspace problem.

\subsection{Simple Algorithm}\label{subsec:alg-simple}
\begin{algorithm}[t]
\For{$i \in [k]$}{
Sample arm $i$ for $n_{\alpha/2,\delta/k}$ times and produce estimate $\hat{p}_i$
}
Output $\hatI := \max_{i \in [k]} \hat{p}_i$.
\caption{Simple uniform exploration algorithm for $\PIBAI$.}
\label{alg:Simple}
\end{algorithm}
A simple ($\alpha, \delta$)-PAC \PIBAIspace algorithm is the following: pull each of the $k$ arms $n_{\alpha/2, \delta/k}$ times to create estimates $\hat{p}_i$ and output the arm $\hat{I} := \max_{i \in [k]} \hat{p}_i$ with the highest estimate. Pseudocode is given in Algorithm~\ref{alg:Simple}.

\begin{theorem} \label{Thm:Simple}
For any $\alpha>0$ and $\delta\in (0,1)$,  Algorithm~\ref{alg:Simple} is an $(\alpha,\delta)$-PAC \PIBAIspace algorithm with sample complexity $O(k n_{\alpha/2,\delta/k}) = O\left(\frac{k}{\alpha^2}\log \tfrac{k}{\delta} \right)$.
\end{theorem}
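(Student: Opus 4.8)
The plan is to handle the two assertions separately. The sample-complexity bound is immediate: Algorithm~\ref{alg:Simple} pulls each of the $k$ arms exactly $n_{\alpha/2,\delta/k}$ times, for a total of $k\,n_{\alpha/2,\delta/k}$ pulls. Since property (ii) of the \PIBAIspace setup guarantees $n_{\alpha,\delta} = O(\alpha^{-2}\logdel)$ for all accuracy/confidence pairs, substituting accuracy $\alpha/2$ and confidence $\delta/k$ yields $n_{\alpha/2,\delta/k} = O\big((\alpha/2)^{-2}\log\tfrac{k}{\delta}\big) = O\big(\alpha^{-2}\log\tfrac{k}{\delta}\big)$, hence the claimed $O\big(\tfrac{k}{\alpha^2}\log\tfrac{k}{\delta}\big)$.

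For the PAC guarantee, I would first invoke the estimator promised by property (ii), run with accuracy $\alpha/2$ and confidence $\delta/k$ on each arm's $n_{\alpha/2,\delta/k}$ samples. For each $i\in[k]$ this yields $\hat p_i \in [p_i\,\plusminus\,(U_i+\alpha/2)]$ with probability at least $1-\delta/k$, so a union bound over the $k$ arms shows that, with probability at least $1-\delta$, these inclusions hold simultaneously; call this event $\mathcal E$. On $\mathcal E$, let $\hatI=j$ be the output arm, so $\hat p_j \ge \hat p_{i^*}$ by the selection rule of Algorithm~\ref{alg:Simple}. Combining $\hat p_j \le p_j + U_j + \alpha/2$ with $\hat p_{i^*} \ge p_{i^*} - U_{i^*} - \alpha/2$ gives $p_j + U_j + \alpha/2 \ge p_{i^*} - U_{i^*} - \alpha/2$, that is, $\tilde\Delta_j = (p_{i^*}-U_{i^*})-(p_j+U_j) \le \alpha$. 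Hence $\hatI$ is $\alpha$-suboptimal, and $\Prob(\tilde\Delta_{\hatI}\le\alpha) \ge \Prob(\mathcal E) \ge 1-\delta$.

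There is no genuinely hard step; the result is essentially a union bound plus the defining property (ii) of the abstract \PIBAIspace problem. The only points needing care are bookkeeping ones: splitting the failure budget as $\delta/k$ per arm and the accuracy budget as $\alpha/2$, so that the worst-case arm $j$ picks up $\alpha/2$ of slack from each of the estimates $\hat p_j$ and $\hat p_{i^*}$, totalling exactly $\alpha$; and observing that although property (ii) is phrased for i.i.d.\ samples while the \PIBAIspace model permits the nuisance parameters $G_t$ to vary across a given arm's pulls, this is already subsumed in how property (ii) is stated (the estimator must tolerate arbitrary, possibly varying $G_t$), so no extra argument is required.
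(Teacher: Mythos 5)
Your proposal is correct and matches the paper's own proof: the paper likewise applies the property-(ii) estimator at accuracy $\alpha/2$ and confidence $\delta/k$, takes a union bound so that all $\hat p_i \in [p_i \plusminus (U_i + \tfrac{\alpha}{2})]$ with probability $1-\delta$, and then bounds $\tilde\Delta_{\hat I} \le (\hat p_{i^*}+\tfrac{\alpha}{2})-(\hat p_{\hat I}-\tfrac{\alpha}{2}) \le \alpha$ using the selection rule. The sample-complexity accounting is also identical.
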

\begin{proof}
By \eqref{eq:estimator} and a union bound, we have that with probability at least $1 - \delta$, all estimates $\hat{p}_i \in [p_i \plusminus (U_i + \tfrac{\alpha}{2})]$. Whenever this occurs,
\[
\tilde\Delta_{\hat{I}}
= 
(p_{i^*} - U_{i^*}) - (p_{\hat{I}} + U_{\hat{I}})
\leq
(\hat{p}_{i^*} + \tfrac{\alpha}{2}) - (\hat{p}_{\hat{I}} - \tfrac{\alpha}{2})
\leq \alpha,
\]
implying that $\hat{I}$ is $\alpha$-suboptimal. The sample complexity bound is clear.
\end{proof}

\subsection{Instance-adaptive Algorithms}\label{subsec:alg-instance-adaptive}
The sample complexity of the simple Algorithm~\ref{alg:Simple} is not adaptive to the difficulty of the actual instance: an arm is sampled $n_{\alpha/2, \delta/k}$ times even if it is far from $\alpha$-suboptimal (meaning that $\tilde{\Delta}_i \gg \alpha$) and could potentially be eliminated much more quickly. In this subsection, we obtain such an instance-adaptive sample complexity by modifying the Successive Elimination algorithm of \citep{even2006action}, which was originally designed for the classical multi-armed bandit problem.
\par Pseudocode is given in Algorithm~\ref{alg:SuccElim}. At each round $r$, Algorithm~\ref{alg:SuccElim} gets a single new sample from each remaining arm in order to update its estimate $\hat p_{i,r}$. Then, for the next round $r+1$, it only keeps the arms $i$ whose estimates $\hat p_{i,r}$ are $\alpha$-close to the best estimate, where the threshold $\alpha$ is updated at each round.
\begin{algorithm}[t]
$S \leftarrow [k]$, $r \leftarrow 1$
\\ \While{$|S| > 1$}{
Sample each arm $i \in S$ once and produce $\hat{p}_{i,r}$ from all $r$ past samples of it \\
$S \leftarrow \{i\in S\,:\, \hat p_{i,r} \geq \max_{j\in S}\hat p_{j,r} - 2\alpha_{r, 6\delta/(\pi^2 k r^2)} \}$\\
$r \leftarrow r+1$
}
Output the only arm left in $S$
\caption{
  Adaptation of Successive Elimination algorithm for $\PIBAI$. Here, $\alpha_{r,\delta} := \sqrt{\tfrac{c\logdel}{r}}$, where $c$ is a universal constant satisfying $n_{\alpha,\delta} \leq c\alpha^{-2}\logdel$ (see Equation~\ref{eq:estimator}).
}
\label{alg:SuccElim}
\end{algorithm}
The algorithm and analysis are almost identical to the original. As such, proof details are deferred to Appendix~\ref{app:algorithms}. The main difference is that in the proof of correctness, we show the event $\{|\hat p_{i,r}-p_i|\leq U_i+ \alpha_{r,6\delta/(\pi^2 k r^2)},\;  \forall r \in [R] ,\;\forall i\in S_r \}$ occurs with probability at least $1 - \delta$. This ensures that in each round $r$, each estimate $\hat{p}_{i,r}$ is accurate enough to use for the elimination step.

Note that Algorithm~\ref{alg:SuccElim} returns the optimal arm w.h.p. (without knowing the smallest effective gap), unlike Algorithm~\ref{alg:Simple} above which only returns a near-optimal arm w.h.p.

\begin{theorem}\label{Thm:SuccElim}
Let $\delta\in (0,1)$. With probability at least $1 - \delta$, Algorithm~\ref{alg:SuccElim} outputs the optimal arm after using at most $O\left(\sum_{i\neq i^*} \tfrac{1}{\tilde\Delta_i^2}\log\left(\frac{k}{\delta\tilde\Delta_i}\right)\right)$ samples.
\end{theorem}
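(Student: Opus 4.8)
### Proof proposal for Theorem~\ref{Thm:SuccElim}

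The plan is to mirror the classical analysis of Successive Elimination (as in~\citep{even2006action}), but with the error term $U_i + \alpha_{r,\delta'}$ playing the role that $\alpha_{r,\delta'}$ alone plays in the i.i.d.\ setting, and with the effective gap $\tilde\Delta_i$ replacing the usual gap $\Delta_i$. First I would define the ``good event''
\[
\mathcal{E} := \bigcap_{r \geq 1} \bigcap_{i \in S_r} \Big\{ |\hat{p}_{i,r} - p_i| \leq U_i + \alpha_{r, 6\delta/(\pi^2 k r^2)} \Big\},
\]
where $S_r$ is the set of surviving arms at the start of round $r$. Since arm $i$ has been sampled exactly $r$ times when $\hat{p}_{i,r}$ is formed, property~(ii) of the \PIBAIspace setup (Equation~\eqref{eq:estimator}) together with the definition $n_{\alpha,\delta} \leq c\alpha^{-2}\logdel$ gives that $\hat{p}_{i,r} \in [p_i \plusminus (U_i + \alpha_{r,\delta'})]$ fails with probability at most $\delta' = 6\delta/(\pi^2 k r^2)$. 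A union bound over all $i \in [k]$ and all $r \geq 1$ then bounds $\Prob(\mathcal{E}^c) \leq \sum_{r \geq 1} k \cdot 6\delta/(\pi^2 k r^2) = (6\delta/\pi^2)\sum_{r\geq 1} r^{-2} = \delta$, using $\sum_{r \geq 1} r^{-2} = \pi^2/6$.

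Next, I would argue correctness on $\mathcal{E}$: the optimal arm $i^*$ is never eliminated. Indeed, suppose $i^* \in S_r$ and let $j = \argmax_{\ell \in S_r}\hat p_{\ell, r}$. On $\mathcal{E}$, $\hat p_{i^*, r} \geq p_{i^*} - U_{i^*} - \alpha_{r,\delta_r'}$ and $\hat p_{j,r} \leq p_j + U_j + \alpha_{r,\delta_r'} \leq p_{i^*} + U_{i^*}$ wait --- more carefully, $p_j + U_j \leq p_{i^*} + U_{i^*}$ need not hold; instead I use $p_j - U_j \leq p_{i^*} - U_{i^*}$ is false too. The right inequality is: since $\tilde\Delta_j \geq 0$, we have $p_j + U_j \leq p_{i^*} - U_{i^*} \leq p_{i^*} + U_{i^*}$ is wrong. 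The correct bound: $\hat p_{j,r} - \hat p_{i^*,r} \leq (p_j + U_j + \alpha_{r,\delta_r'}) - (p_{i^*} - U_{i^*} - \alpha_{r,\delta_r'}) = -\tilde\Delta_j + 2\alpha_{r,\delta_r'} \leq 2\alpha_{r,\delta_r'}$, so $i^*$ passes the retention test. Hence on $\mathcal{E}$ the algorithm terminates with $S = \{i^*\}$.

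Finally, I would bound the number of samples on $\mathcal{E}$ by showing each suboptimal arm $i$ is eliminated by round $r_i := O(\tilde\Delta_i^{-2}\log(k/(\delta\tilde\Delta_i)))$. The elimination of $i$ is triggered once $\hat p_{i^*,r} - \hat p_{i,r} > 2\alpha_{r,\delta_r'}$; on $\mathcal{E}$, $\hat p_{i^*,r} - \hat p_{i,r} \geq (p_{i^*} - U_{i^*}) - (p_i + U_i) - 2\alpha_{r,\delta_r'} = \tilde\Delta_i - 2\alpha_{r,\delta_r'}$, so it suffices that $\tilde\Delta_i > 4\alpha_{r,\delta_r'} = 4\sqrt{c\log(\pi^2 k r^2/(6\delta))/r}$. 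Solving this for $r$ --- the one routine but slightly delicate computation, handled by the standard trick of noting $\log r = O(\log(1/\tilde\Delta_i))$ at the threshold and absorbing constants --- yields $r_i = O(\tilde\Delta_i^{-2}\log(k/(\delta\tilde\Delta_i)))$. Since an arm contributes one sample per round it survives, the total sample count on $\mathcal{E}$ is at most $\sum_{i \neq i^*} r_i + r_{\max}$ where $r_{\max} = \max_{i \neq i^*} r_i$ accounts for $i^*$ itself, giving the claimed $O(\sum_{i\neq i^*}\tilde\Delta_i^{-2}\log(k/(\delta\tilde\Delta_i)))$. The main obstacle is purely bookkeeping: carefully tracking that the per-round failure probability $\delta_r' = 6\delta/(\pi^2 k r^2)$ is summable and that the logarithmic factor from inverting $\tilde\Delta_i \lesssim \alpha_{r,\delta_r'}$ comes out as $\log(k/(\delta\tilde\Delta_i))$ rather than something larger; both are handled exactly as in the classical Successive Elimination analysis, with $U_i$ cleanly cancelling in the effective-gap definition.
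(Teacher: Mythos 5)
Your proposal is correct and follows essentially the same route as the paper's proof: the same good event with per-round failure probability $6\delta/(\pi^2 k r^2)$ summed via the Basel identity, the same retention argument showing $\hat p_{j,r}-\hat p_{i^*,r}\leq -\tilde\Delta_j+2\alpha_{r,\delta_r}\leq 2\alpha_{r,\delta_r}$ so that $i^*$ survives, and the same inversion of $\tilde\Delta_i>4\alpha_{r,\delta_r}$ to bound each suboptimal arm's elimination round (the paper additionally notes one should introduce virtual estimates for already-eliminated arms so the union bound over all $(i,r)$ is well-defined, and bounds the optimal arm's pulls via $T\leq 2\sum_{i\neq i^*}T_i$, both of which match your bookkeeping). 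The mid-proof self-correction lands on the right inequality, so no substantive changes are needed.
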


Moreover, as noted in Remark 9 of \citep{even2006action}, Algorithm \ref{alg:SuccElim} is easily modified (by simply terminating early) to be an ($\alpha, \delta$)-PAC algorithm with sample complexity \[
O\left(\frac{N_{\alpha}}{\alpha^2} \log\left(\frac{N_\alpha}{
\delta}\right) + \sum_{i\in [k]\,:\, \tilde\Delta_i>\alpha} \frac{1}{\tilde\Delta_i^2} \log\left(\frac{k}{\delta\tilde\Delta_i}\right)\right),
\]
where $N_\alpha$ is the number of $\alpha$-suboptimal arms with $\tilde\Delta_i \leq \alpha$.



\begin{remark}\label{rem:adapt}
In the multi-armed bandit literature, the sample complexity of the Successive Elimination algorithm (tight up to a logarithmic factor) was improved upon by~\citep{karnin2013almost}'s Exponential-Gap Elimination (EGE) algorithm (tight up to a doubly logarithmic factor). A natural idea is to analogously improve upon the \PIBAIspace guarantee in Theorem~\ref{Thm:SuccElim} by adapting the EGE algorithm. However, this approach does not work. The same holds for adapting the Median Elimination algorithm of~\citep{even2006action} and the PRISM algorithm of~\citep{jamieson2013finding}. The reason for the inadaptability of these algorithms -- in contrast to the easy adaptability of the Successive Elimination algorithm above -- is that these algorithms heavily rely on the ``additive property of suboptimality'' for \BAI:
\begin{itemize}
\item[] ``If arm $i$ has $\Delta_i$ suboptimality gap w.r.t. the optimal arm $i^*$, and if arm $j$ has $\Delta_j^{(i)}$ suboptimality gap w.r.t. arm $i$, then arm $j$ has suboptimality gap $\Delta_j = \Delta_i + \Delta_j^{(i)}$ w.r.t. the optimal arm $i^*$.''
\end{itemize}
The critical point is that \PIBAIspace does not have this property since errors propagate from adding the uncertainties $U_i$ in the suboptimality gaps: that is, $\DS \left(p_{i^*}-U_{i^*}\right)-\left(p_{j}+U_j\right)\neq \left[\left(p_{i^*}-U_{i^*}\right)-\left(p_{i}+U_i\right)\right]+\left[\left(p_{i}-U_{i}\right)-\left(p_{j}+U_j\right)\right]$. Because of this, in order to return an $\alpha$-suboptimal arm for \PIBAI, we must ensure that \textit{all arms that are more than $\alpha$-suboptimal are eliminated before the optimal arm is eliminated} (if it ever is). This is in contrast to \BAIspace in the classical multi-armed bandit setup: there, it is not problematic if the best (or even currently best) arm is eliminated at round $r$, so long as the best arm in consecutive rounds $r$ and $r+1$ changes at most by a small amount.\footnote{In particular, at most $2^{-r}\alpha$ for most of these aforementioned algorithms, since this implies that the final arm $\hat{I}$ is at most $\sum_{r=1}^{\infty} 2^{-r}\alpha = \alpha$-suboptimal.} We stress that this nuance is not merely a technicality but actually fundamental to the correctness proofs for \PIBAI.
\end{remark}


\section{Finding the Best Median}\label{sec:median}
The previous section provided algorithms for the general \PIBAIspace problem; in this section, these algorithms are adapted to the special case of the \CBAIspace problem, which is the focus of the paper. This requires an estimator of the median from contaminated samples that concentrates at an exponential rate\footnote{For the formal statement, see (ii) in the definition of \PIBAIspace in Section~\ref{sec:pibai}.}. Subsection~\ref{subsec:contaminated.estimation} develops these statistical results, which are of potential independent interest to the robust statistics community. Subsection~\ref{subsec:cbai-pac} then combines these results with the results of the previous section to conclude algorithms for \CBAI, and gives upper bounds on their sample complexity. Finally, Subsection~\ref{subsec:lb} gives information-theoretic lower bounds showing that the sample complexities of these algorithms are optimal (up to a small logarithmic factor).

\subsection{Estimation of Median from Contaminated Samples}\label{subsec:contaminated.estimation}
In this subsection, we develop the statistical results needed to obtain algorithms for the \CBAIspace problem. Specifically, we obtain tight, non-asymptotic sample-complexity bounds for median estimation from contaminated samples. We do this for all three adversarial models for the contamination (oblivious, prescient, and malicious). These results may be of independent interest to the robust statistics community.
\par The subsection is organized as follows. First, Subsection~\ref{subsubsec:med-finite} studies the concentration of the empirical median under the most general distributional assumptions and provides, for all three adversarial settings, tight upper and lower bounds on the sample complexity of median estimation from contaminated samples. Next, in Subsection~\ref{subsubsec:famnosig.estimation} we obtain more algorithmically useful guarantees by specializing these results to a family of distributions where the cdfs increase at least linearly in a neighborhood of the median, a very common assumption in the robust estimation literature. For this family of distributions, we explicitly compute for all three adversarial settings the unavoidable bias terms mentioned in Section~\ref{sec:cbai} for median estimation from contaminated samples. It is worth emphasizing that these results precisely quantify the effect of the three different adversarial strengths on the complexity of the problem of median estimation in the contamination model.
\par Throughout this subsection, we will only consider a single arm, and its true distribution will be denoted by $F$. For brevity of the main text, all proofs are deferred to Appendix~\ref{app:est}.

\subsubsection{General Concentration Results for Median Estimation}\label{subsubsec:med-finite}


Lemma~\ref{lem:change-med} implies that some control of the quantiles of $F$ is necessary to obtain guarantees for estimation of $\med(F)$ from contaminated samples. We begin by considering $F$ in the following family of distributions, which we stress makes the most general such assumption.

\begin{definition}\label{def:fam-quantile}
For any $\tmax \in (0, \half)$ and any non-decreasing 
function $R : [0, \tmax] \to \Real_{\geq 0}$, define $\Famquantile$ to be the family of all distributions $F$ satisfying 
\begin{align}
R(t) \geq 
\max\left\{
\QRF\left(\frac{1}{2} + t \right) - m, 
m - \QLF\left(\frac{1}{2} - t \right)
\right\}
\label{eq:def-R}
\end{align}
for any $t \in [0, \tmax]$ and any median $m \in m_1(F)$.
\end{definition}
In words, the function $R$ dictates the the maximal deviation from the median that a cdf $F$ can have for all quantiles in a small neighborhood $[\half - \bar{t}, \half + \bar{t}]$ around the median. Since $R$ is an arbitrary non-decreasing function, Definition~\ref{def:fam-quantile} gives the most general possible bound on these quantiles.

\par Note that if $\eps > \epsmax(\tmax) := \frac{2\tmax}{1+2\tmax}$, it is impossible to control the deviation of the contaminated median from the true median for $F \in  \Famquantile$. Indeed, the requirement $\eps \leq \epsmax(\tmax) := \frac{2\tmax}{1+2\tmax}$ is equivalent to $\tmax \geq \frac{\eps}{2(1-\eps)}$, which, by Lemma~\ref{lem:change-med}, is the largest possible deviation from the $\half$-quantile.

Combining Lemma~\ref{lem:change-med} with the definition of $R$ in Definition~\ref{def:fam-quantile} immediately yields the following tight bound on how far the contaminated median can be moved from the true median for any $F\in\Famquantile$.
\begin{corollary}\label{corol:change-med-R}
For any $\tmax \in (0, \half)$, any $\eps \in (0, \epsmax(\tmax))$, and any non-decreasing function $R : [0, \tmax] \to \Real_{\geq 0}$,
\[
\sup_{\substack{F \in \Famquantile,\, m \in \med(F),\\
\mathrm{distribution}\; G, \, \tilde{m} \in \med((1 - \eps) F + \eps G)}}
\left|\tilde{m} - m \right|
= R\left(
\frac{\eps}{2(1-\eps)}
\right)
\]
\end{corollary}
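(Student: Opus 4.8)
The plan is to show that the supremum in the corollary is both at most and at least $R(t^*)$, where I abbreviate $t^* := \tfrac{\eps}{2(1-\eps)}$. Two preliminary facts drive everything: the arithmetic identities $\tfrac{1}{2(1-\eps)} = \half + t^*$ and $\tfrac{1-2\eps}{2(1-\eps)} = \half - t^*$, and the observation (made just before the corollary) that $\eps \in (0, \epsmax(\tmax))$ is equivalent to $t^* \in (0, \tmax)$; in particular $t^* < \tmax < \half$, so $R(t^*)$ is well defined and $\half - t^* > 0$, a fact I will need in the construction below.

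For the ``$\le$'' direction, fix any $F \in \Famquantile$ and any median $m \in \med(F)$ and apply Lemma~\ref{lem:change-med}: after substituting the identities above, it states that $\sup_{G,\, \tilde m \in \med((1-\eps)F + \eps G)} |\tilde m - m| = \max\{\QRF(\half + t^*) - m,\ m - \QLF(\half - t^*)\}$. Since $t^* \in [0, \tmax]$ and $m \in \med(F)$, the defining inequality~\eqref{eq:def-R} of $\Famquantile$ bounds this right-hand side by $R(t^*)$. Taking the supremum over all such $F$ and $m$ yields the upper bound.

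For the ``$\ge$'' direction, it suffices to produce a single $F \in \Famquantile$ whose quantiles meet~\eqref{eq:def-R} with equality at $t = t^*$, since then Lemma~\ref{lem:change-med} forces $\sup_{G, \tilde m} |\tilde m - m| = R(t^*)$ for that $F$. I would take the three-atom distribution $F := (\half - t^*)\,\delta_{-R(t^*)} + 2t^*\,\delta_0 + (\half - t^*)\,\delta_{R(t^*)}$, which is a valid probability distribution because $0 < t^* < \half$. A short direct check shows that $0$ is the unique median of $F$, that $\QRF(\half + t^*) = R(t^*)$ and $\QLF(\half - t^*) = -R(t^*)$, and---the one place where monotonicity of $R$ is used---that $F$ satisfies~\eqref{eq:def-R} for \emph{every} $t \in [0, \tmax]$, since for $t < t^*$ the relevant quantiles sit at $0$ while for $t \ge t^*$ they sit at $\pm R(t^*)$ with $R(t^*) \le R(t)$. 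Then Lemma~\ref{lem:change-med} gives $\sup_{G, \tilde m} |\tilde m - 0| = \max\{R(t^*), R(t^*)\} = R(t^*)$, completing the lower bound. The only step requiring care is this last verification that $F \in \Famquantile$ at \emph{all} $t$ and not merely at $t^*$; it is a routine split into the cases $t < t^*$, $t = t^*$, $t > t^*$, and the degenerate subcase $R(t^*) = 0$ (where $F = \delta_0$) is handled by the same argument with both sides equal to $0$.
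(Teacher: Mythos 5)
Your proof is correct and follows essentially the same route as the paper, which obtains this corollary by ``immediately'' combining Lemma~\ref{lem:change-med} with the defining inequality~\eqref{eq:def-R} of $\Famquantile$ evaluated at $t = \tfrac{\eps}{2(1-\eps)}$ (using that $\eps < \epsmax(\tmax)$ is equivalent to this $t$ lying in $[0,\tmax]$), exactly as in your upper-bound step. The paper leaves the attainment (``$\geq$'') direction implicit, so your explicit three-atom witness and the case check that it belongs to $\Famquantile$ for all $t\in[0,\tmax]$ is added rigor rather than a departure.
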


We now turn to estimation results. At this point it is necessary to distinguish between the three adversarial settings for the contamination. We begin with guarantees for the oblivious and prescient adversarial settings, which -- somewhat surprisingly -- turn out to have the same rates. In particular, we show that with probability at least $1 - \delta$, the estimation error is bounded above by $R(\tfrac{\eps}{2(1 - \eps)} + O(\sqrt{\tfrac{\log 1/\delta}{n}}))$. Note that if $R$ is Lipschitz, then this quantity is bounded above by the unavoidable uncertainty term $R\left(\tfrac{\eps}{2(1-\eps)}\right)$ (see Corollary~\ref{corol:change-med-R}) plus an error term that decays quickly with $n^{-1/2}$ rate and has sub-Gaussian tails in $\delta$. This confidence-interval term is of optimal order since it is tight even for estimating the mean (and thus median) of a Gaussian random variable with known unit variance from \emph{uncontaminated} samples.

\begin{lemma}\label{lem:est-med-R}
Let $\tmax \in (0, \half)$, $\eps \in (0, \epsmax(\tmax))$, and $F \in \Famquantile$. Let $Y_i \sim F$ and $D_i \sim \Ber(\eps) $, for $i \in [n]$, all be drawn independently. Let $\{Z_i\}_{i \in [n]}$ be arbitrary random variables possibly depending on $\{Y_i, D_i\}_{i \in [n]}$, and define $X_i = (1 - D_i)Y_i + D_iZ_i$. Then, for any confidence level $\delta \in (0,1)$ and number of samples
$n \geq 2 \left(\tmax - \tfrac{\eps}{2(1-\eps)}\right)^{-2} \logtdel$
, we have 
\[
\Prob\left(
\sup_{m \in \med(F)}
\left| \hatmed(X_1, \dots, X_n) - m \right| \leq
R\left(\frac{\eps}{2(1 - \eps)} + \sqrt{\frac{2\log (2/\delta)}{n}} \right)
\right)
\geq 1 - \delta.
\]
\end{lemma}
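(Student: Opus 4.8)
The plan is to reduce the two-sided bound to a single one-sided tail estimate on the empirical median, and then control that tail by a Chernoff/Hoeffding bound on a binomial that simply discards the contaminated coordinates. Abbreviate $t^\star := \tfrac{\eps}{2(1-\eps)} + \sqrt{2\log(2/\delta)/n}$ and $a := R(t^\star)$. The hypothesis $n \geq 2\big(\tmax - \tfrac{\eps}{2(1-\eps)}\big)^{-2}\logtdel$ is exactly what makes $\sqrt{2\log(2/\delta)/n} \leq \tmax - \tfrac{\eps}{2(1-\eps)}$, hence $t^\star \le \tmax$, so that $R(t^\star)$ is well defined (here one also uses $\eps < \epsmax(\tmax)$, equivalently $\tfrac{\eps}{2(1-\eps)} < \tmax$, so that the quantity being squared is positive). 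By Definition~\ref{def:fam-quantile} applied with the two extreme medians $\QLF(\half)$ and $\QRF(\half)$, we get $\QRF(\half + t^\star) \le \QLF(\half) + a$ and $\QLF(\half - t^\star) \ge \QRF(\half) - a$. Since $\med(F) = [\QLF(\half), \QRF(\half)]$, it therefore suffices to prove
\[
\Prob\!\left(\hatmed(X_1,\dots,X_n) > \QLF(\half) + a\right) \le \halfdel
\qquad\text{and}\qquad
\Prob\!\left(\hatmed(X_1,\dots,X_n) < \QRF(\half) - a\right) \le \halfdel ,
\]
and then union bound: on the complement of both events, $|\hatmed - m| \le a$ for every $m \in \med(F)$.

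I will carry out the right-tail bound; the left is symmetric. The key elementary observation is that $\hatmed(X_1,\dots,X_n)$ never exceeds the $(\lfloor n/2\rfloor + 1)$-st order statistic of the sample (for odd $n$ it equals it; for even $n$ it is the average of the $\tfrac n2$-th and the $(\tfrac n2+1)$-st), so if strictly more than $n/2$ of the $X_i$ are $\le c$ then $\hatmed \le c$. Contrapositively, with $c := \QLF(\half) + a$,
\[
\left\{\hatmed(X_1,\dots,X_n) > c\right\} \subseteq \left\{ \#\{i \in [n] : X_i \le c\} \le \tfrac n2 \right\}.
\]
Now lower-bound that count stochastically. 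Since $X_i = (1-D_i)Y_i + D_iZ_i$, we have $\mathbf{1}[X_i \le c] \ge \mathbf{1}[D_i = 0]\,\mathbf{1}[Y_i \le c]$, and because the pairs $(Y_i, D_i)$ are independent across $i$ with $Y_i \perp D_i$, the variables $\mathbf{1}[D_i = 0, Y_i \le c]$ are i.i.d.\ $\Ber\!\big((1-\eps)F(c)\big)$ --- the arbitrary contaminations $Z_i$ drop out entirely, because we simply throw the contaminated coordinates away. Using $c \ge \QRF(\half + t^\star)$ together with right-continuity of $F$, $F(c) \ge \half + t^\star$, so $\#\{i : X_i \le c\}$ stochastically dominates a $\Bin\!\big(n, (1-\eps)(\half + t^\star)\big)$ variable, whose mean equals $n(1-\eps)\big(\tfrac{1}{2(1-\eps)} + \sqrt{2\log(2/\delta)/n}\big) = \tfrac n2 + n(1-\eps)\sqrt{2\log(2/\delta)/n}$ by the choice of $t^\star$.

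Finally, Hoeffding's inequality bounds the probability that a $\Bin(n,q)$ variable lies at least $s$ below its mean by $e^{-2s^2/n}$; with $s = n(1-\eps)\sqrt{2\log(2/\delta)/n}$ this equals $\exp\!\big(-4(1-\eps)^2\log(2/\delta)\big) = (\delta/2)^{4(1-\eps)^2} \le \halfdel$, the last step because $\eps < \half$ forces $4(1-\eps)^2 > 1$. This gives the right-tail bound, and the left-tail bound follows identically after replacing ``$\le$'' by ``$\ge$'', $\QLF$ by $\QRF$, the use of right-continuity of $F$ by the defining inequality of the left quantile $\QLF(\half - t^\star) = \inf\{x : F(x) \ge \half - t^\star\}$ (which yields $\Prob(Y_i \ge \QRF(\half) - a) \ge \half + t^\star$), and the order-statistic observation by its mirror image. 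A union bound over the two tails completes the proof. I expect the only real care to be bookkeeping: keeping the quantile conventions straight ($\QRF$ paired with right-continuity of $F$ in the upper tail, $\QLF$ paired with the left limit $F(x^-)$ in the lower tail) and checking the ``$\hatmed \le (\lfloor n/2\rfloor+1)$-st order statistic'' fact separately for even $n$; the decoupling of the $Z_i$, the reduction to a binomial, and the Chernoff step are routine, and the sample-size hypothesis is used in exactly one place, namely to ensure $t^\star \le \tmax$.
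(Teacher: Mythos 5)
Your proof is correct and follows essentially the same route as the paper's: both discard the contaminated coordinates in the pessimistic direction, reduce each tail of the empirical median to a binomial count crossing $n/2$, and apply Hoeffding plus a union bound (your ``good-sample'' indicator $\mathds{1}\{D_i=0,\,Y_i\le c\}$ is just the complement of the paper's $L_i$). The only cosmetic difference is that you carry the full deviation $\sqrt{2\log(2/\delta)/n}$ through the Chernoff step, yielding the exponent $4(1-\eps)^2\log(2/\delta)\ge\log(2/\delta)$, whereas the paper uses the slightly smaller $a=\sqrt{\log(2/\delta)}/\bigl((1-\eps)\sqrt{2n}\bigr)$ so that the tail bound is exactly $\delta/2$.
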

Note that because the class $\Famquantile$ only assumes control on the $[\half \plusminus \tmax]$ quantiles, the minimum sample complexity $n$ must grow as $\tmax$ approaches $\frac{\varepsilon}{2(1-\varepsilon)}$, since by Lemma~\ref{lem:change-med} this is the largest quantile deviation that the contaminated median can be moved from the true median.

We now turn to malicious adversaries and derive analogous tight, non-asymptotic sample complexity bounds for median estimation from contaminated samples. We show that it possible to obtain estimation accuracy of $R(\eps)$ in this malicious adversarial setting (Lemma~\ref{lem:est-med-malicious-R}). Note that this is weaker than the the accuracy of $R(\tfrac{\eps}{2(1 - \eps)})$ obtained above against oblivious and prescient adversaries. However, we also show that this dependency is tight and unavoidable (Lemma~\ref{lem:est-med-malicious-R-tight}). Moreover, the $O(\sqrt{\tfrac{\log 1/\delta}{n}})$ error term in Lemma~\ref{lem:est-med-malicious-R} is tight for the same reason as it was in Lemma~\ref{lem:est-med-R}; see the discussion there. Finally, we remark that the upper bound on $\eps$ and the lower bound on the sample complexity $n$ in Lemma~\ref{lem:est-med-malicious-R} are exactly the analogues of the corresponding bounds in Lemma~\ref{lem:est-med-R}; the only difference is that malicious adversaries can force the contaminated distributions to have medians at roughly $F^{-1}(\half \plusminus \eps)$, resulting in our need for control of these further quantiles.

\begin{lemma}\label{lem:est-med-malicious-R}
Let $\tmax \in (0, \half)$, $\eps \in (0, \tmax)$, and $F \in \Famquantile$. Let $(Y_i,D_i)$, for $i \in [n]$, be drawn independently with marginals $Y_i \sim F$ and $D_i \sim \Ber(\eps)$. Let $\{Z_i\}_{i \in [n]}$ be arbitrary random variables possibly depending on $\{Y_i, D_i\}_{i \in [n]}$, and define  $X_i = (1 - D_i)Y_i + D_iZ_i$. Then for any confidence level $\delta \in (0,1)$ and number of samples
$n \geq  2 \left(\tmax -\eps\right)^{-2} \log\tfrac{3}{\delta}$,
\[
\Prob\left(
\sup_{m \in \med(F)}
\left| \hatmed(X_1, \dots, X_n) - m \right| \leq
R\left(\eps  + \sqrt{\frac{2\log(3/\delta)}{n}} \right)
\right)
\geq 1 - \delta.
\]
\end{lemma}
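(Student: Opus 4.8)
The plan is to pass from the two-sided event on $\sup_{m\in\med(F)}\left|\hatmed(X_1,\dots,X_n)-m\right|$ to two one-sided tail events for the empirical median, and then to control each tail by a Hoeffding bound applied to a carefully chosen count of ``clean'' samples. Write $\gamma := \eps + \sqrt{2\log(3/\delta)/n}$. First I would use that $\med(F)=[\QLF(\half),\QRF(\half)]$ is an interval, so $\sup_{m\in\med(F)}|\hatmed-m|=\max\{|\hatmed-\QLF(\half)|,\,|\hatmed-\QRF(\half)|\}$; hence the event in question is contained in $\{\hatmed>\QLF(\half)+R(\gamma)\}\cup\{\hatmed<\QRF(\half)-R(\gamma)\}$. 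Since the sample-size hypothesis $n\ge 2(\tmax-\eps)^{-2}\log\tfrac{3}{\delta}$ is exactly equivalent to $\sqrt{2\log(3/\delta)/n}\le\tmax-\eps$, i.e.\ to $\gamma\in[0,\tmax]$, I may apply Definition~\ref{def:fam-quantile} at $t=\gamma$ with $m=\QLF(\half)$ and with $m=\QRF(\half)$; this upgrades the two events above to $\{\hatmed>\QRF(\half+\gamma)\}$ and $\{\hatmed<\QLF(\half-\gamma)\}$, respectively. It therefore suffices to bound each of these two probabilities by $\tfrac{\delta}{2}$.

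By the reflection $X_i\mapsto -X_i$, $Y_i\mapsto -Y_i$, $Z_i\mapsto -Z_i$ (which preserves all the hypotheses, after reflecting $F$, and interchanges left/right quantiles), it is enough to bound $\Prob(\hatmed(X_1,\dots,X_n)>a)$ with $a:=\QRF(\half+\gamma)$. An elementary order-statistics fact gives $\{\hatmed>a\}\subseteq\{\#\{i:X_i\le a\}\le n/2\}$, for both odd and even $n$. The step specific to the \emph{malicious} adversary is the following: because $X_i=(1-D_i)Y_i+D_iZ_i$, we always have the pointwise bound $\mathbf 1\{X_i\le a\}\ge B_i$, where $B_i:=(1-D_i)\mathbf 1\{Y_i\le a\}$, and $B_i$ is a function of $(Y_i,D_i)$ \emph{only} --- it does not involve the fully adversarial $Z_i$. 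Hence the $B_i$ are independent $\{0,1\}$-valued random variables (the pairs $(Y_i,D_i)$ being independent across $i$), \emph{even though $Y_i$ and $D_i$ may be arbitrarily coupled within each pair}. Setting $W:=\sum_{i=1}^n B_i\le \#\{i:X_i\le a\}$, we obtain $\Prob(\hatmed>a)\le\Prob(W\le n/2)$.

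It remains to lower bound $\E[W]$ and apply Hoeffding. Here $\E[B_i]=\Prob(D_i=0,\,Y_i\le a)\ge \Prob(Y_i\le a)-\Prob(D_i=1)=F(a)-\eps$, and this inequality --- valid for any coupling of $Y_i$ and $D_i$, which is precisely what the malicious setting allows --- is the source of the bias $R(\eps)$ here, as opposed to $R(\tfrac{\eps}{2(1-\eps)})$ in Lemma~\ref{lem:est-med-R}. Since $a=\QRF(\half+\gamma)$ and $F$ is right-continuous, $F(a)\ge\half+\gamma$, so $\E[W]\ge n(\half+\gamma-\eps)=\tfrac{n}{2}+\sqrt{2n\log(3/\delta)}$. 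Then $\Prob(W\le n/2)\le\Prob\big(W-\E[W]\le-\sqrt{2n\log(3/\delta)}\big)\le\exp(-4\log\tfrac{3}{\delta})\le\tfrac{\delta}{2}$ by Hoeffding's inequality for sums of independent $[0,1]$-bounded variables, and a union bound over the two tails finishes the proof. I expect the only real obstacle to be isolating the correct statistic to concentrate: it must drop the adversarial $Z_i$ to keep independence across $i$, yet keep enough mass that its mean exceeds $\tfrac12$; the bound $\E[B_i]\ge F(a)-\eps$ is exactly what reconciles these two requirements under the malicious coupling, and it is also what forces the unavoidable bias to be $R(\eps)$ rather than the smaller quantity appearing in the oblivious/prescient case.
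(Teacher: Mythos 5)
Your proof is correct, and it takes a genuinely different route from the paper's. The paper's argument for the malicious case has three pieces: a Hoeffding bound showing $\sum_i D_i \le (\eps+a)n$ w.h.p., a deterministic order-statistics sandwich (Lemma~\ref{lem:med-contaminate-k}) placing $\hatmed(X_1,\dots,X_n)$ between $Y_{(\lfloor n/2\rfloor - \lfloor(\eps+a)n\rfloor)}$ and $Y_{(\lceil n/2\rceil + \lfloor(\eps+a)n\rfloor)}$, and a second Hoeffding bound locating those order statistics of the clean samples near the $\half\plusminus(\eps+2a)$ quantiles; this costs three events of probability $\delta/3$ each. You instead concentrate, per tail, the single sum $W=\sum_i B_i$ with $B_i=(1-D_i)\mathds{1}\{Y_i\le a\}$, which correctly drops the adversarial $Z_i$, remains independent across $i$ because the \emph{pairs} $(Y_i,D_i)$ are independent, and has mean at least $F(a)-\eps$ by the union bound $\Prob(D_i=0,\,Y_i\le a)\ge \Prob(Y_i\le a)-\Prob(D_i=1)$ --- valid under arbitrary within-pair coupling, which is exactly where the bias degrades from $\tfrac{\eps}{2(1-\eps)}$ to $\eps$. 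Your route is structurally the natural extension of the paper's proof of Lemma~\ref{lem:est-med-R} to the malicious setting: it avoids the auxiliary order-statistics lemma entirely, needs only two failure events (so $\log\tfrac{2}{\delta}$ would in fact suffice where the statement allows $\log\tfrac{3}{\delta}$, and your Hoeffding exponent $-4\log\tfrac{3}{\delta}$ has additional slack), and all the boundary details you rely on check out ($F(\QRF(p))\ge p$ by right-continuity, the inclusion $\{\hatmed>a\}\subseteq\{\#\{i:X_i\le a\}\le n/2\}$ for both parities of $n$, and $\gamma\le\tmax$ from the sample-size hypothesis). What the paper's decomposition buys in exchange is an explicit handle on the random count of contaminated samples and on which clean order statistics bracket the empirical median, a picture that is mirrored in the matching lower-bound construction of Lemma~\ref{lem:est-med-malicious-R-tight}.
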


\begin{lemma}\label{lem:est-med-malicious-R-tight}
Let $\tmax \in (0, \half)$, $\eps \in (0, \tmax)$, $\delta \in (0,1)$, $n \geq \half\eps^{-2}\logdel$, and $R : [0, \tmax] \to \Real_{\geq 0}$ be any strictly increasing function. Then there exists a distribution $F \in \Famquantile$ and a joint distribution on $(D, Y, Z)$ with marginals $D \sim \Ber(\eps)$ and $Y \sim F$, such that $F$ has unique median and
\[
\Prob\left(
\left| \hatmed(X_1, \dots, X_n) - \med(F) \right| \geq
R\left(\eps  - \sqrt{\frac{\log(1/\delta)}{2n}} \right)
\right)
\geq 1 - \delta,
\]
where each $\{(D_i, Y_i, Z_i)\}_{i \in [n]}$ is drawn independently from the joint distribution, and $X_i := (1 - D_i)Y_i + D_i Z_i$.
\end{lemma}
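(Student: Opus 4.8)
The plan is to exhibit a single ``extremal'' distribution $F$ together with a malicious coupling that deletes the $\eps$-mass of $F$ lying just below its median and dumps it far to the right; the empirical median is then pushed up to (essentially) the $(\half+\eps)$-quantile of $F$, and a Hoeffding bound controls the remaining fluctuation. \textbf{Construction.} Since $R$ is strictly increasing, let $F$ be any cdf that is continuous and strictly increasing on $F^{-1}([\half-\tmax,\half+\tmax])$ and satisfies $\QLF(\half\pm t)=\QRF(\half\pm t)=m\pm R(t)$ for all $t\in[0,\tmax]$, extended arbitrarily to a valid cdf outside this probability window. Then $m$ is the unique median of $F$, and \eqref{eq:def-R} holds with equality, so $F\in\Famquantile$. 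Because $\eps<\tmax<\half$, the set $A:=(\QLF(\half-\eps),\QLF(\half))$ has $F$-mass exactly $\eps$. I take the joint law of $(D,Y,Z)$ to be $Y\sim F$, $D:=\mathbf{1}\{Y\in A\}$ (which has the required marginal $\Ber(\eps)$ — this is precisely where the malicious coupling of $D$ with $Y$ is used), and $Z\equiv C$ for a constant $C>m+R(\tmax)$. Drawing $n$ i.i.d. copies, $X_i=Y_i$ if $Y_i\notin A$ and $X_i=C$ otherwise.

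\textbf{Core estimate.} Set $s:=\eps-\sqrt{\log(1/\delta)/(2n)}$; the hypothesis $n\ge\half\eps^{-2}\logdel$ gives $s\in[0,\eps]\subseteq[0,\tmax]$, so $R(s)$ is well-defined and $\QRF(\half+s)=m+R(s)<C$. Every contaminated sample equals $C\ge\QRF(\half+s)$, so $N:=\#\{i:X_i<\QRF(\half+s)\}$ counts exactly the $i$ with $Y_i\notin A$ and $Y_i<\QRF(\half+s)$; since $A\subseteq(-\infty,\QRF(\half+s))$ with $\Prob(Y\in A)=\eps$ and $\Prob(Y<\QRF(\half+s))=\half+s$, we get $N\sim\Bin(n,\half+s-\eps)=\Bin(n,\half-\sqrt{\log(1/\delta)/(2n)})$. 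On the event $\{N<n/2\}$, at least $\lceil n/2\rceil$ of the $X_i$ are $\ge\QRF(\half+s)$; checking the odd case (then $N\le(n-1)/2$) and the even case (then $N\le n/2-1$) separately shows this forces $\hatmed(X_1,\dots,X_n)\ge\QRF(\half+s)=m+R(s)$, hence $|\hatmed-\med(F)|\ge R(s)=R(\eps-\sqrt{\log(1/\delta)/(2n)})$. Finally Hoeffding's inequality gives $\Prob(N\ge n/2)\le\exp(-2(\sqrt{n\log(1/\delta)/2})^2/n)=\delta$, so $\{N<n/2\}$, and thus the claimed inequality, holds with probability at least $1-\delta$.

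\textbf{Main obstacle.} The calculation is short; the only points requiring care are (a) choosing the cut so that the $\Bin$ parameter is exactly $\half-\sqrt{\log(1/\delta)/(2n)}$, so that Hoeffding returns precisely $\delta$ (this is what pins down the shape $R(\eps-\sqrt{\log(1/\delta)/(2n)})$ in the statement), and (b) the parity bookkeeping that converts ``$N<n/2$'' into ``$\hatmed\ge\QRF(\half+s)$''. I would also flag the harmless edge case $R(0)>0$: to keep the constructed $F$ with a \emph{unique} median one should build $F$ from a continuous nondecreasing function that agrees with $R$ at the single point $s$ and lies weakly below $R$ on $[0,\tmax]$, which suffices for both $F\in\Famquantile$ and $\QRF(\half+s)-m=R(s)$. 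Everything else is routine.
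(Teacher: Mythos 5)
Your proof is correct and takes essentially the same route as the paper's: both construct an extremal $F\in\Famquantile$ whose quantiles saturate \eqref{eq:def-R}, maliciously couple $D$ to the event that $Y$ lands in the lower half near the median so that an $\eps$-mass just below $\med(F)$ is relocated far to the right, and then apply Hoeffding to a binomial count calibrated so the deviation term is exactly $\sqrt{\log(1/\delta)/(2n)}$. The only differences are cosmetic — you use a deterministic coupling $D=\mathds{1}\{Y\in A\}$ on a mass-$\eps$ sliver where the paper uses the randomized conditional $\Ber(2\eps\cdot\mathds{1}\{Y\leq 0\})$, and you are somewhat more explicit about the parity bookkeeping and the unique-median edge case when $R(0)>0$.
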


\subsubsection{Improved Concentration under Quantile Control}\label{subsubsec:famnosig.estimation}
Note that if a cdf $F$ changes little around its median, then the neighboring quantiles are harder to distinguish from each other, and thus it is more difficult to estimate the median. In order to obtain more algorithmically useful rates for median estimation from contaminated samples, we now introduce a more specific class of cdfs $F$ that increase at least linearly in a neighborhood around the median. This ensures that $F$ is not ``too flat'' in this neighborhood, which we stress is a very standard assumption for median estimation.

\begin{definition}\label{def:2-param}
For any $\tmax \in (0, \half)$ and $B > 0$, let $\Famnosig$ be the family of distributions $F$ that have a unique median and satisfy
\begin{align}
|F(x_1) - F(x_2)| \geq \frac{1}{B\sigmed(F)}|x_1 - x_2|
\label{eq:def-Fam}
\end{align}
for all $x_1, x_2 \in  \left[\QLF(\half - \tmax), \QRF(\half + \tmax)\right]$.
\end{definition}

We make a few remarks about the definition. i) Requiring the right-hand side of~\eqref{eq:def-Fam} to scale inversely in the median absolute deviation (MAD) $m_2(F)$ ensures closure of $\Famnosig$ under scaling; see below. We also mention that $m_2(F)$ is a robust measure of the spread of $F$ (it is the ``median moment'' analogue of variance), and controls the width of the median's unidentifiability region. ii) If $F \in \Famnosig$, then $F \in \Famquantile$ for $R(t) = Bm_2(F)t$. iii) Distributions in $\Famnosig$ are not required to have densities, nor even be continuous. iv) The family $\Famnosig$ has many natural and expected properties, such as closure under scaling and translation. These properties are gathered in Lemma~\ref{lem:fam-properties}, which is deferred to Appendix~\ref{app:prop-fam} for brevity of the main text.

\begin{remark} [Examples]
  Most common distributions belong to $\Famnosig$ for some values of the parameters $\tmax$ and $B$. Moreover, by Lemma \ref{lem:fam-properties}, if a distribution is in $\Famnosig$, then all scaled and translated versions are as well. A short list includes: i) the Gaussian distribution, for any $\tmax \in (0, \half)$ and any $\DS B\geq \frac{q_{3/4}}{\phi(q_{1/2+\bar t})}$, where $\phi$ is the standard Gaussian density and $q_\alpha$ is the corresponding $\alpha$-quantile; ii) the uniform distribution on any interval, for any $\bar t\in (0,\half)$ and any $B\geq4$; and iii)  any distribution $F$ with positive density $F'$, for any $\tmax \in (0,\half)$ and any $B\geq \left(m_2(F)  \min_{x\in [\QLF(\half - \tmax), \QRF(\half + \tmax)]}F'(x)\right)^{-1}$.
\end{remark}

We now apply the estimation results for $\Famquantile$ to estimation for $\Famnosig$. By Lemma~\ref{lem:fam-properties}, whenever $F \in \Famnosig$, then $F \in \Famquantile$ for
$R(t) = t Bm_2(F)$. Corollary~\ref{corol:change-med-R} immediately yields a bound on the size of the contamination region in terms of the quantity 
\[
\Uncertaintygen := Bm_2\frac{\eps}{2(1-\eps)}.
\]
\begin{corollary}\label{corol:change-med}
For any $\tmax \in (0, \half)$, any $\eps \in (0, \epsmax(\tmax))$, and any $F \in \Famnosig$,
\[
\sup_{\substack{\mathrm{distribution}\; G, \\ \tilde{m} \in \med((1 - \eps) F + \eps G)}}
\left|\tilde{m} - \med(F) \right|
\leq
\UncertaintyF.
\]
\end{corollary}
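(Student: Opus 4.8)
The plan is to obtain Corollary~\ref{corol:change-med} as a one-line specialization of Corollary~\ref{corol:change-med-R}, after embedding the class $\Famnosig$ into the class $\Famquantile$ with an explicitly linear bound $R$. First I would fix $F \in \Famnosig$, which by definition has a unique median; write $m := \med(F)$. The key observation is that the linear-growth hypothesis~\eqref{eq:def-Fam} prevents the left and right quantile functions of $F$ from moving too fast near $m$: for every $t \in [0,\tmax]$ one should have $\QRF(\half+t)-m \le B\sigmed(F)\,t$ and $m-\QLF(\half-t) \le B\sigmed(F)\,t$. Establishing this is exactly the content of Lemma~\ref{lem:fam-properties} (compare remark~ii) following Definition~\ref{def:2-param}), so I would invoke it to conclude that $F \in \Famquantile$ with the choice $R(t) := B\sigmed(F)\,t$, which is a non-decreasing (indeed linear) function on $[0,\tmax]$.

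Next I would check the domain condition: as recorded just after Definition~\ref{def:fam-quantile}, the hypothesis $\eps \in (0,\epsmax(\tmax))$ is equivalent to $\tfrac{\eps}{2(1-\eps)} \le \tmax$, so the argument at which $R$ is to be evaluated lies in its domain $[0,\tmax]$, and Corollary~\ref{corol:change-med-R} is applicable. Since our single fixed $F$, together with its unique median $m$, is one of the configurations over which the supremum in Corollary~\ref{corol:change-med-R} ranges, it then follows that
\[
\sup_{\substack{\mathrm{distribution}\; G, \\ \tilde{m} \in \med((1 - \eps) F + \eps G)}}
\left|\tilde{m} - m \right|
\;\le\;
R\!\left(\tfrac{\eps}{2(1-\eps)}\right)
\;=\;
B\sigmed(F)\,\tfrac{\eps}{2(1-\eps)}
\;=\;
\UncertaintyF ,
\]
where the final equality is just the definition of $\Uncertaintygen$ with $\sigmed$ set to $\sigmed(F)$. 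This is the asserted inequality.

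I do not expect a genuine obstacle: the only real work is the embedding step, i.e.\ turning the increment bound on $F$ into an increment bound on its (possibly non-strictly-monotone, possibly discontinuous) left and right quantile functions, and that bookkeeping is encapsulated in Lemma~\ref{lem:fam-properties}. As a sanity check, or as an alternative route that avoids $\Famquantile$ altogether, one can apply Lemma~\ref{lem:change-med} directly: for $F \in \Famnosig$ with $\tfrac{\eps}{2(1-\eps)}\le \tmax$, each of the two quantile gaps appearing on the right-hand side of Lemma~\ref{lem:change-med} is bounded by $B\sigmed(F)\tfrac{\eps}{2(1-\eps)}$ via~\eqref{eq:def-Fam}, which gives the same conclusion.
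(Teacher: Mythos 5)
Your proposal is correct and follows essentially the same route as the paper: the paper likewise notes that $F \in \Famnosig$ implies $F \in \Famquantile$ with $R(t) = tB\sigmed(F)$ (via Lemma~\ref{lem:fam-properties}) and then reads off the bound from Corollary~\ref{corol:change-med-R}, with the hypothesis $\eps < \epsmax(\tmax)$ guaranteeing $\tfrac{\eps}{2(1-\eps)} \le \tmax$. Your alternative route through Lemma~\ref{lem:change-med} is just the same argument unrolled, so nothing further is needed.
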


\begin{remark}[Tightness of Corollary~\ref{corol:change-med}] \label{Remark1}
For any $a > 0$, let $F$ be the uniform distribution over $[0,a]$. Then $\med(F) = \tfrac{a}{2}$ and $\sigmed(F) = \tfrac{a}{4}$, and thus $F \in \Famnosig$ for any $\bar t \in (0, \half)$ and $B = 4$. Now for any $\eps \in (0, \epsmax(\tmax)) \subset (0, \half)$, let $G$ be the uniform distribution over $[a, \tfrac{a}{1-\eps}]$. Then $\tilde{F} = (1 - \eps)F + \eps G$ is the uniform distribution over $[0, \tfrac{a}{1-\eps}]$ and has median $m_1(F) = \tfrac{a}{2} + \tfrac{\eps}{2(1-\eps)}a = \med(F) + \UncertaintyF$. An identical argument on $F(-\cdot)$ and $G(-\cdot)$ yields an example where the median decreases by $-\UncertaintyF$, so the bound in Corollary~\ref{corol:change-med} is tight.
\end{remark}

Now we turn to median estimation bounds for $\Famnosig$. Since $R(t) = tBm_2(F)$ is clearly Lipschitz, the discussion preceding Lemmas~\ref{lem:est-med-R} and~\ref{lem:est-med-malicious-R} about the optimality of the following error bounds applies. That is, the error decomposes into the sum of the unavoidable uncertainty term $\UncertaintyF$ plus a confidence-interval term decaying at a $n^{-1/2}$ rate and with sub-Gaussian tails in $\delta$, both of which are optimal. The $\bar{t}$ terms in the sample complexity bounds appear for the analogous reasons as in Lemma~\ref{lem:est-med-R}; see the discussion there. For brevity, we omit the proofs of the following corollaries since they follow immediately from Lemmas~\ref{lem:est-med-R},~\ref{lem:est-med-malicious-R}, and~\ref{lem:fam-properties}. 
\begin{corollary}\label{corol:est-med}
  Consider the same setup as in Lemma~\ref{lem:est-med-R}, except with the added restriction that $F \in \Famnosig$. Then, for any confidence level $\delta \in (0,1)$, error level $E > 0$, and number of samples $n \geq 2 \max\left(\tfrac{B^2\sigmedsq(F)}{E^2}, \,  \left(\tmax - \tfrac{\eps}{2(1-\eps)}\right)^{-2} \right) \logtdel$, we have
\[
\Prob\Bigg(
\left| \hatmed(X_1, \dots, X_n) - \med(F) \right| \leq
\UncertaintyF + E\Bigg)
\geq 1 - \delta.
\]
\end{corollary}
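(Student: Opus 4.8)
The plan is to deduce Corollary~\ref{corol:est-med} directly from Lemma~\ref{lem:est-med-R} by exploiting the fact that membership in $\Famnosig$ forces a \emph{linear} (hence Lipschitz) control on the quantiles near the median. The first step is to invoke Lemma~\ref{lem:fam-properties}: it shows that whenever $F \in \Famnosig$, then $F \in \Famquantile$ for the explicit choice $R(t) = B\sigmed(F)\,t$, which is non-decreasing and in fact linear in $t$. Moreover every $F \in \Famnosig$ has a unique median by Definition~\ref{def:2-param}, so the supremum over $m \in \med(F)$ appearing in Lemma~\ref{lem:est-med-R} collapses to the single point $\med(F)$.

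With this $R$ in hand, the second step is to apply Lemma~\ref{lem:est-med-R} verbatim: since the hypotheses of that lemma hold (the setup is inherited, and $\eps \in (0,\epsmax(\tmax))$), as soon as $n \geq 2\left(\tmax - \tfrac{\eps}{2(1-\eps)}\right)^{-2}\logtdel$ we get, with probability at least $1-\delta$,
\[
\left|\hatmed(X_1,\dots,X_n) - \med(F)\right| \;\leq\; R\!\left(\frac{\eps}{2(1-\eps)} + \sqrt{\frac{2\logtdel}{n}}\right).
\]
The third step uses linearity of $R$ to split the right-hand side:
\[
R\!\left(\frac{\eps}{2(1-\eps)} + \sqrt{\frac{2\logtdel}{n}}\right) = B\sigmed(F)\frac{\eps}{2(1-\eps)} + B\sigmed(F)\sqrt{\frac{2\logtdel}{n}} = \UncertaintyF + B\sigmed(F)\sqrt{\frac{2\logtdel}{n}},
\]
where the first summand is recognized as exactly the unavoidable bias $\UncertaintyF = B\sigmed(F)\tfrac{\eps}{2(1-\eps)}$.

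It then only remains to force the residual term $B\sigmed(F)\sqrt{2\logtdel/n}$ to be at most $E$, which is equivalent to $n \geq 2B^2\sigmedsq(F)E^{-2}\logtdel$. Combining this with the sample-size floor already required by Lemma~\ref{lem:est-med-R} yields precisely the stated threshold $n \geq 2\max\!\big(B^2\sigmedsq(F)E^{-2},\ (\tmax - \tfrac{\eps}{2(1-\eps)})^{-2}\big)\logtdel$, and the corollary follows. There is no genuine obstacle here beyond bookkeeping; the only points requiring care are (a) verifying that $R(t) = B\sigmed(F)\,t$ is the correct Lipschitz witness extracted from Definition~\ref{def:2-param} via Lemma~\ref{lem:fam-properties}, and (b) correctly merging the two lower bounds on $n$ into a single maximum. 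The analogous malicious-adversary corollary (Corollary~\ref{corol:est-med-malicious}) follows identically, using Lemma~\ref{lem:est-med-malicious-R} in place of Lemma~\ref{lem:est-med-R} and replacing $\tfrac{\eps}{2(1-\eps)}$ by $\eps$ throughout.
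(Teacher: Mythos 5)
Your proposal is correct and is precisely the argument the paper intends: the paper omits the proof, noting only that the corollary "follows immediately from Lemmas~\ref{lem:est-med-R} and~\ref{lem:fam-properties}," and your route --- extracting the linear witness $R(t) = B\sigmed(F)\,t$ from Definition~\ref{def:2-param}, collapsing the supremum via uniqueness of the median, splitting the bound by linearity into $\UncertaintyF$ plus a residual, and merging the two sample-size constraints into the stated maximum --- is exactly that deduction. No gaps.
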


\begin{corollary}\label{corol:est-med-malicious}
Consider the same setup as in Lemma~\ref{lem:est-med-malicious-R}, except with the added restriction that $F \in \Famnosig$. Then for any confidence level $\delta \in (0,1)$, error level $E > 0$, and number of samples
$n \geq  2 \max\left(\tfrac{B^2\sigmedsq(F)}{E^2},\,\left(\tmax -\eps\right)^{-2} \right) \log\tfrac{3}{\delta}$,
\[
\Prob\Bigg(
\left| \hatmed(X_1, \dots, X_n) - \med(F) \right| \leq
\UncertaintyFprescient + E
\Bigg)
\geq 1 - \delta.
\]
\end{corollary}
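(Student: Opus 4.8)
The plan is to obtain Corollary~\ref{corol:est-med-malicious} as a direct specialization of Lemma~\ref{lem:est-med-malicious-R}, exactly mirroring how Corollary~\ref{corol:est-med} follows from Lemma~\ref{lem:est-med-R}. First I would appeal to Lemma~\ref{lem:fam-properties} to record the inclusion $\Famnosig \subseteq \Famquantile$ for the affine choice $R(t) = t\,B\sigmed(F)$: the increment bound $|F(x_1)-F(x_2)| \geq \tfrac{1}{B\sigmed(F)}|x_1-x_2|$ on $[\QLF(\half-\tmax), \QRF(\half+\tmax)]$ from Definition~\ref{def:2-param} is precisely the statement that the quantiles $\QRF(\half+t)$ and $\QLF(\half-t)$ deviate from the (unique) median by at most $t\,B\sigmed(F)$ for every $t \in [0,\tmax]$, i.e.\ condition~\eqref{eq:def-R} holds with this $R$. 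Since members of $\Famnosig$ have a unique median, the supremum over $m \in \med(F)$ appearing in Lemma~\ref{lem:est-med-malicious-R} collapses to the single quantity $\big|\hatmed(X_1,\dots,X_n) - \med(F)\big|$.

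Next I would invoke Lemma~\ref{lem:est-med-malicious-R} with this $R$. The hypothesis $\eps \in (0,\tmax)$ is assumed, and the sample-size requirement $n \geq 2(\tmax-\eps)^{-2}\log\tfrac{3}{\delta}$ of that lemma is subsumed by the stated hypothesis $n \geq 2\max\!\big(\tfrac{B^2\sigmedsq(F)}{E^2},\,(\tmax-\eps)^{-2}\big)\log\tfrac{3}{\delta}$. Hence, with probability at least $1-\delta$,
\[
\big|\hatmed(X_1,\dots,X_n) - \med(F)\big| \;\leq\; R\!\left(\eps + \sqrt{\frac{2\log(3/\delta)}{n}}\right) \;=\; B\sigmed(F)\,\eps \;+\; B\sigmed(F)\sqrt{\frac{2\log(3/\delta)}{n}},
\]
where the equality uses that $R$ is affine. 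The first summand is exactly the unavoidable malicious bias, $\UncertaintyFprescient = B\sigmed(F)\,\eps$.

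It then remains only to bound the residual $B\sigmed(F)\sqrt{2\log(3/\delta)/n}$ by $E$, which is equivalent to $n \geq 2B^2\sigmedsq(F)E^{-2}\log\tfrac{3}{\delta}$ — precisely the other term inside the $\max$ in the sample-size hypothesis. Combining the two gives $\big|\hatmed(X_1,\dots,X_n) - \med(F)\big| \leq \UncertaintyFprescient + E$ with probability at least $1-\delta$, as claimed. I do not expect any genuine obstacle: the substantive work lives entirely in Lemma~\ref{lem:est-med-malicious-R} (and the elementary quantile bookkeeping of Lemma~\ref{lem:fam-properties}). The only points worth a moment's attention are verifying that the affine $R$ meets condition~\eqref{eq:def-R}, and checking that the two sample-size conditions — one governing the concentration-versus-unidentifiability tradeoff as $\eps \to \tmax$, the other governing the $n^{-1/2}$ confidence width — merge into the single $\max$ stated in the corollary.
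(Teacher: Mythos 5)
Your proposal is correct and takes exactly the paper's route: the paper omits this proof, stating only that it ``follows immediately'' from Lemma~\ref{lem:est-med-malicious-R} and Lemma~\ref{lem:fam-properties}, which is precisely your specialization to the affine $R(t) = tB\sigmed(F)$ followed by splitting the bound into the bias term $\UncertaintyFprescient$ and the confidence term controlled by the first argument of the $\max$ in the sample-size condition. Your check that the two sample-size requirements merge into the stated $\max$ is the only bookkeeping needed, and it is done correctly.
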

In Corollary~\ref{corol:est-med-malicious} above, we have defined \[
\Uncertaintygenprescient := Bm_2\eps,
\] which is a tight bound on the uncertainty in median estimation that a malicious adversary can induce (see Lemma~\ref{lem:est-med-malicious-R-tight}).

\subsection{Algorithms}\label{subsec:cbai-pac}
In this subsection, we apply the algorithms developed in Section~\ref{sec:pibai} for the general problem of \PIBAIspace to the special case of \CBAI. In order to implement the estimator required by part (ii) of the definition of \PIBAIspace (see Section~\ref{sec:pibai}), we make use of the guarantees proved in Subsection~\ref{subsec:contaminated.estimation} for median estimation from contaminated samples. Nearly matching information-theoretic lower bounds are provided in the following subsection and show that the algorithms below have optimal sample complexity (up to a small logarithmic factor).
\par We will only assume that the arms' distributions are in $\Famnosig$ and have robust second moments uniformly bounded\footnote{It is typically necessary to have bounds on higher order moments in order to control the error of estimating lower order moments, see e.g.~\citep{bubeck2013bandits}.} by some $\bar{m}_2$; this will allow us to perform median estimation from contaminated samples. For simplicity, the parameters of this family are assumed to be known to the algorithm beforehand\footnote{The classes of distributions that we define have two parameters that would be hard or even impossible to estimate in practice, namely $B$ and $\bar m_2$. In classical setups in which algorithms need to learn means instead of medians, it is common to assume that there is a second moment $\sigma^2$ which is known, or at least bounded by a known constant. This is because in practice $\sigma^2$ is hard to estimate from the data. Note that if the distributions $F_i$ are known to have bounded support, $\bar m_2$ can be chosen as the size of the domain. It seems that knowing $B$ is the price to pay to go from the classical to the robust setup. Indeed, even in the non-contaminated case, it is well known that the median of a distribution can be estimated at a parametric rate only when the distribution is not too flat around its median. In that case, the estimation rate depends on how ``\textit{non-flat}'' the distribution is, which is exactly $B m_2(F)$ in the present setup. In a parametric setup, i.e., when the distributions of the arms $F_i, i=1,\ldots,K$ are known up to some location and/or scale parameters, $B$ would be known since it is translation and scale invariant (see Lemma~\ref{lem:fam-properties}).}. Note, however, that it is not necessary to know $\eps$, but only an upper bound $\eps_0$ on $\eps$; indeed $\eps$ can be replaced with $\eps_0$ in all our algorithms. It is very standard in the robust statistics literature to assume knowledge of such an upper bound $\eps_0$ on the amount of contamination. The parameter $\bar t$ can then be arbitrarily chosen by the algorithms, so long as it is bigger than $\eps_0/(2(1-\eps_0))$ (or $\eps_0$ in the malicious case) and smaller than 1/2. 
\par The sample complexities we prove below are in terms of the \emph{effective gaps} $\tilde\Delta_i := (m_1(F_{i^*}) - U_{i^*}) - (m_1(F_{i}) + U_i)$ of the suboptimal arms $i \neq i^*$, where $i^* := \argmax_{i \in [k]} m_1(F_i)$ is the arm with the highest median. Here $U_i$ is the unavoidable uncertainty term for median estimation from contaminated samples under the given adversarial setting. In Subsection~\ref{subsec:contaminated.estimation}, we explicitly computed $U_i = U_{\eps, B, m_2(F_i)}$ for oblivious and prescient adversaries (Corollary~\ref{corol:est-med}) and $U_i = U_{\eps,B,\sigmed(F_i)}^{\textsc{(malicious)}}$ for malicious adversaries (Corollary~\ref{corol:est-med-malicious}). We emphasize that the strength of the adversarial setting is encapsulated in the corresponding $U_i$.
\par First, we discuss how the simple Algorithm \ref{alg:Simple} yields an $(\alpha,\delta)$-PAC algorithm for \CBAIspace without modification. In this setting, $\hat p_i$ is the empirical median of the payoffs of arm $i$. The quantity $n_{\alpha,\delta}$ is equal to $2 \max(\tfrac{B^2\bar m_2^2}{\alpha^2}, \,  (\tmax - \tfrac{\eps}{2(1-\eps)})^{-2} ) \logtdel$ against oblivious and prescient adversaries (Corollary \ref{corol:est-med}) and is equal to $2 \max(\tfrac{B^2\bar m_2^2}{\alpha^2}, \,  \left(\tmax - \eps\right)^{-2})  \log\tfrac{3}{\delta}$ against malicious adversaries (Corollary~\ref{corol:est-med-malicious}). Theorem~\ref{Thm:Simple} along with the observation that the constant term in the sample complexity $n_{\alpha,\delta}$ only introduces a term that is negligible in the overall sample complexity of Algorithm~\ref{alg:Simple}, immediately yields the following result.

\begin{theorem}\label{Thm:SimpleCBAI}
Let $F_i \in \Famnosig$ with $m_2(F) \leq \bar{m}_2$ for each arm $i \in [k]$, and let the adversary be oblivious, prescient, or malicious. For any $\alpha>0$ and $\delta\in (0,1)$, Algorithm \ref{alg:Simple} is an $(\alpha,\delta)$-PAC \CBAIspace algorithm with sample complexity $O(kn_{\alpha/2,\delta/k})=O\left(\frac{k}{\alpha^2}\log\frac{k}{\delta}\right)$. 
\end{theorem}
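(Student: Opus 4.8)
The plan is to recognize the restricted \CBAIspace problem in the statement as a special case of \PIBAIspace and then invoke Theorem~\ref{Thm:Simple} essentially verbatim. Concretely, take the quality measure of arm $i$ to be $p_i := m_1(F_i)$, so that $i^* = \argmax_{i} m_1(F_i)$ as required; let the nuisance parameter be the contamination distribution $G$ (with $D_i(p_i,G)$ the law of $(1-D)Y + DZ$ for $Y \sim F_i$, $D \sim \Ber(\eps)$, $Z \sim G$); and take the unavoidable bias $U_i$ to be $\UncertaintyFi$ for oblivious or prescient adversaries and its malicious analogue $U_{\eps,B,\sigmed(F_i)}^{\textsc{(malicious)}}$ for malicious adversaries. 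Under this dictionary, the \PIBAIspace effective gaps $\tilde\Delta_i = (p_{i^*} - U_{i^*}) - (p_i + U_i)$ coincide with the \CBAIspace effective gaps defined in this subsection, and \PIBAIspace $\alpha$-suboptimality of the output $\hat I$, i.e.\ $\tilde\Delta_{\hat I} \leq \alpha$, rearranges to $m_1(F_{\hat I}) \geq m_1(F_{i^*}) - (\alpha + U_{i^*} + U_{\hat I})$, which is the \CBAIspace PAC guarantee~\eqref{def:pac} (with $U$ read as $U_{i^*} + U_{\hat I} \leq 2\UncertaintyFam$, resp.\ $2\UncertaintyFamprescient$ in the malicious case). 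So it suffices to check that this problem instance satisfies properties (i) and (ii) of \PIBAIspace and then read off the conclusion of Theorem~\ref{Thm:Simple}.

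\textbf{Checking the two \PIBAIspace properties.} Property~(i) --- that $m_1(F_i)$ cannot be pinned down more tightly than $[m_1(F_i) \plusminus U_i]$ from infinitely many (possibly non-identically distributed) contaminated samples --- is the content of Lemma~\ref{lem:change-med} specialized to $\Famnosig$ via Corollary~\ref{corol:change-med} (and, for the malicious case, Lemma~\ref{lem:est-med-malicious-R-tight}). Property~(ii) is the heart of the matter, and is supplied by the estimation results of Subsection~\ref{subsec:contaminated.estimation}: the estimator is simply the empirical median $\hat p_i = \hatmed(X_{i,1}, \dots, X_{i,n})$ of arm $i$'s contaminated payoffs. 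Under an oblivious or prescient adversary, the payoff sequence of arm $i$ generated while running Algorithm~\ref{alg:Simple} satisfies the hypotheses of Lemma~\ref{lem:est-med-R} / Corollary~\ref{corol:est-med} (the pairs $(Y_{i,t},D_{i,t})$ are independent across $t$ with the correct marginals $Y_{i,t}\sim F_i$, $D_{i,t}\sim\Ber(\eps)$ and independent within the pair, while the $Z_{i,t}$ are arbitrary and may be adversarially coupled to the true rewards); hence Corollary~\ref{corol:est-med} with error level $E = \alpha$ and the bound $m_2(F_i) \leq \bar m_2$ gives exactly~\eqref{eq:estimator} with $U_i = \UncertaintyFi$ and $n_{\alpha,\delta} = 2\max\bigl(B^2\bar m_2^2/\alpha^2,\, (\tmax - \tfrac{\eps}{2(1-\eps)})^{-2}\bigr)\logtdel$. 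For a malicious adversary, $Y_{i,t}$ and $D_{i,t}$ need no longer be independent, but this is precisely the regime covered by Corollary~\ref{corol:est-med-malicious}, which yields~\eqref{eq:estimator} with the malicious $U_i$ and $n_{\alpha,\delta} = 2\max\bigl(B^2\bar m_2^2/\alpha^2,\, (\tmax - \eps)^{-2}\bigr)\log\tfrac{3}{\delta}$.

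\textbf{Conclusion and the one point to watch.} With both properties verified, Theorem~\ref{Thm:Simple} immediately gives that Algorithm~\ref{alg:Simple} --- run with $\hat p_i$ the empirical median and the $n_{\alpha/2,\delta/k}$ above --- is $(\alpha,\delta)$-PAC for this instance with sample complexity $O(k\, n_{\alpha/2,\delta/k})$, and the rearrangement from the first paragraph converts this into~\eqref{def:pac}. The only point requiring a word of care is that our $n_{\alpha,\delta}$ is not of the clean form $O(\alpha^{-2}\logdel)$ assumed in part~(ii) of the \PIBAIspace definition: it is the maximum of such a term and a term independent of $\alpha$. Treating the problem parameters $B, \bar m_2, \eps, \tmax$ as fixed constants (as throughout), this extra term is an additive constant, so $n_{\alpha/2,\delta/k} = O(\tfrac{1}{\alpha^2}\log\tfrac{k}{\delta})$ and the overall complexity is $O(\tfrac{k}{\alpha^2}\log\tfrac{k}{\delta})$, the additive $O(k\log\tfrac{k}{\delta})$ contributed by the constant term being negligible; alternatively one simply re-runs the trivial union-bound argument of the proof of Theorem~\ref{Thm:Simple} with the exact $n_{\alpha,\delta}$. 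I expect no genuine technical obstacle here --- all the real work is already done in Corollaries~\ref{corol:est-med} and~\ref{corol:est-med-malicious} --- and the proof amounts to assembling the dictionary above and checking that Algorithm~\ref{alg:Simple}'s per-arm sampling falls within the hypotheses of those corollaries under each adversary model.
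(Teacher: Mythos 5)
Your proposal is correct and follows essentially the same route as the paper: instantiate the \PIBAIspace framework with the empirical median as the estimator, supply property~(ii) via Corollaries~\ref{corol:est-med} and~\ref{corol:est-med-malicious} (which fix $n_{\alpha,\delta}$ and the bias terms $U_i$ for each adversary model), invoke Theorem~\ref{Thm:Simple}, and observe that the constant term in $n_{\alpha,\delta}$ is negligible in the overall sample complexity. Your write-up is somewhat more explicit than the paper's (which compresses all of this into a few sentences), but there is no substantive difference.
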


Algorithm~\ref{alg:Simple} does not adapt to the difficulty of the problem instance. A natural approach is to try to apply the Successive Elimination Algorithm to \BAIspace directly. Unfortunately, this algorithm needs concentration inequalities even for small sample sizes, whereas our guarantees are only valid when the sample size is greater than a certain threshold.
Hence, we slightly modify the Successive Elimination Algorithm to obtain additional samples in an initial exploration phase; the number of samples required in this initial exploration phase is dictated by our median estimation results form Subsection~\ref{subsec:contaminated.estimation}. We similarly modify later rounds to take additional samples in order to produce updated median estimates at the desired certainty levels. Pseudocode is given in Algorithm \ref{alg:SuccElimCBAI}.

\begin{algorithm}[t]
$S \leftarrow [k]$, $r \leftarrow 1$
\\ Sample each arm $\lceil N\log(\tfrac{\pi^2 k}{2\delta})\rceil$ times
\\ \While{$|S| > 1$}{
Sample each arm $i \in S$ for 
$1 + \lceil 2N\log(\tfrac{r+1}{r}) \rceil $
times and produce $\hat{p}_{i,r}$ from all past samples\\
$S \leftarrow \{i\in S\,:\, \hat p_{i,r} \geq \max_{j\in S}\hat p_{j,r} - 2\alpha_{r, 6\delta/(\pi^2 k r^2)} \}$\\
$r \leftarrow r+1$
}
Output the only arm left in $S$
\caption{Adaptation of successive elimination algorithm (see Algorithm~\ref{alg:SuccElim}) for $\CBAI$. For oblivious and prescient adversaries, $N:= 2(\tmax - \tfrac{\eps}{2(1-\eps)})^{-2}$; and for malicious adversaries $N:= 2(\tmax - \eps)^{-2}$. In all settings, $\alpha_{r,\delta} := \sqrt{\tfrac{2B\bar m_2^2\log \frac{3}{\delta}}{r}}$.}
\label{alg:SuccElimCBAI}
\end{algorithm}

\begin{theorem} \label{Thm:SuccElimCBAI}
Let $\delta\in (0,1)$, let $F_i \in \Famnosig$ with $m_2(F) \leq \bar{m}_2$ for each arm $i \in [k]$, and let the adversary be either oblivious, prescient, or malicious. With probability at least $1-\delta$, Algorithm \ref{alg:SuccElimCBAI} outputs the optimal arm after using at most 
$\tilde{O}\left(\sum_{i\neq i^*} \left( \tfrac{1}{\tilde\Delta_i^2} + N\right) \log\left(\frac{k}{\delta\tilde\Delta_i}\right)\right)$
samples.
\end{theorem}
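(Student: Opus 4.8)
The plan is to reduce to the analysis of the abstract successive-elimination routine (Algorithm~\ref{alg:SuccElim}, whose guarantees are Theorem~\ref{Thm:SuccElim}), showing that the two modifications in Algorithm~\ref{alg:SuccElimCBAI} -- the warm-up phase of $\lceil N\log(\tfrac{\pi^2 k}{2\delta})\rceil$ pulls per arm and the $1 + \lceil 2N\log(\tfrac{r+1}{r})\rceil$ extra pulls of each surviving arm in round $r$ -- are calibrated precisely so that the contaminated empirical median always meets the sample-size threshold demanded by Corollary~\ref{corol:est-med} (oblivious and prescient adversaries) or Corollary~\ref{corol:est-med-malicious} (malicious adversaries). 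The only role of the adversarial setting is to fix the constant $N$ and to select which corollary -- hence which unavoidable bias $U_i$ -- is invoked; the combinatorial skeleton is identical in all three cases, so I would treat them uniformly, using the slightly conservative confidence level $\log\tfrac{3}{\delta_r}$ (valid even for the oblivious/prescient corollary).

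First I would set up the good event. Fix a round $r$ and an arm $i$ still in $S$, and let $n_{i,r}$ be the number of pulls of arm $i$ through the end of round $r$. The warm-up contributes at least $N\log(\tfrac{\pi^2 k}{2\delta})$ pulls, and since the per-round increments telescope, $\sum_{r'=1}^{r}2N\log(\tfrac{r'+1}{r'}) = 2N\log(r+1)$, so together with the ``$+1$'' in each round, $n_{i,r}\geq r + N\log(\tfrac{\pi^2 k}{2\delta}) + 2N\log(r+1)$. Write $\delta_r := 6\delta/(\pi^2 k r^2)$, so that $\log\tfrac{3}{\delta_r} = \log(\tfrac{\pi^2 k}{2\delta}) + 2\log r$, and note that $\alpha_{r,\delta_r}$ is defined in Algorithm~\ref{alg:SuccElimCBAI} precisely by inverting the $\tfrac{B^2\bar m_2^2}{E^2}\log\tfrac{3}{\delta}$ term of Corollaries~\ref{corol:est-med}/\ref{corol:est-med-malicious}, so that $r$ pulls suffice for an $\alpha_{r,\delta_r}$-accurate median estimate at confidence $\delta_r$. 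The remaining two pieces of $n_{i,r}$ supply exactly $N\log\tfrac{3}{\delta_r}$, i.e.\ the $\tmax$-dependent threshold of those corollaries (recall $N = 2(\tmax - \tfrac{\eps}{2(1-\eps)})^{-2}$ or $N = 2(\tmax-\eps)^{-2}$). Hence $n_{i,r}$ meets the full two-term requirement, and Corollary~\ref{corol:est-med}/\ref{corol:est-med-malicious} gives $\hat p_{i,r}\in[m_1(F_i)\pm(U_i+\alpha_{r,\delta_r})]$ except with probability $\delta_r$. A union bound over the at most $k$ surviving arms and all rounds yields failure probability at most $\sum_{r\geq 1}k\delta_r = \tfrac{6\delta}{\pi^2}\sum_{r\geq 1}r^{-2} = \delta$; call the complementary event $\mathcal{E}$.

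On $\mathcal{E}$ the rest is the argument of Theorem~\ref{Thm:SuccElim} verbatim. Since the biases $U_{i^*},U_j$ are already folded into the effective gap, for the arm $j$ attaining the current maximum one has $\hat p_{j,r} - \hat p_{i^*,r} \leq (m_1(F_j)+U_j) - (m_1(F_{i^*})-U_{i^*}) + 2\alpha_{r,\delta_r} = -\tilde\Delta_j + 2\alpha_{r,\delta_r}\leq 2\alpha_{r,\delta_r}$, so the optimal arm is never eliminated; and a suboptimal arm $i$ is eliminated once $4\alpha_{r,\delta_r} < \tilde\Delta_i$, which -- solving $r \gtrsim \tfrac{B^2\bar m_2^2}{\tilde\Delta_i^2}\log\tfrac{\pi^2 k r^2}{2\delta}$ -- happens by some round $R_i = O\!\left(\tfrac{1}{\tilde\Delta_i^2}\log\tfrac{k}{\delta\tilde\Delta_i}\right)$. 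Thus the loop terminates by round $\max_{i\neq i^*}R_i$; arm $i\neq i^*$ is pulled at most $n_{i,R_i}\leq 2R_i + N\log(\tfrac{\pi^2 k}{2\delta}) + 2N\log(R_i+1) = \tilde O\!\left((\tfrac{1}{\tilde\Delta_i^2}+N)\log\tfrac{k}{\delta\tilde\Delta_i}\right)$ times (substituting $R_i$ and absorbing the $\log R_i$ factor into $\tilde O$), and the optimal arm's pull count is of the same order as that of the hardest suboptimal arm, hence dominated by the sum. Summing over $i\neq i^*$ gives the claimed bound.

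I expect the only nonroutine step to be the calibration in the second paragraph: verifying that the warm-up size $\lceil N\log(\tfrac{\pi^2 k}{2\delta})\rceil$ and the telescoping increments $1 + \lceil 2N\log(\tfrac{r+1}{r})\rceil$ simultaneously dominate \emph{both} terms of the threshold $2\max(\tfrac{B^2\bar m_2^2}{E^2}, (\tmax - \tfrac{\eps}{2(1-\eps)})^{-2})\log\tfrac{3}{\delta}$ of Corollaries~\ref{corol:est-med} and~\ref{corol:est-med-malicious} at every round $r$, while keeping each arm's total pull count within a constant factor of $R_i$ plus the unavoidable $O(N\log(k/\delta))$ warm-up overhead. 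Once that bookkeeping is in place, everything downstream is inherited from the proof of Theorem~\ref{Thm:SuccElim}.
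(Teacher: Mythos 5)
Your proposal is correct and follows essentially the same route as the paper's proof: reduce to the analysis of Theorem~\ref{Thm:SuccElim}, verify that the warm-up pulls plus the telescoping per-round increments guarantee each surviving arm has at least $N\log\bigl(\tfrac{\pi^2 k(r+1)^2}{2\delta}\bigr)+r \geq \bigl(\tfrac{2B^2\bar m_2^2}{\alpha_{r,\delta_r}^2}+N\bigr)\log\tfrac{3}{\delta_r}$ samples at round $r$ so that Corollary~\ref{corol:est-med} or~\ref{corol:est-med-malicious} applies, and then account for the additional $O\bigl(N\sum_{i\neq i^*}\log\tfrac{k}{\delta\tilde\Delta_i}\bigr)$ draws on top of the Theorem~\ref{Thm:SuccElim} sample complexity. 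The calibration bookkeeping you flag as the only nonroutine step is exactly the content of the paper's displayed inequality chain.
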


The proof follows from Theorem~\ref{Thm:SuccElim} and is deferred to Appendix~\ref{app:algorithms}. Like Algorithm~\ref{alg:SuccElim}, Algorithm~\ref{alg:SuccElimCBAI} can be stopped earlier in order to obtain an $(\alpha,\delta)$-PAC guarantee.


\subsection{Lower Bounds}\label{subsec:lb}
Here we provide information-theoretic lower bounds on the sample complexity of the \CBAIspace problem that match, up to small logarithmic factors, the algorithmic upper bounds proved above in Subsection~\ref{subsec:cbai-pac} for each of the three adversarial settings. The main insight is that we can reduce hard instances of the \BAIspace problem to hard instances of the \CBAIspace problem. In this way, we can leverage the sophisticated lower bounds already developed in the classical multi-armed bandit literature. 

Let $i^* := \argmax_{i \in [k]} m_1(F_{i})$ denote the optimal arm. As in Section~\ref{sec:pibai}, $\tilde \Delta_i := (m_1(F_{i^*}) - U_{i^*}) - (m_1(F_{i}) + U_i)$ denotes the effective gap for suboptimal arms $i \neq i^*$, and $U_i$ denotes the unavoidable uncertainty term for median estimation. We emphasize that since the power of the adversary is encapsulated in the $U_i$, and therefore also the effective gaps $\tilde{\Delta}_i$, we can address all three adversarial settings simultaneously by proving a lower bound in terms of the effective gaps.

In Section~\ref{sec:pibai}, we argued that the \PIBAIspace problem is impossible when there exists suboptimal arms with non-positive effective gaps; the \CBAIspace problem also has this property. Indeed, if $i \neq i^*$ satisfies $\tilde\Delta_i \leq 0$, then there exist distributions $G_{i^*}$ and $G_i$ such that the resulting contaminated distributions $\tilde{F}_i := (1 - \eps)F_i + \eps G_{i}$ and $\tilde{F}_{i^*} := (1 - \eps) F_{i^*} + \eps G_{i^*}$ have equal medians. Moreover, since the distributions $F$ are arbitrary (\CBAIspace makes no parameteric assumptions), it is impossible to determine whether arm $i$ or $i^*$ has a higher true median. Thus, any \CBAIspace algorithm, even with infinite samples, cannot succeed with probability more than $\half$. Therefore, we only consider the setting where all effective gaps $\tilde\Delta_i$ are strictly positive.
\par The results in this section lower bound the sample complexity of any \CBAIspace algorithm in terms of the effective gaps. We will focus on lower bounds for the function class $\Famnosig$ and provide lower bounds matching the upper bounds for \CBAIspace in Section~\ref{subsec:cbai-pac}.

\begin{theorem}\label{thm:lb-pac}
  Consider \CBAIspace against an oblivious, prescient, or malicious adversary. There exists a positive constant $B$ such that for any number of arms $k \geq 2$, any confidence level $\delta \in (0, \tfrac{3}{20})$, any suboptimality level $\alpha \in (0, \tfrac 1 6)$, any contamination level $\eps \in (0, \tfrac{1}{15})$, any regularity level $\tmax \in (0, \tfrac{1}{10})$, and any effective gaps $\{\tilde{\Delta}_i\}_{i \in [k] \setminus i^*} \subset (0,\tfrac{1}{3})$, there exists a \CBAIspace instance with $F_1, \dots, F_k \in \mathcal{F}_{\tmax, B}$ for which any $(\alpha,\delta)$-PAC algorithm has expected sample complexity
\[
\E[T] = \Omega\left(
\sum_{i \in [k] \setminus \{i^*\}} \frac{1}{\max(\tilde{\Delta}_i, \alpha)^2}\logdel 
\right),
\]
where $i^* = \argmax_{i \in [k]} m_1(F_i)$ is the optimal arm.
\end{theorem}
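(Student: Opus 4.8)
The plan is to follow the reduction advertised before the statement: once the contamination distributions are fixed, a $\CBAIspace$ instance presents the learner with an ordinary stochastic bandit over the observed laws $\tilde F_i = (1-\eps)F_i + \eps G_i$, and only the PAC criterion ``remembers'' the underlying $F_i$ through the effective gaps. I will exhibit one hard instance $\mathcal I$ and, for each suboptimal arm $i$, a companion instance $\mathcal I^{(i)}$, such that (a) on $\mathcal I$ the unique $\alpha$-optimal arm is $i^*$ whereas on $\mathcal I^{(i)}$ it is $i$; and (b) the observed laws of $\mathcal I$ and $\mathcal I^{(i)}$ agree on every arm except $i$, with $\mathrm{KL}\big(\tilde F_i^{\mathcal I}\,\big\|\,\tilde F_i^{\mathcal I^{(i)}}\big) = O\!\big(\max(\tilde\Delta_i,\alpha)^2\big)$. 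Writing $N_i$ for the number of pulls of arm $i$ and applying the standard change-of-measure inequality (the divergence-decomposition / transportation lemma underlying the lower bounds of Mannor--Tsitsiklis and Kaufmann et al.) to the event $\{\hat I = i^*\}$, which has probability $\geq 1-\delta$ under $\mathcal I$ and $\leq \delta$ under $\mathcal I^{(i)}$, gives $\E_{\mathcal I}[N_i]\cdot \mathrm{KL}\big(\tilde F_i^{\mathcal I}\,\big\|\,\tilde F_i^{\mathcal I^{(i)}}\big) \geq \mathrm{KL}(1-\delta\,\|\,\delta) = \Omega(\logdel)$ for $\delta < \tfrac{3}{20}$. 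Hence $\E_{\mathcal I}[N_i] = \Omega\!\big(\max(\tilde\Delta_i,\alpha)^{-2}\logdel\big)$, and summing over $i \neq i^*$ shows that $\mathcal I$ itself has $\E[T] = \Omega\!\big(\sum_{i\neq i^*}\max(\tilde\Delta_i,\alpha)^{-2}\logdel\big)$.

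The technical heart is the construction, and in particular making it cooperate with partial identifiability. For the observed laws I will take distributions on a common bounded interval consisting of a wide uniform ``core'' $[\mu-1,\mu+1]$ around a chosen observed median $\mu$ together with a small mass placed uniformly in the two tails; because all observed laws then share a common support with a uniformly bounded density ratio, $\mathrm{KL}$ between two of them is $O$ of the squared difference of their medians. The key step is that, given any such $\tilde F$ with median $\mu$, both conditionings of $\tilde F$ on its bottom (respectively top) $1-\eps$ fraction of mass are again ``uniform-core'' distributions, lie in $\Famnosig$ with $B = 4$ (their near-median density and median absolute deviation are in exactly the ratio dictated by Definition~\ref{def:2-param}, and both are independent of $\mu$), and have true median exactly $\mu - U$ (respectively $\mu + U$), where $U$ is precisely the unavoidable bias of Corollary~\ref{corol:est-med}; moreover $\tilde F$ equals $(1-\eps)$ times such a conditioning plus $\eps$ times a suitable contamination, so each is a legitimate un-contamination of $\tilde F$. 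For the malicious adversary one uses instead the larger bias of Corollary~\ref{corol:est-med-malicious} and the coupling of Lemma~\ref{lem:est-med-malicious-R-tight} to realize the shift $\pm U^{\textsc{(malicious)}}$; everything else is unchanged, which is why all three adversary models are handled at once.

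With this toolkit, in $\mathcal I$ I choose the observed medians so that $\mu_{i^*} - \mu_i = \tilde\Delta_i$ for $i\neq i^*$, use for arm $i^*$ the un-contamination with true median $\mu_{i^*}+U_{i^*}$ and for each suboptimal arm $i$ the one with true median $\mu_i - U_i$; then $i^* = \argmax_j m_1(F_j)$ and the effective gap is $(m_1(F_{i^*})-U_{i^*}) - (m_1(F_i)+U_i) = \mu_{i^*}-\mu_i = \tilde\Delta_i$, so $\mathcal I$ is a valid instance with exactly the prescribed effective gaps (and, when all $\tilde\Delta_i>\alpha$, with unique $\alpha$-optimal arm $i^*$; if some $\tilde\Delta_i \le \alpha$ that term simply drops from the bound). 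In $\mathcal I^{(i)}$ I move only arm $i$'s observed median, up to $\mu_i^{(i)} = \mu_{i^*} + \tilde\Delta_i + \alpha + \eta$ with $\eta>0$ arbitrarily small, and now take arm $i$'s un-contamination with true median $\mu_i^{(i)} + U_i$ and arm $i^*$'s with true median $\mu_{i^*} - U_{i^*}$ (leaving arm $i^*$'s observed law untouched). Then arm $i$'s lower endpoint $m_1(F_i)-U_i = \mu_i^{(i)}$ exceeds arm $i^*$'s upper endpoint $m_1(F_{i^*})+U_{i^*} = \mu_{i^*}$ by $\tilde\Delta_i + \alpha + \eta > \alpha$, so arm $i$ becomes the unique $\alpha$-optimal arm and $i^*$ is more than $\alpha$-suboptimal, while only arm $i$'s observed law has moved, by a median shift of $\tilde\Delta_i + \alpha + \eta = \Theta(\max(\tilde\Delta_i,\alpha))$, giving (b).

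The main obstacle, which this construction is designed to overcome, is that flipping the identity of the best arm \emph{seems} to force moving some arm's \emph{true} median by $\Theta(\eps + \tilde\Delta_i)$ — the effective gap always carries the buffer $U_i + U_{i^*} = \Theta(\eps)$ — which would move an \emph{observed} law by $\Theta(\eps)$ and yield only the weak bound $\Omega(\eps^{-2}\logdel)$. The resolution is to exploit that the true median may sit up to $U$ on \emph{either} side of the observed median: the required $\Theta(\eps)$ motion of the true median is ``hidden'' inside the uncertainty window by switching which un-contamination is used, so the observed law changes by only $\Theta(\max(\tilde\Delta_i,\alpha))$. What remains is routine: checking that the numerical ranges ($\eps < \tfrac1{15}$, $\tmax < \tfrac1{10}$, $\alpha < \tfrac16$, $\delta < \tfrac3{20}$) keep all relevant quantiles inside the uniform cores and make $\mathrm{KL}(1-\delta\,\|\,\delta) = \Omega(\logdel)$; confirming that the ``uniform-core-plus-tails'' laws and their conditionings indeed lie in $\mathcal F_{\tmax,4}$ (via the density examples following Definition~\ref{def:2-param}); and reading off the $\max(\tilde\Delta_i,\alpha)$ form, where the $\alpha$-term is exactly the cost of having to certify that arm $i$ is \emph{not} $\alpha$-better than $i^*$.
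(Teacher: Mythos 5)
Your high-level strategy is the right one and is essentially the paper's: the learner only ever sees the contaminated laws, so one reduces to a change-of-measure argument over observed distributions, and the crux is an ``un-contamination'' that places each arm's true median at either extreme of its $\pm U_i$ window so that (a) the effective gaps equal the observed median gaps and (b) flipping the identity of the best arm costs only $\Theta(\max(\tilde\Delta_i,\alpha))$ of observed-law perturbation rather than $\Theta(\eps)$. Your ``main obstacle'' paragraph is exactly the paper's lifting idea, and your treatment of the malicious case via the coupling of Lemma~\ref{lem:est-med-malicious-R-tight} matches the paper's. The paper packages the change of measure by citing Kaufmann et al.'s Lemma~1 for a fixed family of observed laws; you redo it by hand with per-arm alternatives $\mathcal I^{(i)}$, which is fine.

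However, there is a genuine quantitative gap in your KL step. You take the observed laws to be translates of a fixed shape --- a uniform core $[\mu-1,\mu+1]$ plus fixed tails --- and assert that a common support with bounded density ratio forces $\mathrm{KL}=O((\mu-\mu')^2)$. That inference is false for a translation family whose density has jumps at the (moving) core boundaries: if two such laws have cores offset by $\Delta$ and densities $c_1$ (core) and $c_2$ (tail), they disagree on two intervals of length $\Delta$ where the ratio is the constant $c_1/c_2$, so $\mathrm{KL}=\Delta\,(c_1-c_2)\log(c_1/c_2)=\Theta(\Delta)$, not $\Theta(\Delta^2)$. (This is the classic failure of quadratic-mean differentiability for location families with discontinuous densities.) Plugged into your transportation inequality, this only yields $\E[N_i]=\Omega\bigl(\max(\tilde\Delta_i,\alpha)^{-1}\logdel\bigr)$, which misses the claimed rate. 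The fix is to move the observed median by reweighting mass at \emph{fixed} locations rather than by translating the support: this is precisely why the paper uses the smoothed Bernoulli family $\SBer(p)=\tfrac12\Ber(p)+\tfrac12\UnifI$, whose median is $p$ (it sits inside the fixed uniform component) while varying $p$ only changes the weights of the atoms at $0$ and $1$, giving $\mathrm{KL}(\SBer(p),\SBer(q))=\tfrac12\mathrm{KL}(\Ber(p),\Ber(q))=O((p-q)^2)$ for $p,q$ bounded away from $0$ and $1$. Your two-sided un-contamination trick carries over verbatim to that family (the paper's explicit liftings realize the exact $\pm U_i$ median shifts with Dirac contaminations at $0$ and $1$), so replacing your observed laws by $\SBer(p_i)$ repairs the argument; as written, the construction does not deliver the $\max(\tilde\Delta_i,\alpha)^{-2}$ dependence. (A minor additional slip: for arms with $\tilde\Delta_i\le\alpha$ the corresponding term does not ``drop'' from the bound --- it contributes $\alpha^{-2}\logdel$ --- though your $\mathcal I^{(i)}$, which shifts arm $i$ by $\tilde\Delta_i+\alpha+\eta$, would in fact cover this case once the KL issue is fixed.)
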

Taking the limit as $\alpha \to 0$ in Theorem~\ref{thm:lb-pac} immediately yields the following lower bound on $(0,\delta)$-PAC algorithms.
\begin{corollary}\label{corol:lb}
Consider the same setup as in Theorem~\ref{thm:lb-pac}. There exists a \CBAIspace instance $F_1, \dots, F_k \in \mathcal{F}_{\tmax, B}$ for which any $(0,\delta)$-PAC algorithm has expected sample complexity
\[
\E[T] = \Omega\left(
\sum_{i \in [k] \setminus \{i^*\}} \frac{1}{\tilde{\Delta}_i^2}\logdel
\right).
\]
\end{corollary}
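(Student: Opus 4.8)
The plan is to deduce this directly from Theorem~\ref{thm:lb-pac} by exploiting monotonicity of the PAC guarantee in the suboptimality level $\alpha$. The key observation is that any $(0,\delta)$-PAC algorithm is automatically an $(\alpha,\delta)$-PAC algorithm for every $\alpha>0$: an arm $\hatI$ that is $0$-suboptimal, i.e. $\tilde\Delta_{\hatI}\leq 0$, is \emph{a fortiori} $\alpha$-suboptimal, so $\Prob(\tilde\Delta_{\hatI}\leq\alpha)\geq\Prob(\tilde\Delta_{\hatI}\leq 0)\geq 1-\delta$. Consequently, any sample-complexity lower bound that holds for all $(\alpha,\delta)$-PAC algorithms on a given instance also holds for all $(0,\delta)$-PAC algorithms on that same instance.

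Concretely, I would fix the desired collection of effective gaps $\{\tilde\Delta_i\}_{i\neq i^*}\subset(0,\tfrac13)$ and then select a single suboptimality level $\alpha$ small enough not to interfere with these gaps: any $\alpha\in\bigl(0,\min\{\tfrac16,\ \min_{i\neq i^*}\tilde\Delta_i\}\bigr)$ works, since then $\max(\tilde\Delta_i,\alpha)=\tilde\Delta_i$ for every suboptimal arm $i$, while $\alpha$ still satisfies the hypotheses of Theorem~\ref{thm:lb-pac}. Applying Theorem~\ref{thm:lb-pac} with this $\alpha$ (and the given $k$, $\delta$, $\eps$, $\tmax$, and the universal constant $B$) produces a \CBAIspace instance with $F_1,\dots,F_k\in\mathcal{F}_{\tmax,B}$ on which every $(\alpha,\delta)$-PAC algorithm has $\E[T]=\Omega\bigl(\sum_{i\neq i^*}\tilde\Delta_i^{-2}\logdel\bigr)$. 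By the monotonicity observation above, the same instance and the same bound apply to every $(0,\delta)$-PAC algorithm, which is exactly the claim. Equivalently, one may phrase the argument as letting $\alpha\downarrow 0$ in Theorem~\ref{thm:lb-pac}, using that $\max(\tilde\Delta_i,\alpha)\to\tilde\Delta_i$ and that neither the constant hidden in the $\Omega$ nor $B$ depends on $\alpha$.

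There is essentially no technical obstacle here; the only point warranting a moment's care is that the instance delivered by Theorem~\ref{thm:lb-pac} may nominally depend on the chosen $\alpha$, so one should fix a single admissible $\alpha$ up front rather than invoke the theorem along a sequence $\alpha_n\downarrow 0$ and worry about whether the resulting instances converge. Since for every sufficiently small $\alpha$ the theorem already yields the $\alpha$-independent bound $\Omega\bigl(\sum_{i\neq i^*}\tilde\Delta_i^{-2}\logdel\bigr)$, picking any one such $\alpha$ suffices, and no limiting argument over instances is actually required.
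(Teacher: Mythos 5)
Your proposal is correct and follows essentially the same route as the paper, which deduces the corollary by letting $\alpha \to 0$ in Theorem~\ref{thm:lb-pac} and observing that a $(0,\delta)$-PAC algorithm is in particular $(\alpha,\delta)$-PAC. Your refinement of fixing a single admissible $\alpha < \min_{i\neq i^*}\tilde\Delta_i$ so that $\max(\tilde\Delta_i,\alpha)=\tilde\Delta_i$, rather than invoking a genuine limit over instances, is a slightly cleaner way of phrasing the same argument.
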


\paragraph*{Proof Sketches.}
We now sketch the proofs of Theorem~\ref{thm:lb-pac} and Corollary~\ref{corol:lb}. Full details are deferred to Appendix~\ref{app:lb}.remove{ for brevity of the main text.}

The key idea in the proof is to ``lift''  hard \BAIspace instances to hard \CBAIspace problem instances. Specifically, let $\{P_i\}_{i \in [k]}$ be distributions for arms in a \BAIspace problem, let $\{U_i\}_{i \in [k]}$ be the corresponding unavoidable uncertainties for median estimation of $P_i$ in \CBAIspace, and let $i^*$ denote the best arm. We say $\{P_i\}_{i \in [k]}$ admits an ``$(\eps, \Famnosig)$\emph{CBAI-lifting}'' to $\{F_i\}_{i \in [k]}$ if (i) $F_i \in \Famnosig$ for each $i$, (ii) there exists some adversarial distributions $\{G_i\}_{i \in [k]}$ such that each $P_i= (1 - \eps) F_i + \eps G_i$; and (iii) the effective gaps $\tilde{\Delta}_i$ for the $\{F_i\}_{i \in [k]}$ are equal to the gaps $\Delta_i$ in the original \BAIspace problem. Intuitively, such a lifting can be thought of as choosing $F_{i^*}$ to have the smallest possible median and the other $F_i$ to have the largest possible median, which still being consistent with $P_i$ and remaining in the uncertainty region corresponding to the $U_i$.

\par Armed with this informal definition, we now outline the central idea behind the reduction. Let $\{P_i\}_{i \in [k]}$ be a ``hard'' \BAIspace instance with best arm $i^*$, and assume it admits an $(\eps,\Famnosig)$-\CBAI-lifting to $\{F_i\}_{i \in [k]}$. Consider running a \CBAIspace algorithm $\calA$ on the samples obtained from  $\{P_i\}$; we claim that if $\calA$ is $(\alpha,\delta)$-PAC, then it must output an $\alpha$-suboptimal arm for the original \BAIspace problem. Indeed, by (ii), the samples from the \BAIspace instance have the same law as the samples that would be obtained from the \CBAIspace problem with $\{F_i\}$, and by (iii), the effective gaps in the \CBAIspace problem are equal to the gaps in the original problem. Therefore, $\calA$ must have sample complexity for this problem that is no smaller than the sample complexity of the best \BAIspace algorithm.

\par Note that the $\{F_i\}$ in the above lifting need only \emph{exist}, as we do not explicitly use these distributions but instead simply run $\calA$ on samples from the original bandit instance. Ensuring the existence of such a lifting is the main obstacle, due the following technical yet important nuance: most \BAIspace lower bounds are constructed from Bernoulli or Gaussian distributions, neither of which is compatible with the reduction described above. The problem with Bernoulli arms is that they do not have liftings to $\Famnosig$: any resulting $F_i$ would not satisfy~\eqref{eq:def-Fam} and thus cannot be in $\Famnosig$.  Gaussian arms  run into a different problem: it is not clear how to shift Gaussian arms up or down far enough to change the median by exactly the maximum uncertainty amount $U_i$. We overcome this nuance by considering arms with \textit{smoothed Bernoulli distribution} $\SBer(p)$, which we define to be the uniform mixture between a Bernoulli distribution with parameter $p$ and a uniform distribution over $[0,1]$. Indeed, this distribution is $(\eps, \Famnosig)$-\CBAI-liftable: unlike the Bernoulli distribution, it is smooth enough to have liftings in $\Famnosig$; and unlike the Gaussian distribution, the median of the appropriate lifting of $\SBer(p)$ is exactly $U_i$ away from the median of $\SBer(p)$. Both of these facts are simple calculations; see Appendix~\ref{app:lb} for details.

The only ingredient remaining in the proof is to prove that there are hard instances for \BAIspace with $\SBer$-distributed arms, which follows easily from Lemma 1 and Remark 5 in \cite{kaufmann2016complexity}.

%

\section{Quality Guarantees for the Selected Arm}\label{sec:quality}
In the previous Section~\ref{sec:median}, we developed algorithms for \CBAIspace and proved that they select the best (or an approximately best) arm with high probability. However, in many applications, it is desirable to also provide guarantees on the quality of the selected arm such as ``with probability at least $80\%$, a new random variable $Y \sim F_{\hat{I}}$ is at least 10.''

Such a quality guarantee could be accomplished directly using the machinery developed earlier in Subsection~\ref{subsec:contaminated.estimation} by, for example, estimating the $0.2$ quantile in addition to the median (the $0.5$ quantile). However, this approach has two problems. First, if we would like to obtain such a guarantee for multiple probability levels (such as $55\%,60\%,\dots,$ etc.) we would have to perform a separate estimation for each of the corresponding quantiles. Second, and more importantly, estimation of a quantile $q$ from contaminated samples requires control of quantiles in a $\frac{\eps}{2(1 -\eps)}$ neighborhood of $q$ (this follows by an identical argument as in Lemma~\ref{lem:change-med}), which can severely restrict the range of quantiles that are  possible to estimate.

We circument both of these problems by estimating the \textit{median absolute deviation} (MAD) in addition to the median. The MAD describes the scale of the tails of a distribution away from the median and is the appropriate analogue to variance (which may not exist for $F$ and, we stress, is not estimable from contaminated samples) for this contamination model setting.

This section is organized as follows. Subsection~\ref{subsec:mad-finite} develops these statistical results for estimation of the MAD from contaminated samples. Subsection~\ref{subsec:cbai-quantile} then uses these results to prove that the \CBAIspace algorithms presented earlier in Subsection~\ref{subsec:cbai-pac} -- with no additional modifications -- can also provide quality guarantees for the selected arm. That is, we show that in addition to outputting the best (or an approximately best) arm, these algorithms also provide quality guarantees to a certain precision ``for free'' without needing extra samples.

\subsection{Estimation of Second Robust Moment from Contaminated Samples}\label{subsec:mad-finite}
In this subsection, we develop the statistical results needed to obtain quality guarantees for the selected arm in the \CBAIspace problem. Specifically, we consider only a single arm and obtain non-asymptotic sample-complexity bounds for estimation of the second robust moment (the MAD) from contaminated samples. We do this for all three adversarial models for the contamination (oblivious, prescient, and malicious). These results may be of independent interest to the robust statistics community.
\par The subsection is organized as follows. First, Subsection~\ref{subsubsec:mad:class} introduces a class of distributions for which the MAD is estimable from contaminated samples. The few assumptions we make are common in the statistics literature, and we give multiple examples showing that many common distributions are included in this class. Subsection~\ref{subsubsec:mad:estimate} then proves finite-sample guarantees for MAD estimation from contaminated samples for this class of distributions and for all three adversarial contamination settings.

\subsubsection{A Class of Distributions with Estimable Second Robust Moment}\label{subsubsec:mad:class}
Like the median, the MAD is not fully identifiable from contaminated samples (see the discussion in Section~\ref{sec:median-infinite}). However, we can estimate the MAD up to a reasonable region of uncertainty with only a few additional assumptions.

\begin{definition}\label{def:fam-mad}
For any $\tmax \in (0, \half)$, $B > 0$, $\sigmedmax > 0$, and $\kappa \geq 0$, let $\Fam$ be the family of distributions $F$ with unique robust moments $m_1(F)$, $m_2(F)$, and $m_4(F)$ satisfying:
\begin{itemize}
\item[(i)]\eqref{eq:def-Fam} holds for all $x_1, x_2 \in  \left[\QLF(\half - \tmax), \QRF(\half + \tmax)\right] \cup [m_1(F) \plusminus 2m_2(F)]$.
\item[(ii)] $\sigmed(F) \leq \sigmedmax$.
\item[(iii)] $m_2(F) \leq \kappa m_4(F)$.
\end{itemize}
\end{definition}
Let us make a few remarks on this definition.
i) We emphasize that our \CBAIspace algorithms already have optimality guarantees when the arm distributions are in $\Famnosig$ (see Section~\ref{subsec:cbai-pac}); the additional assumption that the arm distributions are in $\Fam$ allows us to also obtain quantile guarantees on the arm returned by the algorithm (see Section~\ref{subsec:cbai-quantile}).
ii) Property (iii) of the definition requires a bound on the fourth robust moment, the strength of which is dictated by the parameter $\kappa$. Informally, higher robust moments are necessary to control the error of estimation of lower robust moments, analogous to how bounds on variance (resp. kurtosis) are typically necessary for estimation of a distribution's mean (resp. variance). 
iii) Note that distributions in $\Fam$ are not required to have densities, nor even be continuous.
iv) The family $\Fam$ satisfies some natural and expected properties, such as closure under translation. These properties are gathered and proved in Lemma~\ref{lem:fam-properties}, which is deferred to Appendix~\ref{app:prop-fam} for brevity of the main text.

\begin{remark} [Examples]
Many common distributions (e.g. the normal distribution, the uniform distribution, the Cauchy distribution, etc.) belong to $\Fam$ for some values of the parameters $\tmax, B, \bar{m}_2$, and $\kappa$. For instance, the uniform distribution on the interval $[a,b]$ is in $\Fam$ for any $\bar t\in (0,\half)$, any $B \geq 4$, any $\kappa \geq 2$, and any $\bar{m}_2 \geq \tfrac{b-a}{4}$. Another example is that the Cauchy distribution with scale parameter $a$ belongs to $\Fam$ for any $\bar t\in (0,\half)$, any  $B\geq \pi(1+\max(1,\tan(\pi \bar t)^2))$, and $\kappa\geq \sqrt 3 -1$, and any $\bar m_2 \geq a$. We recall that a Cauchy distribution with scale parameter $a>0$ has density $\DS \frac{1}{\pi a}\frac{1}{1+\left(\frac{x-x_0}{a}\right)^2}, x\in\Real$, where $x_0\in \Real$ is a location parameter. For such a distribution, it is easy to check that $m_1(F)=x_0$, $m_2(F)=a$ and $m_4(F)=a(\sqrt 3 -1)$. Note that neither the mean nor the variance exists for the Cauchy distribution.
\end{remark}

\subsubsection{Concentration Results for Estimation of Second Robust Moment}\label{subsubsec:mad:estimate}
We are now ready to present finite-sample guarantees on the speed of convergence of the empirical MAD $\hat{m}_2$ to the underlying distribution's MAD $m_2$ when given contaminated samples. For brevity, proofs are deferred to Appendix~\ref{app:est:mad}.
\par We first present results for the oblivious and prescient adversarial settings. As before, the estimation error decomposes into the sum of two terms: a bias term reflecting the uncertainty the adversary can inject given her contamination level, and a confidence-interval term that shrinks with an optimal $n^{-1/2}$ rate and has sub-Gaussian tails in $\delta$.

\begin{lemma}\label{lem:est-mad}
Let $\tmax \in (0, \half)$, $\eps \in (0, \min(\epsmax(\tmax), \tfrac{1}{B}))$, and $F \in \Fam$. Let $Y_i \sim F$ and $D_i \sim \Ber(\eps) $, for $i \in [n]$, all be drawn independently. Let $\{Z_i\}_{i \in [n]}$ be arbitrary random variables possibly depending on $\{Y_i, D_i\}_{i \in [n]}$, and define $X_i = (1 - D_i)Y_i + D_iZ_i$. Then, for any confidence level $\delta > 0$, error level $E > 0$, and number of samples $n \geq 2 \max\left(
\tfrac{16\kappa^2B^2\sigmedmax^2}{E^2}, \left(\min(\tmax, \tfrac{1}{B}) - \tfrac{\eps}{2(1-\eps)}\right)^{-2}\right) \log \frac{4}{\delta}$,
\[
\Prob\Bigg(
\left| \hatsigmed(X_1, \dots, X_n) - \sigmed(F) \right|
\leq
(1 + 2\kappa)\UncertaintyF + E
\Bigg)
\geq 1 - \delta.
\]
\end{lemma}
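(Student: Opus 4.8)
\textbf{Proof plan for Lemma~\ref{lem:est-mad}.}
The strategy is to reduce MAD estimation to two median estimations ``in series'': first estimate $m_1(F)$ via $\hatmed$, then estimate $m_2(F) = \med(|X - m_1(F)|)$ via the empirical median of the recentered absolute deviations $|X_i - \hatmed|$. The main subtlety is that the recentering uses the \emph{estimated} median $\hatmed$ rather than the true $m_1(F)$, so the ``samples'' fed into the second median estimation are not i.i.d.\ from the clean law of $|X - m_1(F)|$, and moreover they carry both the contamination of the original samples and the estimation error from the first stage. I would therefore run the argument on a single good event of probability at least $1-\delta$ obtained by a union bound over (a) the first-stage median concentration and (b) a second-stage concentration for the empirical median of a contaminated-and-shifted sample.

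\textbf{Step 1 (first-stage median).} Apply Corollary~\ref{corol:est-med} (using that $F \in \Fam \subset \Famnosig$, via property (i) of Definition~\ref{def:fam-mad} and the properties in Lemma~\ref{lem:fam-properties}) to conclude that, with probability at least $1 - \delta/2$, $|\hatmed(X_1,\dots,X_n) - m_1(F)| \le \UncertaintyF + E'$ for an error budget $E'$ to be chosen as a constant fraction of $E$. Here I would split the sample-complexity requirement so that the $E^2$-term is a small constant multiple of $E$; the $(\min(\tmax,1/B) - \tfrac{\eps}{2(1-\eps)})^{-2}$ term in the hypothesis is exactly what is needed so that, after the shift by $\hatmed$, the quantiles of $|X - m_1(F)|$ we must control still lie inside the good neighborhood $[\half \plusminus \tmax] \cup [m_1(F) \plusminus 2m_2(F)]$ from Definition~\ref{def:fam-mad}(i).

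\textbf{Step 2 (reduce the second stage to a contaminated-median problem).} Condition on the good event from Step~1 and write $\hatmed = m_1(F) + \eta$ with $|\eta| \le \UncertaintyF + E'$. Then $|X_i - \hatmed|$ differs from $|X_i - m_1(F)|$ by at most $|\eta|$ in sup-norm, so $\hatmed(|X_1-\hatmed|,\dots)$ differs from $\hatmed(|X_1 - m_1(F)|,\dots)$ by at most $|\eta|$. It remains to control $\hatmed(|X_1 - m_1(F)|,\dots,|X_n - m_1(F)|)$ around $m_2(F) = \med(|Y - m_1(F)|)$ for $Y \sim F$. The key observation is that $|X_i - m_1(F)|$ is a \emph{contaminated} sample from the law of $|Y - m_1(F)|$: whenever $D_i = 0$ it equals $|Y_i - m_1(F)|$, and otherwise it is an arbitrary (adversarially correlated) value. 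Hence I can invoke the general contaminated-median concentration (Lemma~\ref{lem:est-med-R}) applied to the base distribution $F^{\mathrm{abs}}$ of $|Y - m_1(F)|$, with the appropriate modulus-of-continuity function $R$ inherited from Definition~\ref{def:fam-mad}(i); this requires checking that $F^{\mathrm{abs}} \in \Famquantile$ for a linear $R$, which follows because $F$ satisfies~\eqref{eq:def-Fam} on the enlarged set $[m_1(F) \plusminus 2m_2(F)]$ and folding a cdf at its median only increases slopes. This step yields, with probability at least $1-\delta/2$, that $|\hatmed(|X_1-m_1(F)|,\dots) - m_2(F)| \le 2Bm_2(F)(\tfrac{\eps}{2(1-\eps)} + O(\sqrt{\log(1/\delta)/n}))$ or so; being careful, the factor $m_4(F)$ in the slope bound and the relation $m_2(F) \le \kappa m_4(F)$ from Definition~\ref{def:fam-mad}(iii) is what converts a ``$\kappa$-free'' bound into the stated $(1+2\kappa)\UncertaintyF$ after also accounting for $m_2(F) \le \sigmedmax$.

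\textbf{Step 3 (combine).} On the intersection of the two good events (probability $\ge 1-\delta$ by a union bound; the constant $4$ rather than $2$ inside the $\log$ absorbs the two-sided $\delta/2$ splits plus the $\log(2/\delta)$ and $\log(3/\delta)$ constants from the invoked lemmas), add the bounds from Steps~1--2:
\[
|\hatsigmed(X_1,\dots,X_n) - m_2(F)| \le |\eta| + |\hatmed(|X_1 - m_1(F)|,\dots) - m_2(F)| \le (1+2\kappa)\UncertaintyF + E,
\]
after choosing the error budgets $E'$ and the second-stage budget each to be a suitable constant fraction of $E$ (e.g.\ $\tfrac12 E$ and using that $\UncertaintyF = Bm_2(F)\tfrac{\eps}{2(1-\eps)} \le Bm_2(F)\tfrac{\eps}{2(1-\eps)}$ and $m_2 \le \sigmedmax$ give the coefficient $16\kappa^2 B^2 \sigmedmax^2/E^2$ in the sample size).

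\textbf{Main obstacle.} The delicate point is Step~2: verifying that the recentered-and-contaminated sample $\{|X_i - m_1(F)|\}$ genuinely fits the hypotheses of a contaminated-median lemma \emph{and} that the relevant quantiles of $|Y - m_1(F)|$ that get perturbed land inside the region where Definition~\ref{def:fam-mad}(i) gives us slope control — this is precisely why the definition enlarges the controlled set to include $[m_1(F) \plusminus 2m_2(F)]$ and why the sample-complexity hypothesis features $\min(\tmax, 1/B)$ and the extra $\eps < 1/B$ constraint. Tracking these quantile-range requirements, and propagating the first-stage error $|\eta|$ through the folding map without losing the linear modulus, is the part that needs genuine care; the rest is bookkeeping with constants.
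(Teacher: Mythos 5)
Your proposal is correct and follows essentially the same route as the paper's proof: the shift-by-$\hat m_1$ step is handled by the $1$-Lipschitz property of the empirical MAD (the paper's Lemma~\ref{lem:med-lipschitz-helper}), the folded distribution $H$ of $|Y-m_1(F)|$ is shown to lie in $\mathcal{F}_{\bar t_H, \kappa B}$ (item 8 of Lemma~\ref{lem:fam-properties}), Corollary~\ref{corol:est-med} is applied once to $F$ and once to $H$ with contaminations $|Z_i - m_1(F)|$, and the coefficient $2\kappa$ comes from $U_{\eps,\kappa B, m_2(H)}$ combined with $m_2(H)=m_4(F)\le 2m_2(F)$. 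The only minor imprecision is in your accounting of where $\kappa$ enters: Definition~\ref{def:fam-mad}(iii) is used to establish the slope constant $\kappa B$ for $H$, while the separate bound $m_4(F)\le 2m_2(F)$ (the paper's Lemma~\ref{lem:bound-2-4}) supplies the factor $2$.
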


Similar to median estimation above, it is possible to estimate the MAD even in the malicious adversarial setting. 

\begin{lemma}\label{lem:est-mad-malicious}
Let $\tmax \in (0, \half)$, $\eps \in (0, \min(\tmax, \tfrac{1}{B}))$, and $F \in \Fam$. Let $(Y_i,D_i)$, for $i \in [n]$, be drawn independently with marginals $Y_i \sim F$ and $D_i \sim \Ber(\eps)$. Let $\{Z_i\}_{i \in [n]}$ be arbitrary random variables possibly depending on $\{Y_i, D_i\}_{i \in [n]}$, and define  $X_i = (1 - D_i)Y_i + D_iZ_i$. Then, for any confidence level $\delta > 0$, error level $E > 0$, and number of samples $n \geq 2 \max\left(
\tfrac{16\kappa^2B^2\sigmedmax^2}{E^2}, \left(\min(\tmax, \tfrac{1}{B}) - \eps\right)^{-2}\right) \log \frac{6}{\delta}$,
\[
\Prob\Bigg(
\left| \hatsigmed(X_1, \dots, X_n) - \sigmed(F) \right|
\leq
(1 + 2\kappa)\UncertaintyFprescient + E
\Bigg)
\geq 1 - \delta.
\]
\end{lemma}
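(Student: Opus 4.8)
The plan is to mirror the proof of Lemma~\ref{lem:est-mad} for the oblivious/prescient case, substituting in the malicious-adversary median guarantee (Corollary~\ref{corol:est-med-malicious} / Lemma~\ref{lem:est-med-malicious-R}) wherever the oblivious one was used. The empirical MAD is $\hatsigmed = \hatmed(|X_1 - \hatmed|, \dots, |X_n - \hatmed|)$, so the error naturally splits into (a) the error incurred because the \emph{center} $\hatmed$ we subtract is not exactly $\med(F)$, and (b) the error in taking the empirical median of the $|X_i - \med(F)|$ values as an estimate of $m_2(F) = \med(|Y - \med(F)|)$. For (a), I would first note that $\bigl| |X_i - \hatmed| - |X_i - \med(F)| \bigr| \le |\hatmed - \med(F)|$ by the triangle inequality, so shifting the center perturbs every sample — and hence the empirical median of those samples — by at most $|\hatmed - \med(F)|$. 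Thus it suffices to control $\hatmed(|X_1 - \med(F)|, \dots, |X_n - \med(F)|)$ and then add $|\hatmed - \med(F)|$, which by Lemma~\ref{lem:est-med-malicious-R} (applied with $F \in \Famquantile$ for $R(t) = tBm_2(F)$, valid since $F \in \Fam$ and assumption (i) of Definition~\ref{def:fam-mad} covers the needed quantile window) is at most $\UncertaintyFprescient + E/2$ with probability $1 - \delta/2$, after suitably adjusting constants and the sample-size threshold.

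For (b), the random variables $W_i := |X_i - \med(F)|$ are contaminated samples of $|Y - \med(F)|$: with probability $1-\eps$, $W_i = |Y_i - \med(F)| \sim F^{(2)}$ (the law of $|Y - \med(F)|$, $Y \sim F$), and with the remaining probability $W_i$ is some arbitrary value determined by $Z_i$, which under the malicious adversary may be coupled with $D_i$ and hence with $Y_i$. So this is exactly the setting of Lemma~\ref{lem:est-med-malicious-R} applied to the distribution $F^{(2)}$, provided $F^{(2)}$ lies in an appropriate $\Famquantile$-type class near its own median $m_2(F)$. This is where assumptions (i) (quantile control on the window $[m_1(F) \plusminus 2m_2(F)]$, which maps under $x \mapsto |x - m_1(F)|$ onto the window $[0, 2m_2(F)] \ni m_2(F)$) and (iii) ($m_2(F) \le \kappa m_4(F)$, giving the ``non-flatness'' of $F^{(2)}$ at its median in terms of its own MAD $m_4(F)$) come in: together they yield that $F^{(2)}$ satisfies a linear-growth bound of the form $|F^{(2)}(x_1) - F^{(2)}(x_2)| \ge c|x_1-x_2|$ near $m_2(F)$, with the relevant ``$Bm_2$'' quantity being on the order of $\kappa B m_4(F)$ or $2\kappa B \sigmedmax$. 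Lemma~\ref{lem:est-med-malicious-R} then gives, with probability $1-\delta/2$, that $\hatmed(W_1,\dots,W_n)$ is within $2\kappa\,\UncertaintyFprescient + E/2$ of $m_2(F)$, once $n$ exceeds the stated threshold (the $\tfrac{16\kappa^2 B^2 \sigmedmax^2}{E^2}$ term is the analogue of $\tfrac{B^2 m_2^2}{E^2}$ scaled by the $\kappa$ and factor-of-$2$ blowups, and the $(\min(\tmax,1/B) - \eps)^{-2}$ term is the width-of-the-identifiable-region constraint, where $1/B$ enters because the window $[m_1 \plusminus 2m_2]$ can only be guaranteed to contain quantiles out to roughly level $\half + \tfrac{1}{B}$ by the linear-growth bound~\eqref{eq:def-Fam}).

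Combining (a) and (b) by a union bound over the two $\delta/2$ events, and using that $\Uncertaintygenprescient = Bm_2\eps$ so the two bias contributions sum to $(1 + 2\kappa)\UncertaintyFprescient$, gives the claimed inequality with the final $E$ obtained by summing the two $E/2$ terms. The sample-size requirement $n \ge 2\max(\cdot,\cdot)\log\tfrac{6}{\delta}$ follows by taking the worse of the two thresholds and noting $\log\tfrac{6}{\delta} = \log\tfrac{3}{\delta/2}$, matching the $\tfrac{3}{\delta}$ form that appears in the malicious median lemma. The main obstacle — and the step requiring the most care — is verifying that $F^{(2)}$, the pushforward of $F$ under $x \mapsto |x - m_1(F)|$, actually inherits a quantitative ``non-flatness'' bound near $m_2(F)$ of the right order from assumptions (i) and (iii) of Definition~\ref{def:fam-mad}, and correctly tracking how the constants $B$, $\kappa$, $\sigmedmax$ and the cutoff $\min(\tmax, 1/B)$ propagate through the two-sided absolute-value transformation; the malicious coupling itself introduces no new difficulty beyond what Lemma~\ref{lem:est-med-malicious-R} already absorbs, since that lemma already permits $\{Z_i\}$ (and hence the contaminated part of $W_i$) to depend arbitrarily on $\{Y_i, D_i\}$.
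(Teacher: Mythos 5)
Your proposal is correct and follows essentially the same route as the paper: the paper proves the oblivious/prescient version (Lemma~\ref{lem:est-mad}) by exactly your decomposition --- a Lipschitz/triangle-inequality step to reduce to the centered samples $|X_i - \med(F)|$, plus an application of the median-estimation corollary to the pushforward distribution $H$ of $|Y-\med(F)|$ (whose membership in the right class, with parameters $\kappa B$ and $m_2(H)=m_4(F)\le 2m_2(F)$, is your "non-flatness of $F^{(2)}$" step, handled in the paper by item 8 of Lemma~\ref{lem:fam-properties} and Lemma~\ref{lem:bound-2-4}) --- and then obtains the malicious version by swapping Corollary~\ref{corol:est-med} for Corollary~\ref{corol:est-med-malicious}, exactly as you do. Your accounting of the bias terms, the $\log\tfrac{6}{\delta}$ factor, and the observation that the malicious coupling is already absorbed by Lemma~\ref{lem:est-med-malicious-R} all match the paper's argument.
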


\subsection{Algorithmic Guarantees}\label{subsec:cbai-quantile}
We now apply these statistical results for MAD estimation to show that the \CBAIspace algorithms presented earlier in Subsection~\ref{subsec:cbai-pac} -- with no additional modifications -- can also provide quality guarantees for the arm they select. That is, we show that in addition to outputting the best (or an approximately best) arm, these algorithms also provide quality guarantees to a certain precision ``for free'' without needing extra samples.
\par For simplicity, we will focus on error guarantees for Algorithm~\ref{alg:Simple}. A nearly identical (yet technically hairier) argument yields a similar guarantee for our adaptation of the Successive Elimination algorithm for \CBAIspace (Algorithm~\ref{alg:SuccElimCBAI}).
\par We will make the assumption that the arms distributions are in $\Fam$ so that the MAD estimation results proved above will apply. The parameters of this family are assumed to be known to the algorithm beforehand. Distributions in $\Fam$ have the following property that their lower tails are controlled by their median and MAD.
\begin{lemma}\label{lem:fam-guarantees}
Let $F \in \Fam$ and $Y \sim F$. Then simultaneously for all $t \in [0, \tmax]$,
\[
\Prob\Big(
Y \geq m_1(F) - tBm_2(F)
\Big)
\geq \half + t.
\]
\end{lemma}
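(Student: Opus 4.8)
The claim in Lemma~\ref{lem:fam-guarantees} is a statement purely about a single distribution $F \in \Fam$, with no contamination or randomness beyond $Y \sim F$, so the plan is to unwind the definitions of the family $\Fam$ (Definition~\ref{def:fam-mad}) and of the quantile functions. Fix $t \in [0,\tmax]$ and write $m := m_1(F)$ and $\sigma := m_2(F)$ for brevity. The goal is to show $F^-(m - tB\sigma) \le \tfrac12 - t$, where $F^-$ denotes the left limit, equivalently that the point $x_t := m - tB\sigma$ satisfies $\Prob(Y < x_t) \le \tfrac12 - t$, since then $\Prob(Y \ge x_t) = 1 - \Prob(Y < x_t) \ge \tfrac12 + t$.

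First I would handle the boundary case $t = 0$, which is immediate since $\Prob(Y \ge m) \ge \half$ by definition of the median (using that $m$ is \emph{a} median and in fact the unique one for $F \in \Fam$). For $t \in (0, \tmax]$, the key structural fact I would use is the linear-growth lower bound~\eqref{eq:def-Fam}: for all $x_1, x_2$ in the interval $\left[\QLF(\half - \tmax), \QRF(\half + \tmax)\right]$ (this is the relevant sub-interval among the union in part (i) of Definition~\ref{def:fam-mad}), we have $|F(x_1) - F(x_2)| \ge \tfrac{1}{B\sigma}|x_1 - x_2|$. I would first argue that $x_t = m - tB\sigma$ lies inside this interval: by Corollary~\ref{corol:change-med-R} / the definition of $R$ for $\Famnosig$ (recall $F \in \Fam \subseteq \Famnosig$ gives $R(s) = sB\sigma$), we have $m - \QLF(\half - t) \le tB\sigma \le \tmax B \sigma$, hence $\QLF(\half - \tmax) \le \QLF(\half - t) \le x_t \le m$; combining with $m \le \QRF(\half+\tmax)$ confirms $x_t$ is in the interval. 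Now apply~\eqref{eq:def-Fam} with the pair $x_1 = x_t$ and $x_2 = m$: this gives $|F(m) - F(x_t)| \ge \tfrac{1}{B\sigma}(m - x_t) = \tfrac{1}{B\sigma}\cdot tB\sigma = t$. Since $F$ is non-decreasing and $x_t \le m$, we get $F(x_t) \le F(m) - t$. Finally, because $m$ is the (unique) median, $F(x_t) = \Prob(Y \le x_t) \le \Prob(Y \le m) - t$...

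Here is the one subtlety I would be careful about, and it is the main obstacle: the inequality I actually need concerns $\Prob(Y < x_t)$ (a strict inequality, so that the complement $\Prob(Y \ge x_t)$ picks up the right mass), whereas~\eqref{eq:def-Fam} is stated in terms of the cdf values $F(\cdot) = \Prob(Y \le \cdot)$. The cleanest fix is to note that for any $s < t$, the point $x_s = m - sB\sigma$ also lies in the interval and strictly exceeds $x_t$, so applying~\eqref{eq:def-Fam} to $x_t$ and $x_s$ gives $F(x_s) \ge F(x_t) + \tfrac{1}{B\sigma}(x_s - x_t) = F(x_t) + (t - s)$; but actually the slicker route is to directly bound $\Prob(Y < x_t) \le F^-(x_t)$ and observe $F^-(x_t) \le F(x_s)$ for every $s<t$, while~\eqref{eq:def-Fam} applied to $x_s$ and $m$ yields $F(x_s) \le F(m) - s \le \Prob(Y\le m) - s$. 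Using $\Prob(Y \le m) \le \half + \Prob(Y = m)$ is not quite clean either, so instead I would invoke the left quantile: since $m - sB\sigma \le \QLF(\half - s)$ is impossible to get directly, I would instead argue $F(x_t) \le \half - t$ outright from $m - tB\sigma \le \QLF(\half - t)$ (which is exactly the content of $R(t) = tB\sigma \ge m - \QLF(\half - t)$ from Definition~\ref{def:fam-quantile} applied to $\Famquantile$), together with the definition $\QLF(p) = \inf\{x : F(x) \ge p\}$ which forces $F(y) < \half - t$ for all $y < \QLF(\half - t)$ — hence $F(x_t) \le \half - t$ when $x_t < \QLF(\half-t)$, and a limiting argument (take $t' \downarrow t$) handles equality. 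Then $\Prob(Y \ge x_t) = 1 - \Prob(Y < x_t) \ge 1 - F(x_t^-) \ge 1 - (\half - t) = \half + t$, where $F(x_t^-) \le F(x_t) \le \half - t$. The only real work is thus the careful quantile/left-limit bookkeeping; the linear-growth hypothesis enters only to guarantee $x_t$ sits in the controlled interval so that the quantile bound $m - \QLF(\half - t) \le tB\sigma$ applies, which is precisely what makes $F$ in this family (as opposed to an arbitrary distribution) the right setting.
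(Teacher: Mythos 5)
Your proposal is correct and follows essentially the same route as the paper: the whole content is that $\QLF(\half - t) \geq m_1(F) - tBm_2(F)$, which the paper gets in one line from item 6 of Lemma~\ref{lem:fam-properties} (Lipschitzness of the quantile function with constant $Bm_2(F)$) and which you get equivalently from the membership $F \in \Famquantile$ with $R(t) = tBm_2(F)$. Your extra care with left limits (bounding $\Prob(Y < x_t)$ rather than $F(x_t)$, which as you note can fail to be $\leq \half - t$ exactly at $x_t = \QLF(\half - t)$) is legitimate bookkeeping that the paper's terser proof glosses over, but it does not change the argument.
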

\begin{proof}
By item (6) of Lemma~\ref{lem:fam-properties}, we have that for all $t \in [0, \tmax]$, $t = \half - (\half - t) \geq \tfrac{1}{Bm_2(F)} (F^{-1}(\half) - F^{-1}(\half - t))$. Rearranging yields $F^{-1}(\half - t) \geq m_1(F) - Bm_2(F)t$ and completes the proof.
\end{proof}

With this lemma in hand, we turn to guarantees for the arm returned by Algorithm~\ref{alg:Simple}. The following guarantees hold for all adversarial settings since the adversarial strength is encapsulated in the definition of the unavoidable uncertainty terms $U_i$.

\begin{theorem}\label{thm:guarantee-selected}
Let $F_1,\dots,F_k \in \Fam$. Let $\eps \in (0, \min(\frac{2\tmax}{1+2\tmax}, \tfrac{1}{B}))$ and $\bar{U} := U_{\eps, B, \bar{m}_2}$ if the adversary is oblivious or prescient, or let $\eps \in (0, \min(\tmax, \tfrac{1}{B}))$ and $\bar{U} := U_{\eps,B,\bar{m}_2}^{\textsc{(malicious)}}$ if the adversary is malicious.
Consider using Algorithm~\ref{alg:Simple} for \CBAIspace exactly as in Theorem~\ref{Thm:Simple}. Let $\hat{I}$ denote the arm it outputs, let $Y$ be a random variable whose conditional distribution on $\hat{I}$ is $F_{\hat{I}}$, and let $\hat{m_1}$ and $\hat{m_2}$ denote the empirical median and MAD, respectively, from the samples it has seen from $F_{\hat{I}}$. Then, simultaneously for all $t \in [0, \tmax]$,
\[
\Prob \Big(
Y \geq
\left(\hat{m}_1 - tB\hat{m}_2\right) - \left((\half + 2\kappa tB) \alpha + (1 + (1+2\kappa)Bt) \bar{U} \right)
\Big)
\geq
\half + t - \tfrac{3 \delta}{k}.
\]
\end{theorem}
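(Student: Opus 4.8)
The plan is to combine three high-probability events coming from Algorithm~\ref{alg:Simple}: (a) the median estimate $\hat m_1$ of the selected arm $\hat I$ is close to $m_1(F_{\hat I})$; (b) the MAD estimate $\hat m_2$ of the selected arm is close to $m_2(F_{\hat I})$; and (c) the ``population'' tail guarantee of Lemma~\ref{lem:fam-guarantees} applied to $F_{\hat I}$. First I would recall that Algorithm~\ref{alg:Simple} pulls each arm $n_{\alpha/2,\delta/k}$ times, so Corollary~\ref{corol:est-med} (oblivious/prescient) or Corollary~\ref{corol:est-med-malicious} (malicious) gives, for each fixed arm $i$, $|\hat m_1^{(i)} - m_1(F_i)| \le \bar U + \tfrac{\alpha}{2}$ with probability $\ge 1 - \delta/k$, and Lemma~\ref{lem:est-mad} or Lemma~\ref{lem:est-mad-malicious} gives $|\hat m_2^{(i)} - m_2(F_i)| \le (1+2\kappa)\bar U + \tfrac{\alpha}{2}$ with probability $\ge 1 - \delta/k$ (here one checks that the error level $E=\alpha/2$ and sample count $n_{\alpha/2,\delta/k}$ are compatible with the hypotheses of those lemmas, using $m_2(F_i)\le\bar m_2$ and monotonicity of $U$ in its last argument). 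Since these bounds hold for every fixed $i$, in particular they hold for the (data-dependent) index $\hat I$ on the intersection of these events; a union bound over the at-most-two relevant events for arm $\hat I$ — really over all $k$ arms to be safe — costs $2\delta/k$, and the tail event (c) below costs another $\delta/k$, for a total slack of $3\delta/k$, matching the statement.

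Next, on the event where (a), (b), and (c) all hold for arm $\hat I$, I would chain the inequalities. Fix $t\in[0,\tmax]$. By Lemma~\ref{lem:fam-guarantees} applied to $F_{\hat I}$, with probability $\ge \half + t$ over $Y$ we have $Y \ge m_1(F_{\hat I}) - tB\, m_2(F_{\hat I})$. Now replace the true moments by empirical ones: from (a), $m_1(F_{\hat I}) \ge \hat m_1 - \bar U - \tfrac\alpha2$; from (b), $m_2(F_{\hat I}) \le \hat m_2 + (1+2\kappa)\bar U + \tfrac\alpha2$, and since the coefficient $tB \ge 0$ multiplies $-m_2(F_{\hat I})$ we get $-tB\,m_2(F_{\hat I}) \ge -tB\hat m_2 - tB(1+2\kappa)\bar U - tB\tfrac\alpha2$. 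Adding these,
\[
Y \ge \hat m_1 - tB\hat m_2 - \Bigl(\tfrac\alpha2 + tB\tfrac\alpha2\Bigr) - \Bigl(\bar U + tB(1+2\kappa)\bar U\Bigr)
= (\hat m_1 - tB\hat m_2) - \Bigl(\bigl(\tfrac12 + \tfrac{tB}{2} + 2\kappa tB - 2\kappa tB\bigr)\alpha + \bigl(1 + (1+2\kappa)Bt\bigr)\bar U\Bigr),
\]
and I would just reorganize the $\alpha$-coefficient to read $\half + 2\kappa tB$ as written in the theorem — here one must double-check the exact constant bookkeeping, since the claimed coefficient $\half + 2\kappa t B$ must come out of $\tfrac12(1+tB)$ plus whatever cross-term arises; most likely the intended accounting pairs the $tB\cdot\tfrac\alpha2$ term from the MAD slack differently, but in any case the displayed bound is an \emph{upper} bound on the subtracted quantity and hence still valid, since $\half + 2\kappa tB \ge \half + \tfrac{tB}{2}$ whenever $\kappa \ge \tfrac14$ (and one can absorb the borderline case into the constants). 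Finally, the probability accounting: the bound on $Y$ holds whenever event (c) succeeds (probability $\ge \half + t$ conditionally on $\hat I$ and the samples) \emph{and} events (a),(b) hold (probability $\ge 1 - 2\delta/k$), so unconditionally $\Prob(\cdot) \ge (\half + t) - 2\delta/k$; to land at $\half + t - 3\delta/k$ one leaves a further $\delta/k$ of slack, which is harmless.

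The main obstacle I anticipate is purely the constant-tracking in the $\alpha$- and $\bar U$-coefficients — making sure the error levels fed into Corollaries~\ref{corol:est-med}/\ref{corol:est-med-malicious} and Lemmas~\ref{lem:est-mad}/\ref{lem:est-mad-malicious} are each $\alpha/2$ (so that together they contribute at most the stated amounts), and verifying that the sample count $n_{\alpha/2,\delta/k}$ used by Algorithm~\ref{alg:Simple} in fact exceeds \emph{both} the median-estimation and MAD-estimation sample thresholds at confidence $\delta/k$ and error $\alpha/2$ (this uses the extra $\tmax$- and $\tfrac1B$-dependent floor terms in those lemmas, which are constants not affecting the asymptotic complexity, exactly as remarked before Theorem~\ref{Thm:SimpleCBAI}). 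A secondary subtlety is the ``for all $t\in[0,\tmax]$ simultaneously'' quantifier: Lemma~\ref{lem:fam-guarantees} already provides this simultaneity at the population level for a fixed distribution, and events (a),(b) do not involve $t$ at all, so no additional union bound over $t$ is needed — the only place $t$ enters the final probability is through the $\half+t$ from Lemma~\ref{lem:fam-guarantees}, and that simultaneity is inherited directly. Everything else is a routine triangle-inequality chain.
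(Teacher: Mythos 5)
Your overall architecture is exactly the paper's: estimate the median and the MAD of the selected arm from the $n_{\alpha/2,\delta/k}$ samples Algorithm~\ref{alg:Simple} already took, chain the two estimation errors through $m_1(F_{\hat I}) - tBm_2(F_{\hat I})$, and finish with Lemma~\ref{lem:fam-guarantees} plus a union bound (the $t$-simultaneity indeed comes for free from Lemma~\ref{lem:fam-guarantees}, as you say). The gap is precisely the constant-tracking step you flagged, and your proposed patch does not close it. You feed error level $E=\alpha/2$ into Lemma~\ref{lem:est-mad}, but that lemma's sample requirement scales as $16\kappa^2B^2\bar{m}_2^2/E^2$, whereas the algorithm only provides $2\max\bigl(4B^2\bar{m}_2^2/\alpha^2,\cdot\bigr)\log(\cdot)$ samples; so $E=\alpha/2$ is admissible only when $64\kappa^2\le 4$, i.e.\ $\kappa\le\tfrac14$. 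Your fallback — that the claimed coefficient $\tfrac12+2\kappa tB$ dominates your derived coefficient $\tfrac12+\tfrac{tB}{2}$ — holds only when $\kappa\ge\tfrac14$. These two conditions are complementary, so in neither regime does your argument establish the stated bound: for $\kappa<\tfrac14$ your inequality runs the wrong way (the theorem claims a \emph{stronger} lower bound on $Y$ than you proved), and for $\kappa>\tfrac14$ your MAD estimate at level $\alpha/2$ is not justified by the available samples.

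The fix, which is what the paper does, is to choose the MAD error level to match the sample budget exactly: set $E=2\kappa\alpha$, since $16\kappa^2B^2\bar{m}_2^2/(2\kappa\alpha)^2 = 4B^2\bar{m}_2^2/\alpha^2$, and run the MAD event at confidence $2\delta/k$ so that $\log\tfrac{4}{\delta'}=\log\tfrac{2k}{\delta}$ matches the algorithm's log factor. This yields $m_2(F_{\hat I})\le \hat{m}_2+(1+2\kappa)\bar{U}+2\kappa\alpha$ alongside $m_1(F_{\hat I})\ge \hat{m}_1-\bar{U}-\tfrac{\alpha}{2}$, on an event of probability at least $1-\tfrac{3\delta}{k}$ ($\delta/k$ for the median plus $2\delta/k$ for the MAD), and the $\alpha$-coefficient then comes out as $\tfrac12 + tB\cdot 2\kappa = \tfrac12+2\kappa tB$ exactly as stated — no case split on $\kappa$ needed. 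With that substitution the rest of your chain is correct.
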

\begin{proof}
By definition of Algorithm~\ref{alg:Simple}, $\hat{I}$ is sampled $n_{\alpha/2,\delta/k} =2 \max(\tfrac{4B^2\bar m_2^2}{\alpha^2}, \,  (\tmax - \tfrac{\eps}{2(1-\eps)})^{-2} ) \log \tfrac{2k}{\delta}$ times against oblivious and prescient adversaries, and $n_{\alpha/2,\delta/k} =2 \max\left(\tfrac{4B^2\bar m_2^2}{\alpha^2}, \,  \left(\tmax - \eps\right)^{-2} \right)  \log\tfrac{3k}{\delta}$ times against malicious adversaries. Therefore -- by Corollary~\ref{corol:est-med} and Lemma~\ref{lem:est-mad} for oblivious and prescient adversaries, or similarly by Corollary~\ref{corol:est-med-malicious} and Lemma~\ref{lem:est-mad-malicious} for malicious adversaries -- we have by a union bound that with probability at least $1 - \tfrac{3\delta}{k}$, both of the following inequalities hold:
\begin{align*}
m_1(F_{\hat{I}}) &\geq \hat{m}_1 - \bar{U} - \tfrac{\alpha}{2}, \; \text{ and}\\
m_2(F_{\hat{I}}) &\leq \hat{m}_2 + (1 + 2\kappa) \bar{U} + 2\kappa \alpha.
\end{align*}
Whenever this event occurs, we have
\[
m_1(F_{\hat{I}}) - tBm_2(F_{\hat{I}}) \geq \left(\hat{m}_1 - tB\hat{m}_2\right) - \left((\half + 2\kappa t B)\alpha + (1 + (1+2\kappa)Bt) \bar{U} \right).
\]
Applying Lemma~\ref{lem:fam-guarantees} and taking a union bound completes the proof.
\end{proof}
Note that the bound inside the probability term in Theorem~\ref{thm:guarantee-selected} can be far less conservative than the crude bound $\hat{m}_1 - tB\bar{m}_2$ if $\eps$ and $\alpha$ are small.



\section{Conclusion}\label{sec:conclusion}
In this paper, we proposed the Best Arm Identification problem for contaminated bandits (\CBAI). This setup can model many practical applications that cannot be modeled by the classical bandit setup. On the way to efficient algorithms for \CBAI, we developed tight, non-asymptotic sample-complexity bounds for estimation of the first two robust moments (median and median absolute deviation) from contaminated samples. These results may be of independent interest, perhaps as ingredients for adapting other online learning techniques to similar contaminated settings.

We formalized the contaminated bandit setup as a special case of the more general partially identifiable bandit problem (\PIBAI), and presented ways to adapt celebrated Best Arm Identification algorithms for the classical bandit setting to this \PIBAIspace problem. The sample complexity is essentially changed only by replacing the suboptimality ``gaps'' with suboptimality ``effective gaps'' to adjust for the challenge of partial identifiability. We then showed how these algorithms apply to the special case of {\CBAIspace} by making use of the aforementioned guarantees for estimating the median from contaminated samples. We complemented these results with nearly matching information-theoretic lower bounds on the sample complexity of \CBAI, showing that (up to a small logarithmic factor) our algorithms are optimal. This answers the question this paper set out to solve: determining the complexity of finding the arm with highest median given contaminated samples. Finally, we used our statistical results on estimation of the second robust moment from contaminated samples to show that without additional samples, our algorithms can also output quality guarantees on the selected arm.

Our paper suggests several potential directions for future work.
\begin{itemize}
\item Our upper and lower bounds on the sample complexity of \CBAIspace are off by a logarithmic factor in the number of arms. It is an interesting open question to close this gap, especially since many of the tricks for doing this in the classical \BAIspace setup do not seem to extend to this partially identifiable setting (see Remark~\ref{rem:adapt}). One concrete possibility is to obtain a tighter upper bound by adapting the algorithm of~\citep{jamieson2014lil} to the \CBAIspace problem. This would require proving a non-asymptotic version of the Law of the Iterated Logarithm for empirical medians, which would be of independent interest.
\item We have developed algorithms for online learning in the presence of partial identifiability. How far does this toolkit extend? In particular, do our results apply to more complicated or more general feedback structures such as partial monitoring (see e.g.~\citep{bartok2014partial}) or graph feedback (see e.g.~\citep{alon2017nonstochastic})?
\item The contaminated bandit setup models many real-world problems that cannot be modeled by the classical bandit setup. Is it applicable and approachable to formulate other classical online-learning problems in similar contamination setups?
\item More abstractly, we think that problems at the intersection of online learning and robust statistics are not only mathematically rich, but also are increasingly relevant, given the recent influx of active learning tasks with data that are not completely trustworthy. It may be valuable to use techniques from one of the fields to approach problems in the other, as we did here.
\end{itemize}

\paragraph*{Acknowledgements}
We thank Marco Avella Medina and Philippe Rigollet for helpful discussions. JA is supported by NSF Graduate Research Fellowship 1122374.

\newpage

\bibliography{bandits}

\appendix

\section{Properties of $\Famnosig$ and $\Fam$}\label{app:prop-fam}
The following lemma lists some simple properties of $\Famnosig$ (defined in Definition~\ref{def:2-param}) and $\Fam$ (defined in Definition~\ref{def:fam-mad}), which we use often throughout the paper.
\par For shorthand, we denote $I_{F,\tmax} :=  \left[\QLF(\half - \tmax), \QRF(\half + \tmax)\right]$ for a distribution $F$ and a real number $\tmax \in (0, \half)$. 

\begin{lemma}\label{lem:fam-properties}
If $F \in \Famnosig$, then:
\begin{enumerate}
\item For any $a\neq 0$ and $b\in\Reals$, the distribution $F(a\cdot + b)$ is also in $\calF_{B,\tmax}$.
\item $F$ is strictly monotonically increasing in $I_{F, \tmax}$.
\item $\QLF(t)=\QRF(t)$, for all $t\in F(I_{F,\tmax}) = [\half \plusminus \tmax]$.
\item $\QLF(F(x)) = x$ for all $x \in I_{F, \tmax}$ and we write $F^{-1}$ for $\QLF = \QRF$.
\item The left and right quantiles of $F$ are equal in the interval $F(I_{F, \tmax}) = [\half -\tmax, \half + \tmax]$.
\item For any $u_1, u_2 \in F(I_{F, \tmax}) =[\half -\tmax, \half + \tmax]$,
\[
|u_1 - u_2| \geq \frac{1}{B\sigmed(F)} |F^{-1}(u_1) - F^{-1}(u_2)|.
\]
\end{enumerate}
Moreover if we also have $F \in \Fam$, then:
\begin{enumerate}
\item[7.] For any $a \neq 0$ and $b\in\Reals$, the distribution $F(a\cdot + b)$ is in $\calF_{B, \tmax, a\sigmedmax, \kappa}$.
\item[8.] Let $H$ be the distribution of $|Y-m|$, where $Y \sim F$. Then $H \in \mathcal{F}_{\bar{t}_H, B_H}$, where
$B_H := \kappa B$ and $\bar{t}_H := \min(\half, \tfrac{2}{B})$.
\end{enumerate}
\end{lemma}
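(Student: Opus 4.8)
The plan is to dispatch the six properties of $\Famnosig$ (items 1--6) first --- nearly all of which are direct consequences of the non-flatness inequality~\eqref{eq:def-Fam} --- and then handle the two $\Fam$ properties (items 7--8), of which item 8 will be the real work.

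First I would prove item 2: for $x_1 \neq x_2$ in $I_{F,\tmax}$, \eqref{eq:def-Fam} gives $F(x_1)\neq F(x_2)$, and since a cdf is non-decreasing this means $F$ is strictly increasing on $I_{F,\tmax}$. Items 3, 4, 5 then follow from standard facts about quantile functions of functions that are strictly increasing on an interval: strict monotonicity precludes a flat level set, so $\QLF(t)=\QRF(t)$ whenever the corresponding left quantile lies in $I_{F,\tmax}$; and one checks using monotonicity of quantiles that $t\in[\half\plusminus\tmax]$ forces $\QLF(t)\in I_{F,\tmax}$, after which, writing $F^{-1}$ for the common value, $F^{-1}(F(x))=x$ on $I_{F,\tmax}$ and $F(F^{-1}(u))=u$ on its image. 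For item 6, given $u_1,u_2$ in that image I would set $x_j:=F^{-1}(u_j)\in I_{F,\tmax}$, so $F(x_j)=u_j$, and read \eqref{eq:def-Fam} as $|u_1-u_2|=|F(x_1)-F(x_2)|\ge\tfrac{1}{B\sigmed(F)}|F^{-1}(u_1)-F^{-1}(u_2)|$.

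For items 1 and 7, the idea is that both sides of \eqref{eq:def-Fam} are covariant under affine reparametrizations: a translation changes neither $\sigmed$ nor the length of $I_{F,\tmax}$, while a scaling by $a$ multiplies $\sigmed(F)$, the length of $I_{F,\tmax}$, and $|x_1-x_2|$ by the common factor $|a|$, so \eqref{eq:def-Fam} is preserved verbatim; the case $a<0$ is handled by first checking that the reflected law (with cdf $x\mapsto 1-F((-x)^-)$) still satisfies \eqref{eq:def-Fam} with the same $B$ and $\sigmed$, since reflection maps $I_{F,\tmax}$ to the reflected interval and preserves both the MAD and the absolute increments of the cdf. Item 7 runs the same computation through Definition~\ref{def:fam-mad}: condition (i) also constrains $[m_1(F)\plusminus 2m_2(F)]$, which transforms covariantly; condition (ii) rescales the MAD bound, accounting for the parameter $a\sigmedmax$; and condition (iii) is scale- and translation-invariant because $m_4(F)$ scales exactly as $m_2(F)$.

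The main obstacle is item 8. Writing $m:=m_1(F)$ and letting $H$ be the law of $|Y-m|$ with $Y\sim F$, I would first observe that by the very definitions of $m_2$ and $m_4$ we have $m_1(H)=m_2(F)$ and $\sigmed(H)=m_4(F)$, and these are unique since $F\in\Fam$. The key identity is $H(x)=F(m+x)-F((m-x)^-)$ for $x\ge 0$; for $0\le x_1<x_2$ this yields $H(x_2)-H(x_1)=[F(m+x_2)-F(m+x_1)]+[F((m-x_1)^-)-F((m-x_2)^-)]$, and as long as all arguments stay in $[m\plusminus 2m_2(F)]$ --- where \eqref{eq:def-Fam} is guaranteed by Definition~\ref{def:fam-mad}(i) --- each bracket is at least $\tfrac{1}{Bm_2(F)}(x_2-x_1)$, giving $H$ slope at least $\tfrac{2}{Bm_2(F)}$. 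Combining with $m_2(F)\le\kappa m_4(F)=\kappa\sigmed(H)$ from Definition~\ref{def:fam-mad}(iii) turns this into $|H(x_1)-H(x_2)|\ge\tfrac{1}{\kappa B m_4(F)}|x_1-x_2|=\tfrac{1}{B_H\sigmed(H)}|x_1-x_2|$ with $B_H=\kappa B$, which is exactly the defining inequality for $\mathcal{F}_{\bar t_H, B_H}$. The last step is to certify that $[Q_{L,H}(\half-\bar t_H),Q_{R,H}(\half+\bar t_H)]$ sits inside $[0,2m_2(F)]$ so that this ``arguments in range'' requirement is actually met: taking $x_1=m_2(F)$, $x_2=2m_2(F)$ above and using $H(m_2(F))=H(m_1(H))\ge\half$ gives $H(2m_2(F))\ge\half+\tfrac{2}{B}\ge\half+\bar t_H$ for the choice $\bar t_H=\min(\half,\tfrac{2}{B})$, while $Q_{L,H}\ge 0$ trivially since $H$ lives on $\Real_{\geq 0}$. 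I expect the fiddly parts to be the careful handling of one-sided limits in the identity for $H(x)$ and the precise verification that the slope hypothesis on $F$ is needed only on $[m\plusminus 2m_2(F)]$; the algebra elsewhere is routine.
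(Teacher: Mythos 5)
Your proposal is correct and follows essentially the same route as the paper: items 1--7 are treated as routine consequences of the definitions and of each other, and item 8 is proved by decomposing $H(r_1)-H(r_2)$ into two increments of $F$ inside $[m_1(F)\plusminus 2m_2(F)]$, applying~\eqref{eq:def-Fam} there, converting $m_2(F)\leq\kappa m_4(F)=\kappa m_2(H)$ into the constant $B_H=\kappa B$, and finally checking $H(2m_2(F))\geq\half+\bar t_H$ so that $I_{H,\bar t_H}\subseteq[0,2m_2(F)]$. The only notable difference is that you retain both increments (getting slope $2/(Bm_2(F))$ and hence $H(2m_2(F))-\half\geq 2/B$), whereas the paper discards the second one and bounds this gap only by $1/B$ --- your two-sided version is in fact what is needed to certify the stated $\bar t_H=\min(\half,\tfrac{2}{B})$ once $B>2$, so keep it.
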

\begin{proof}
When proved in order, all of these statements follow easily from the definition of the function class and the earlier statements. The only part requiring effort is item 8, which we now verify. Fix any $r_1, r_2 \in I_{H,\bar{t}_H}$, where without loss of generality $r_1 \geq r_2 \geq 0$. Then
\begin{align}
H(r_1) - H(r_2)
&= \Prob(|Y - m_1(F)| \leq r_1) - \Prob(|Y - m_1(F)| \leq r_2) \nonumber
\\ &=
\left[\Prob(Y \leq m_1(F) + r_1) - \Prob(Y \leq m_1(F) + r_2)\right] \nonumber
\\ &\;\;\;+ \left[
\Prob(Y < m_1(F) - r_2) - \Prob(Y < m_1(F) - r_1)
\right] \nonumber
\\ &\geq F(m_1(F) + r_1) - F(m_1(F) + r_2) \nonumber
\\ &\geq \frac{|r_1 - r_2|}{Bm_2(F)} \label{eq:proof:fam-properties:range}
\\ &\geq \frac{|r_1 - r_2|}{\kappa Bm_4(F)} \nonumber
\\ &= \frac{|r_1 - r_2|}{B_H m_2(H)}. \nonumber
\end{align}
The only step requiring justification is the inequality in \eqref{eq:proof:fam-properties:range}: this is evident by \eqref{eq:def-Fam} if $r_1, r_2 \in [m_1(F) \plusminus 2m_2(F)]$, but this condition must be checked. Once we show this condition is met, we are immediately done.
\par Therefore, it is now sufficient to prove that $r_1, r_2 \in [m_1(F) \plusminus 2m_2(F)]$. Since $r_1, r_2 \geq 0$, it suffices to show that the largest value in $I_{H,\bar{t}_H}$, namely $Q_{R,H}(\half + \bar{t}_H)$, is at most $2m_2(F)$. And to show this, it suffices to show $\half + \bar{t}_H < H(2m_{2}(F))$. We show this last inequality presently: by an similar argument as in the first few lines of the above display,
\begin{align}
H(2m_{2}(F)) - \half
&= H(2m_{2}(F)) - H(m_{2}(F)) \nonumber
\\ &= \left[\Prob(Y \leq m_1(F) + 2m_2(F)) - \Prob(Y \leq m_1(F) + m_2(F))\right] \nonumber
\\ &\;\;\;+ \Prob(Y \in [m_1(F) - 2m_2(F), m_1(F) - m_2(F))) \nonumber
\\ &>F(m_1(F) + 2m_{2}(F)) - F(m_1(F) + m_{2}(F))
\label{eq:proof:fam-properties:range-2}
\\ &\geq \frac{1}{B}\label{eq:proof:fam-properties:range-3}
\\ &\geq \bar{t}_H, \nonumber
\end{align}
where \eqref{eq:proof:fam-properties:range-2} is because $F$ is monotonically increasing in $[m_1(F) - 2m_2(F), m_1(F)+2m_2(F)]$ by property (i) in the definition of $\Fam$, and \eqref{eq:proof:fam-properties:range-3} is due to \eqref{eq:def-Fam}. This completes the proof.
\end{proof}

\section{Proofs for Estimation from Contaminated Samples}\label{app:est}
Throughout, we denote the indicator random variable for an event $\mathcal E$ by $\mathds{1}\{\mathcal E\}$.

\subsection{Estimation of Median for Oblivious and Prescient Adversaries}\label{app:est:median}
\begin{proof}[Proof of Lemma~\ref{lem:est-med-R}]
For shorthand, let $\hat{m} := \hatm(X_1, \dots, X_n)$ and let $m \in m_1(F)$ be any median of $F$.
For each $i \in [n]$, define the indicator random variable
\[
L_i := \mathds{1}\left\{(D_i = 1) \text{ or } \left(D_i = 0 \text{ and } Y_i \geq \QRF\left(\tfrac{1}{2(1-\eps)}+a\right) \right) \right\},
\]
where $a := \tfrac{\sqrt{\log (2/\delta)}}{(1-\eps)\sqrt{2n}} <  \sqrt{\tfrac{2\log (2/\delta)}{n}}$. By independence of $D_i$ and $Y_i$, $L_i$ has mean 
\[
\E\left[L_i\right] =
\eps + (1-\eps)\left(1 - \left(\tfrac{1}{2(1-\eps)}+a\right)\right) 
= \tfrac{1}{2} - (1 - \eps)a.
\]
Moreover, the $\{L_i\}_{i \in [n]}$ are independent, and thus by Hoeffding's inequality,
\begin{align*}
\Prob\left(\hat{m} \geq \QRF\left(\frac{1}{2(1-\eps)} + a \right) \right)
\leq \Prob\left(\sum_{i=1}^n L_i \geq \frac{n}{2} \right)
\leq \exp\left(-2n(1-\eps)^2a^2\right)
= \frac{\delta}{2}.
\end{align*}
Therefore, with probability at least $1 - \tfrac{\delta}{2}$,
\[
\hat{m}- m
<
\QRF\left(\frac{1}{2(1-\eps)} + a \right) -\QRF\left(\frac{1}{2}\right)
\leq R\left(
\frac{\eps}{2(1-\eps)} + a 
\right),
\]
where the final inequality is due to \eqref{eq:def-R}, which we may invoke since $\half + \tfrac{\eps}{2(1-\eps)} + a \leq \half + \tmax$ by our choice of $n$. An identical argument (or by symmetry with $-F$) yields the analogous result for the lower tail of $\hat{m}$, namely that $m - \hat{m} <  R(\frac{\eps}{2(1-\eps)} + a)$ with probability at least $1 - \tfrac{\delta}{2}$. The lemma statement follows by a union bound.
\end{proof}

\subsection{Estimation of Median for Malicious Adversaries}
We now turn to providing the proofs of Lemma~\ref{lem:est-med-malicious-R} and~\ref{lem:est-med-malicious-R-tight}. Below, it will be convenient to denote by $y_{(k)}$ the $k$-th order statistic of a (possibly random) sequence $y_1, \dots, y_n \in \Real$. 
\par Informally, the proof of Lemma~\ref{lem:est-med-malicious-R} proceeds by (i) showing that $\hat{m}_1(X_1, \dots, X_n)$ is deterministically bounded within the order statistics $Y_{(\lfloor \tfrac{n}{2}\rfloor \plusminus \sum_{i=1}^n D_i)}$ (Lemma~\ref{lem:med-contaminate-k}), (ii) applying Hoeffding's inequality to argue that w.h.p., at most $\sum_{i=1}^n D_i \approx \eps n$ samples are contaminated, and (iii) reusing the techniques of Lemma~\ref{lem:est-med-R} to argue that w.h.p., the order statistics of $Y_{(\lfloor \tfrac{n}{2}\rfloor \plusminus \eps n)}$ are within the desired error range from $\med(F)$. Since each of these steps is tight up to a small amount of slack, the proof of the converse Lemma~\ref{lem:est-med-malicious-R-tight} proceeds essentially by just showing that each of these steps occurs also in the opposite direction w.h.p.
\par The following lemma will be helpful for step (i). Its proof is straightforward by induction on $s$ and is thus omitted.

\begin{lemma}\label{lem:med-contaminate-k}
Let $x_i := d_i y_i + (1 - d_i) z_i$, where $y_1 \leq \dots \leq y_n$ and $z_1, \dots, z_n$ are arbitrary real-valued sequences, and $d_1, \dots, d_n$ is an arbitrary binary-valued sequence satisfying $s := \sum_{i=1}^n d_i < \tfrac{n}{2}$. Then
\[
y_{\left(\left\lfloor\frac{n}{2}\right\rfloor  - s\right)}
\leq 
\hatmed(x_1, \dots, x_n)
\leq 
y_{\left(\left\lceil\frac{n}{2}\right\rceil+ s\right)}.
\]
\end{lemma}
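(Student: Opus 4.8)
The plan is to prove the two inequalities by a direct counting argument (the induction on $s$ suggested in the statement is one way to organize the bookkeeping, but the counting version is more transparent), together with the elementary fact that the empirical median of any real sequence always lies between its two central order statistics: $\hatmed(w_1,\dots,w_n)\le w_{(\lceil n/2\rceil)}$ and $\hatmed(w_1,\dots,w_n)\ge w_{(\lfloor n/2\rfloor)}$ (for even $n$ these are the averaging bounds $w_{(n/2+1)}$ and $w_{(n/2)}$). By replacing $(x_i,y_i,z_i)$ with $(-x_i,-y_i,-z_i)$ (and re-sorting the $y$'s), the lower bound follows from the upper bound, so it suffices to treat $\hatmed(x_1,\dots,x_n)\le y_{(\lceil n/2\rceil + s)}$.

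For the upper bound, recall that $x_i$ equals the (already sorted) value $y_i$ for all but $s$ indices $i$. Set the threshold $a:=y_{(\lceil n/2\rceil + s)}$. The $\lceil n/2\rceil + s$ smallest true values $y_1,\dots,y_{\lceil n/2\rceil + s}$ are all $\le a$; since at most $s$ indices are contaminated, at least $(\lceil n/2\rceil + s)-s=\lceil n/2\rceil$ of the indices $i\in\{1,\dots,\lceil n/2\rceil + s\}$ are uncontaminated, and for each such index $x_i=y_i\le a$. Hence $\#\{i\in[n]:x_i\le a\}\ge \lceil n/2\rceil$, which forces the $\lceil n/2\rceil$-th order statistic of $x_1,\dots,x_n$ to be at most $a$, and therefore (by the sandwiching fact above) $\hatmed(x_1,\dots,x_n)\le a$. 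This is where the hypothesis $s<n/2$ is used: it guarantees $\lceil n/2\rceil + s < n$, so $a$ is a well-defined order statistic and the uncontaminated majority argument goes through. Organizing this as an induction on $s$ amounts to the same thing: peel off one contaminated coordinate, replace it by the corresponding $y_j$ to land at $s-1$ contaminated coordinates, invoke the inductive hypothesis, and use monotonicity of the empirical median in each coordinate to absorb the single restored entry, which can shift the median by at most one order-statistic position.

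The only step requiring genuine care is the parity of $n$: when $n$ is even the empirical median is the average of the two central order statistics $x_{(n/2)}$ and $x_{(n/2+1)}$, so the counting bound needs to be tightened accordingly, and the floor/ceiling indices in the final bound must be tracked exactly so that the window $[\,y_{(\lfloor n/2\rfloor - s)},\,y_{(\lceil n/2\rceil + s)}\,]$ comes out with the stated indices. Once a fixed convention for these is in place, every inequality is immediate from the definitions of order statistics and of the empirical median, which is presumably why the statement defers the proof as routine.
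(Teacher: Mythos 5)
The paper does not actually supply a proof of this lemma (it is declared ``straightforward by induction on $s$'' and omitted), so your direct counting argument is a perfectly reasonable way to fill it in, and it is essentially what the induction unwinds to. Your count correctly yields $x_{(\lceil n/2\rceil)}\leq y_{(\lceil n/2\rceil+s)}$, which settles the upper bound for odd $n$, and the mirrored count gives the lower bound $\hatmed(x_1,\dots,x_n)\geq x_{(\lfloor n/2\rfloor)}\geq y_{(\lfloor n/2\rfloor-s)}$ for both parities. (You also silently read $x_i$ as $(1-d_i)y_i+d_iz_i$ with $s$ the number of \emph{contaminated} indices; as literally written the lemma has these roles swapped, but your reading is the intended one, consistent with how the lemma is invoked in the proof of Lemma~\ref{lem:est-med-malicious-R}.)

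The genuine gap is the even-$n$ upper bound, which you flag but dismiss as bookkeeping ``once a fixed convention is in place.'' It is not bookkeeping: with the paper's convention $\hatmed=\tfrac12(x_{(n/2)}+x_{(n/2+1)})$ for even $n$, your count only controls $x_{(n/2)}$, whereas the median is dragged up by $x_{(n/2+1)}$, and to force $x_{(n/2+1)}\leq y_{(j)}$ you need $j-s\geq n/2+1$, i.e.\ $j\geq n/2+s+1$. Indeed the inequality as stated fails: take $n=4$, $y=(0,1,2,3)$, $s=1$, and contaminate the first coordinate to $z_1=100$; then $\hatmed(x_1,\dots,x_4)=\tfrac12(2+3)=2.5>y_{(3)}=2=y_{(\lceil n/2\rceil+s)}$. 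The correct uniform index is $\lfloor n/2\rfloor+1+s$ (which equals $\lceil n/2\rceil+s$ only for odd $n$); relatedly, your ``sandwiching fact'' is internally inconsistent for even $n$, since $\lceil n/2\rceil=n/2$ there, not $n/2+1$ --- the correct statement is $w_{(\lfloor n/2\rfloor)}\leq\hatmed\leq w_{(\lfloor n/2\rfloor+1)}$. So the off-by-one is a defect of the lemma as stated (harmless downstream, since it only perturbs constants in Lemma~\ref{lem:est-med-malicious-R}), but your proof should either restrict to odd $n$, correct the index, or note the discrepancy explicitly rather than assert that the stated indices ``come out.''
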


\begin{proof}[Proof of Lemma~\ref{lem:est-med-malicious-R}]
Define for shorthand $a := \sqrt{\tfrac{\log(3/\delta)}{2n}}$. By Hoeffding's inequality, the event $E := \{\sum_{i=1}^n D_i \leq (\eps + a) n \}$ occurs with probability at least $\Prob(E) \geq 1 - \tfrac{\delta}{3}$. Since $\eps + a \leq \tmax < \half$ by our choice of $n$,  Lemma~\ref{lem:med-contaminate-k} implies that, whenever $E$ occurs,
\[
Y_{\left(\left\lfloor\frac{n}{2}\right\rfloor - \lfloor (\eps + a) n \rfloor\right)}
\leq
\hatmed(X_1, \dots, X_n)
\leq
Y_{\left(\left\lceil\frac{n}{2}\right\rceil + \lfloor (\eps + a ) n \rfloor\right)}
\]
also occurs.
Now, define for each $i \in [n]$ the indicator random variable $L_i := \mathds{1}\left\{Y_i > \QRF(\half + \eps + 2a) \right\}$. Then $\E[L_i] \leq \half - \eps - 2a$, so by Hoeffding's inequality,
\begin{align*}
\Prob\Bigg(Y_{\left(\left\lceil\frac{n}{2}\right\rceil + \lfloor (\eps + a ) n \rfloor\right)}>\QRF(\half + \eps + 2a)\Bigg)
\leq \Prob\left(\sum_{i=1}^n L_i \geq (\half - \eps - a)n \right)
\leq \exp(-2na^2)
= \frac{\delta}{3}.
\end{align*}
An identical argument (or simply by symmetry on $F(-\cdot)$) also yields that $\DS Y_{\left(\left\lfloor\frac{n}{2}\right\rfloor - \lfloor (\eps + a) n \rfloor\right)} < \QLF(\half - \eps - 2a)$ with probability at most $\tfrac{\delta}{3}$. We conclude by a union bound that with probability at least $1 - \delta$,
\[
\QLF\left(\half - (\eps + 2a)\right)
\leq
\hatmed(X_1, \dots, X_n)
\leq 
\QRF\left(\half + (\eps + 2a)\right).
\]
Whenever this occurs, we have by virtue of \eqref{eq:def-R} that $\sup_{m \in \med(F)} \left| \hatmed(X_1, \dots, X_n) - m \right|
\leq R\left(\eps + 2a \right)$,
since both $\half \plusminus (\eps + 2a) \in [\half \plusminus \tmax]$ by our choice of $n$.
\end{proof}

\begin{proof}[Proof of Lemma~\ref{lem:est-med-malicious-R-tight}]
  Consider any distribution $F$ with unique median $0$ satisfying $R(t) = \QRF(\half + t) = -\QLF(\half - t)$ for each $t \in \tmax$. Such an $F$ can be constructed by starting with a Dirac measure $\delta_0$ at zero and then pushing mass to the tails as far as~\eqref{eq:def-R} allows. Next, consider the joint distribution on $(D,Y,Z)$ where $Y \sim F$, the conditional distribution of $D$ given $Y$ is $\Ber(2\eps \cdot \mathds{1}\{Y \leq 0\})$ and $Z \sim \delta_{\QRF(\half + \eps)}$. The marginal of $D$ is easily seen to be correct, since 
\[
\Prob(D = 1) = \Prob(D = 1 | Y \leq 0) \cdot \Prob(Y \leq 0) + \Prob(D = 1|Y > 0)\cdot \Prob(Y > 0) = 2\eps \cdot \half = \eps.
\]
Let $a := \sqrt{\tfrac{\log (1/\delta)}{2n}}$, and note that $a < \eps$ by our choice of $n$. For each $i \in [n]$, define the indicator random variable
\[
L_i := \mathds{1}\left\{(Y_i > R(\eps - a) ) \text{ or } (Y_i \leq 0 \text{ and } D_i = 1) \right\}.
\]
Each of these has mean
\[
\E[L_i] \leq 1 - (\half + (\eps - a)) + \half (2\eps) = \half + a.
\]
Therefore, by Hoeffding's inequality, we conclude that
\begin{align*}
\Prob\Big(
\left| \hatmed(X_1, \dots, X_n) - \med(F) \right| <
R\left(\eps  - a \right)
\Big)
& \leq \Prob\Big(\hatmed(X_1, \dots, X_n) < \med(F) + R(\eps - a) \Big)
\\ &\leq \Prob\left(
\sum_{i=1}^n L_i \geq \frac{n}{2}
\right)
\leq \exp\left(
-2a^2n
\right)
= \delta.
\end{align*}
\end{proof}

\subsection{Estimation of Second Robust Moment}\label{app:est:mad}

Here, we prove Lemma~\ref{lem:est-mad}. This is done by decomposing the MAD estimation error into two terms, each of which resembles the error between a true median (of a distribution related to $F$) and an empirical median of contaminated samples, and then applying the median estimation guarantees from the previous section. We begin by proving several helpful lemmas. 

\begin{lemma}\label{lem:med-lipschitz-helper}
For any (possibly random) sequence $x_1, \dots, x_n \in \Real$ and any $c \in \Real$,
\[
\Big|\hatmed(|x_1 + c|, \dots, |x_n + c|) - \hatmed(|x_1|, \dots, |x_n|)\Big| \leq |c|
\]
\end{lemma}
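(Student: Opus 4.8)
The plan is to establish a general \emph{stability} property of the empirical median --- namely that $\hatmed$, viewed as a function of an $n$-tuple of reals, is $1$-Lipschitz with respect to the coordinatewise ($\ell_\infty$) distance --- and then apply it to the two tuples $(|x_1|,\dots,|x_n|)$ and $(|x_1+c|,\dots,|x_n+c|)$, which differ in each coordinate by at most $|c|$ thanks to the reverse triangle inequality $\big||x_i+c|-|x_i|\big|\le|c|$. Since the statement is about deterministic reals applied coordinatewise, the ``possibly random'' version follows immediately by arguing on each realization, so I would treat $x_1,\dots,x_n$ as fixed reals.

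First I would record two elementary facts about $\hatmed$. (a) \emph{Monotonicity}: if $a_i\le b_i$ for all $i\in[n]$, then $\hatmed(a_1,\dots,a_n)\le \hatmed(b_1,\dots,b_n)$. (b) \emph{Translation equivariance}: for any $\eta\in\Real$, $\hatmed(a_1+\eta,\dots,a_n+\eta)=\hatmed(a_1,\dots,a_n)+\eta$. Both reduce to the corresponding statements for order statistics. Writing $a_{(k)}$ for the $k$-th smallest of $a_1,\dots,a_n$ and using the threshold characterization $a_{(k)}=\min\{t\in\Real:|\{i:a_i\le t\}|\ge k\}$, one sees that $a_i\le b_i$ for all $i$ implies $\{i:b_i\le t\}\subseteq\{i:a_i\le t\}$ for every threshold $t$, hence $a_{(k)}\le b_{(k)}$; and $(a_i+\eta)_{(k)}=a_{(k)}+\eta$ is immediate since adding a constant preserves order. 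Because $\hatmed$ is a single middle order statistic when $n$ is odd and the average of the two middle ones when $n$ is even, properties (a) and (b) carry over to $\hatmed$ in both cases.

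Next I would combine (a) and (b) into the stability bound: if $a=(a_1,\dots,a_n)$ and $b=(b_1,\dots,b_n)$ satisfy $|a_i-b_i|\le\eta$ for all $i$, i.e.\ $b_i-\eta\le a_i\le b_i+\eta$, then applying (a) to each of these coordinatewise inequalities and then (b) gives
\[
\hatmed(b_1,\dots,b_n)-\eta\;\le\;\hatmed(a_1,\dots,a_n)\;\le\;\hatmed(b_1,\dots,b_n)+\eta,
\]
so $\big|\hatmed(a_1,\dots,a_n)-\hatmed(b_1,\dots,b_n)\big|\le\eta$. Finally I would instantiate this with $a_i:=|x_i+c|$ and $b_i:=|x_i|$: the reverse triangle inequality yields $|a_i-b_i|=\big||x_i+c|-|x_i|\big|\le|c|$ for every $i$, so the displayed bound with $\eta=|c|$ is exactly the claim.

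This argument presents essentially no obstacle; the only points requiring a modicum of care are stating the order-statistic monotonicity cleanly via the threshold characterization and handling the even-$n$ case of $\hatmed$ (an average of two middle order statistics), both of which are routine.
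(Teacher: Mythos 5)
Your proposal is correct and takes essentially the same route as the paper, which also argues that $\hatmed(|x_1+c|,\dots,|x_n+c|)$ is a $1$-Lipschitz function of $c$ because it is a composition of maps that are $1$-Lipschitz in the $\ell_\infty$ norm (adding $c$, entrywise absolute value, order statistics). You simply supply the details of why order statistics are $1$-Lipschitz (via monotonicity and translation equivariance), which the paper asserts without proof.
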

holds almost surely, over the possible randomness of the sequence.
\begin{proof}
For any fixed vector $x \in \Real^n$, the function $c \mapsto \hatmed(|x_1 + c|, \dots, |x_n + c|)$ is $1$-Lipschitz since it is the composition of the following $1$-Lipschitz functions with respect to the $L_{\infty}$ norm: adding $c\vec{1}$, taking entrywise absolute values, and taking an order statistic. 
\end{proof}

We also give the following distributional version of Lemma~\ref{lem:med-lipschitz-helper}.

\begin{lemma}\label{lem:med-lipschitz-helper-Popul}
Consider any real-valued random variable $X$ and any $c \in \Real$. Assume that $m_1(|X|)$ and $m_1(|X+c|)$ are unique. Then
\[
\big| \med(|X+c|) - \med(|X|) \big| \leq |c|.
\]
\end{lemma}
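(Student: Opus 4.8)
The plan is to mirror the almost-sure argument of Lemma~\ref{lem:med-lipschitz-helper} at the distributional level, using the ``population'' analogue of the $1$-Lipschitz reasoning. First I would observe that for any fixed $c$, the map $y \mapsto |y+c|$ differs from $y \mapsto |y|$ by at most $|c|$ pointwise, so the random variable $|X+c|$ is sandwiched as $|X| - |c| \leq |X+c| \leq |X| + |c|$ almost surely. Translating a random variable by a constant shifts its median by exactly that constant, and stochastic domination between random variables implies the corresponding inequality between their (unique) medians. Hence $\med(|X|) - |c| = \med(|X| - |c|) \leq \med(|X+c|) \leq \med(|X| + |c|) = \med(|X|) + |c|$, which is precisely the claim.

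More carefully, the key steps in order are: (1) note $||y+c| - |y|| \leq |c|$ for all $y \in \Real$ by the triangle inequality, so the events $\{|X+c| \le t\}$ and $\{|X| \le t \pm |c|\}$ are nested appropriately; (2) recall the elementary fact that if $V \le W$ almost surely then any median of $V$ is $\le$ any median of $W$ (immediate from the definition of $\med$ via the cdf inequalities $F_W \le F_V$); (3) apply this with $(V,W) = (|X| - |c|,\, |X+c|)$ and with $(V,W) = (|X+c|,\, |X| + |c|)$; (4) use that $\med(|X| + c') = \med(|X|) + c'$ for a deterministic shift $c'$, valid since the median of $|X|$ is assumed unique, to identify $\med(|X| \pm |c|)$ with $\med(|X|) \pm |c|$; (5) combine the two one-sided bounds to conclude $|\med(|X+c|) - \med(|X|)| \le |c|$. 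The uniqueness hypotheses on $m_1(|X|)$ and $m_1(|X+c|)$ are exactly what is needed so that ``$\med$'' denotes a genuine point and the inequalities between medians are unambiguous.

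I do not expect a real obstacle here; this is the population counterpart of an already-proved sample statement, and everything reduces to the two elementary facts that translation shifts the median by the translation amount and that stochastic domination is monotone in the median. The only mild subtlety is bookkeeping with left/right quantiles in the degenerate case where $|X|$ or $|X+c|$ fails to have a unique median, but this is excluded by hypothesis, so no case analysis is required. If one wanted to avoid even invoking the ``sandwich'' random variables, an alternative is to argue directly at the level of cdfs: for every $t \ge 0$, $\Prob(|X+c| \le t) \ge \Prob(|X| \le t - |c|)$ and $\le \Prob(|X| \le t + |c|)$, which immediately pins $\med(|X+c|)$ into $[\med(|X|) - |c|,\, \med(|X|) + |c|]$ via the definition of the median as the half-mass point.
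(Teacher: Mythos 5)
Your proof is correct, and it reaches the conclusion by a somewhat different route than the paper. The paper fixes $c\ge 0$, writes the events $\{|X+c|\le m+c\}$ and $\{|X|\le m\}$ explicitly as intervals in $X$, and checks by hand that one interval contains the other (and symmetrically for the reverse bound). You instead start from the pointwise reverse triangle inequality $\bigl|\,|y+c|-|y|\,\bigr|\le |c|$, which gives the almost-sure sandwich $|X|-|c|\le |X+c|\le |X|+|c|$, and then invoke two general facts: almost-sure domination is monotone in the median, and a deterministic translation shifts the median by exactly the translation amount. At bottom both arguments rest on the same event containments, e.g.\ $\{|X|\le m\}\subseteq\{|X+c|\le m+|c|\}$, but your packaging is more modular and actually yields the stronger statement $|\med(f(X))-\med(g(X))|\le \sup_y|f(y)-g(y)|$ for arbitrary $f,g$ (under the corresponding uniqueness assumptions), whereas the paper's computation is tied to the specific form of $|X+c|$. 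One small caveat: your step (2), stated as ``any median of $V$ is at most any median of $W$ whenever $V\le W$ a.s.,'' is false without uniqueness --- the two median intervals can overlap so that the right endpoint of the first exceeds the left endpoint of the second; what holds in general is that the left and right $\half$-quantiles are each monotone under the cdf inequality $F_W\le F_V$. That weaker fact is all you actually use, and it suffices here because the uniqueness hypothesis (together with translation invariance of uniqueness) makes every median in your chain a single point.
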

\begin{proof}
Assume without loss of generality that $c\geq 0$; the case $c < 0$ follows by an identical argument or by symmetry. For shorthand, let $m := \med(|X|)$ and let $\tilde{m} := \med(|X+c|)$. First, we show that $\tilde{m} \leq m+c$, i.e., we show that $\PP(|X+c|\leq m+c)\geq \half$. This is straightforward: $\PP(|X+c|\leq m+c)=\PP(-m-2c\leq X\leq m) \geq \PP(-m\leq X\leq m)=\PP(|X|\leq m)\geq \half$, where we the first step is by non-negativity of $m$ and the last step is by definition of $m$. A nearly identical argument shows $m \leq \tilde{m} + c$, namely $\Prob(|X| \leq \tilde{m} + c) = \Prob(-\tilde{m} - c \leq X \leq \tilde{m} + c) = \Prob(-\tilde{m} \leq X \leq \tilde{m}+ 2c) \geq \Prob(-\tilde{m} \leq X \leq \tilde{m}) = \Prob(|X| \leq \tilde{m}) \geq \half$. We therefore conclude that $|\tilde{m} - m| \leq c$.
\end{proof}


\begin{lemma}\label{lem:bound-2-4}
For any distribution $F$ with unique $m_2(F)$ and $m_4(F)$,
\[
m_4(F) \leq 2m_2(F).
\]
\end{lemma}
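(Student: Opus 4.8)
The plan is to reduce the claim to a one-line statement about a single non-negative random variable. Set $W := |X - m_1(F)|$ where $X \sim F$, and write $\mu := m_2(F)$. By definition of the robust moments, $\mu = \med(W)$ and $m_4(F) = \med(|W - \mu|)$, so the desired inequality $m_4(F) \leq 2m_2(F)$ is precisely $\med(|W - \mu|) \leq 2\mu$. Note also that $\mu \geq 0$ since $W \geq 0$ almost surely, and that the uniqueness hypotheses on $m_2(F)$ and $m_4(F)$ let us treat $\med(W)$ and $\med(|W-\mu|)$ as single real numbers rather than intervals.

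The key step is the elementary observation that, because $W \geq 0$ and $\mu \geq 0$, we have $\{|W - \mu| \leq 2\mu\} = \{-\mu \leq W \leq 3\mu\} = \{W \leq 3\mu\}$, the lower constraint being vacuous. Hence
\[
\Prob\big(|W - \mu| \leq 2\mu\big) = \Prob(W \leq 3\mu) \geq \Prob(W \leq \mu) \geq \half,
\]
where the first inequality uses $3\mu \geq \mu$ together with monotonicity of the cdf of $W$, and the second uses that $\mu$ is the (unique, hence left) median of $W$, so the cdf of $W$ evaluated at $\mu$ is at least $\half$.

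I would then finish with the standard fact that if a real random variable $Z$ satisfies $\Prob(Z \leq a) \geq \half$, then $a$ is at least the left $\half$-quantile of $Z$; applying this with $Z = |W - \mu|$ and $a = 2\mu$, and invoking uniqueness of $m_4(F)$ so that $\med(|W-\mu|)$ equals that left quantile, yields $m_4(F) = \med(|W - \mu|) \leq 2\mu = 2m_2(F)$. There is essentially no genuine obstacle here; the only thing requiring a little care is bookkeeping with the paper's left/right quantile conventions — namely confirming that the uniqueness assumptions collapse the relevant median sets to points and that the cdf inequalities are oriented correctly.
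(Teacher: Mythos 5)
Your proof is correct, but it takes a different route from the paper's. The paper derives the bound as a one-line consequence of its Lemma~\ref{lem:med-lipschitz-helper-Popul}, which states that $c \mapsto m_1(|X+c|)$ is $1$-Lipschitz at the level of population medians: writing $W = |Y - m_1(F)|$, it applies that lemma with $c = -m_2(F)$ to get $m_4(F) = m_1(|W - m_2(F)|) \leq m_1(W) + m_2(F) = 2m_2(F)$. You instead argue directly with the cdf: since $W \geq 0$, the event $\{|W - \mu| \leq 2\mu\}$ collapses to $\{W \leq 3\mu\}$, whose probability is at least $\Prob(W \leq \mu) \geq \half$, and then the left-quantile characterization of the median finishes the job. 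Your argument is self-contained and more elementary (it does not need the Lipschitz lemma, only non-negativity of $W$ and monotonicity of its cdf), whereas the paper's is shorter only because Lemma~\ref{lem:med-lipschitz-helper-Popul} is already available there, having been proved for use in the MAD estimation argument. Your handling of the left/right quantile conventions and the uniqueness hypotheses is careful and correct.
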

\begin{proof}
Let $Y \sim F$. By Lemma~\ref{lem:med-lipschitz-helper-Popul},
\begin{align*}
m_4(F)
= m_1\left(
\big|\left|
Y - m_1(F)
\right|
- m_2(F)\big|
\right)
\leq m_1\left(\left|Y - m_1(F) \right| \right) + m_2(F)
= 2m_2(F).
\end{align*}
\end{proof}

We are now ready to prove Lemma~\ref{lem:est-mad}.

\begin{proof}[Proof of Lemma~\ref{lem:est-mad}]
  For shorthand, denote $\hat{m} := \hatmed(X_1, \dots, X_n)$, and let $H$ denote the distribution of $|Y - m_1(F)|$ where $Y \sim F$. Combining the fact that $m_2(F) = m_1(H)$ with Lemma~\ref{lem:med-lipschitz-helper} and the triangle inequality, the MAD estimation error is bounded above by
\begin{align}
&\;\Big|\hatsigmed(X_1, \dots, X_n) - \sigmed(F)\Big| \nonumber
\\ =&\;\Big| \hatmed\left(|X_1 - \hat{m}|, \dots, |X_n - \hat{m}|\right) - m_1(H) \Big| \nonumber
\\ \leq&\;
\Big| \hatmed\left(|X_1 - \med(F)|, \dots, |X_n - \med(F)|\right)- m_1(H) \Big|  + \Big|\hat{m} - \med(F)\Big|.
\label{eq:lem-est-mad:decompose}
\end{align}
By Corollary~\ref{corol:est-med} and our choice of $n$, the second error term in \eqref{eq:lem-est-mad:decompose} is bounded above by $\UncertaintyF + \tfrac{E}{2}$ with probability at least $1 - \tfrac{\delta}{2}$. To control the first error term in \eqref{eq:lem-est-mad:decompose}, we apply Corollary~\ref{corol:est-med} with the distribution $H$ in lieu of $F$ and  the contaminations $\tilde{Z}_i := |Z_i - \med(F)|$ in lieu of $Z_i$. Thus, by combining item 8 in Lemma~\ref{lem:fam-properties} with Corollary~\ref{corol:est-med}, this first error term is bounded above by $U_{\eps, \kappa B,m_2(H)} + \tfrac{E}{2}$ with probability at least $1 - \tfrac{\delta}{2}$ whenever we have at least $ 2\max\left(
4B^2\kappa^2m_2^2(H)E^{-2}, (\min(\half, \tfrac{1}{B}) - \tfrac{\eps}{2(1-\eps)})^{-2}
\right) \log\tfrac{4}{\delta}$ samples, which is satisfied because of our choice of $n$ and the inequality $m_2(H) = m_4(F) \leq 2m_2(F)$ from Lemma~\ref{lem:bound-2-4}. Therefore, a union bound implies that, with probability at least $1 - \delta$, the MAD estimation error is at most $\UncertaintyF + U_{\eps, \kappa B,m_2(H)} + E$. By another application of the inequality $m_2(H) \leq 2m_2(F)$, this is bounded above by $(1 + 2\kappa)\UncertaintyF + E$, as desired.
\end{proof}

\par The proof of Lemma~\ref{lem:est-mad-malicious} -- the analogous result to Lemma~\ref{lem:est-mad} but for the \emph{malicious} adversarial setting -- is omitted since it is identical to the proof of Lemma~\ref{lem:est-mad} with the uses of Corollary~\ref{corol:est-med} replaced by uses of Corollary~\ref{corol:est-med-malicious}.


\section{Proofs for Adaptation of the Successive Elimination Algorithm}\label{app:algorithms}

We first define some notation. Let $c$ be a constant such that $n_{\alpha, \delta} \leq \frac{c}{\alpha^2}\logdel$ for all $\alpha > 0$ and $\delta \in (0, 1)$; such a constant clearly exists by definition of \PIBAIspace (see Equation~\ref{eq:estimator}). Let $R$ denote the number of total rounds in Algorithm~\ref{alg:SuccElim} before termination, and for each round $r \in [R]$, let $S_r$ denote the set of all arms still in $S$ when entering round $r$. For succintness, we also denote $\delta_r := \tfrac{6\delta }{\pi^2kr^2}$.

\begin{proof}[Proof of Theorem~\ref{Thm:SuccElim}]
Without loss of generality, assume that the best arm is $i^* = 1$. Define the event $E := \{|\hat p_{i,r}-p_i|\leq U_i+\alpha_{r,\delta_r}, \,\forall r\geq 1,\, \forall i\in S_r\}$. Recall that $\hat p_{i,r}$ is the estimate of the measure of quality for arm $i$ after $r$ samples, and that arm $i$ will stop being pulled once $i\notin S_r$. For analysis purposes, consider 
consider virtual estimates $\hat p_{i,r}$ that would be obtained if we continued to pull the eliminated arms indefinitely, so that $\hat p_{i,r}$ is defined for all $i \in [k]$ and $r \in \mathbb{N}$. By a union bound,
\begin{align*}
	\PP\left(E^C\right) 
	\leq \sum_{i=1}^k\sum_{r=1}^\infty \PP\left(|\hat p_{i,r}-p_i|> U_i+\alpha_{r,\delta_r}\right)
	\leq \sum_{i=1}^k\sum_{r=1}^\infty \delta_r
	= \sum_{i=1}^k \sum_{r=1}^{\infty} \frac{6\delta }{\pi^2kr^2}
	= \delta,
        \end{align*}      
where above we have used \eqref{eq:estimator} and the famous Basel identity.
\par We conclude from the above that $E$ occurs with probability at least $1 - \delta$. Henceforth let us assume that $E$ occurs. A simple induction argument shows that $1\in S_r$ for each round $r\in [R]$, which implies that Algorithm~\ref{alg:SuccElim} returns the optimal arm. Indeed, at each round $r$ for which $1\in S_r$, $E$ guarantees that for all $j\in S_r$,
\[
\hat p_{1,r}
\geq p_1-U_1-\alpha_{r,\delta_r}
= \tilde\Delta_j + p_j + U_j - \alpha_{r,\delta_r}
> p_j + U_j - \alpha_{r,\delta_r}
\geq \hat{p}_{j,r} - 2\alpha_{r,\delta_r}.
\]
and so by definition of Algorithm~\ref{alg:SuccElim}, the optimal arm $1$ is not eliminated at round $r$.
\par Now, still assuming that $E$ occurs, let us bound the sample complexity $T$. For each $i \in [k]$, denote by $T_i$ the number of times that arm $i$ is pulled. Clearly, $T=\sum_{i=1}^k T_i$. Moreover, since arm $1$ is never eliminated so long as $E$ occurs, $T\leq 2\sum_{i=2}^k T_i$. For each $i\geq 2$, arm $i$ is eliminated no later than the first round $r$ in which $\hat p_{i,r}<\hat p_{1,r}-2\alpha_{r,\delta_r}$ which, by $E$, is satisfied as soon as $p_i+U_i+\alpha_{r,\delta_r}<p_1-U_1-3\alpha_{r,\delta_r}$, or equivalently $\tilde\Delta_i>4\alpha_{r,\delta_r}$. We conclude that arm $i$ is eliminated in the first round $r$ where
\[
\tilde\Delta_i > 4\alpha_{r,\delta_r} = 4\sqrt{\frac{c \log(\tfrac{\pi^2kr^2}{6\delta})}{r}},
\]
which occurs for $r\leq C\frac{1}{\tilde\Delta_i^2} \log \left(\tfrac{k}{\delta \tilde\Delta_i}\right)$ for some universal constant $C>0$. Hence, 
\begin{equation*}
	T \leq \sum_{j=2}^k T_j = O\left(\sum_{j=2}^k \frac{1}{\tilde\Delta_i^2} \log \left(\frac{k}{\delta \tilde\Delta_i}\right)  \right).
\end{equation*}
\end{proof}

\begin{proof}[Proof of Theorem~\ref{Thm:SuccElimCBAI}]
The proof is nearly identical to that of Theorem \ref{Thm:SuccElim}. The main difference in the proof of correctness is that at each round $r\geq 0$, if any arm is still in $S_r$, it has been pulled at least
\begin{align*}
N\log\left(\frac{\pi^2 k}{2\delta} \right)
+
2N \sum_{t=1}^r \log \left(\frac{t+1}{t} \right)
+
r
&=
N\log\left(\frac{\pi^2 k(r+1)^2}{2\delta} \right)
+ r
\\ &\geq 
\left(\frac{2B^2\bar m_2^2}{\alpha_{r,\delta_r}^2} + N \right)\log\frac{3}{\delta_r}
\\ &\geq 
\max\left(\frac{2B^2\bar m_2^2}{\alpha_{r,\delta_r}^2}, N \right)\log\frac{3}{\delta_r}
\end{align*}
times. Thus, we may apply Corollary~\ref{corol:est-med} (for oblivious or prescient adversaries) or Corollary~\ref{corol:est-med-malicious} (for malicious adversaries) to obtain the identical guarantees as needed in the proof of Theorem~\ref{Thm:SuccElim} for estimating the median up to accuracy $\alpha_{r,\delta_r}$ with probability at least $1 - \delta_r$. Hence, the $(0,\delta)$-PAC guarantee follows easily by identical reasoning. The only difference in the sample complexity proof is that it we must keep track of the additional draws
\[
\sum_{i=1}^k \left[\left \lceil N \log \left(\frac{\pi^2 k}{2\delta}\right) \right \rceil + \sum_{t=1}^{T_i} \left \lceil 2N \log\left(\frac{t+1}{t}\right) \right \rceil \right]
\leq 
\sum_{i=1}^k \left[
(T_i + 1)
+
N\log \left(\frac{\pi^2 k (T_i+1)^2}{2\delta} \right) 
\right]
,
\]
where $T_i$ is the number of times that arms $i$ is pulled. By what was shown in Theorem~\ref{Thm:SuccElim}, the first term $\sum_{i=1}^k (T_i + 1) = O(\sum_{i \neq i^*} \frac{1}{\tilde\Delta_i^2} \log (\frac{k}{\delta \tilde\Delta_i}) )$, and the second term $\sum_{i=1}^k N\log (\frac{\pi^2 k (T_i+1)^2}{2\delta} )  = 
O(N\sum_{i \neq i^*}\log(\frac{k}{\delta \tilde{\Delta}_i}\log(\frac{k}{\delta \tilde{\Delta}_i}) )) = O(N\sum_{i \neq i^*}\log(\frac{k}{\delta \tilde{\Delta}_i}))$.
\end{proof}


\section{Proofs for Lower Bounds}\label{app:lb}
In this section, we make the proof sketch in Subsection~\ref{subsec:lb} formal. The proof is broken into two parts. First, we exhibit hard instances for \BAIspace in which the arms all have smoothed Bernoulli distributions. Second, we reduce this instance into a lower bound instance for \CBAI.
\par Throughout, we adopt the standard assumption in the multi-armed bandit literature \citep{mannor2004sample} that all algorithms we consider have a stopping time $T$ which is almost surely finite.

 
\subsection{\BAIspace Lower Bound}
Here, we prove a gap-dependent lower bound for the \BAIspace problem that we will make use of in the following subsection. \cite{mannor2004sample} gave the first such lower bound by exhibiting hard instances for \BAIspace using Bernoulli-distributed arms. However, our \CBAIspace reduction will not work with Bernoulli distributions (see the proof sketch in Subsection~\ref{subsec:lb}), and so here we exhibit hard instances for \BAIspace using instead a distribution that will work for our \CBAIspace reduction, namely the smoothed Bernoulli distribution.
\par Recall that we define the smoothed Bernoulli distribution with parameter $p\in (0,1)$, denoted by $\SBer(p)$, to be the uniform mixture of the Bernoulli distribution with parameter $p$ and the uniform distribution on $[0,1]$ with weights $\half$ and $\half$. It is easy to see that for $p,q\in (0,1)$, the Kullback-Leibler divergence between $\SBer(p)$ and $\SBer(q)$ is given by 
\begin{align*}
\textsf{KL}(\SBer(p),\SBer(q)) & = \frac{1}{2}\textsf{KL}(\Ber(p),\Ber(q)) \\
& = \frac{1}{2}\left(p\log\left(\frac{p}{q}\right)+(1-p)\log\left(\frac{1-p}{1-q}\right)\right).
\end{align*}
In particular, a second-order Taylor expansion yields that, for all $\eta\in (0,\half)$, there exists some positive constant $C_\eta$ such that the inequality
\begin{equation} \label{BoundKLSBer}
	\textsf{KL}(\SBer(p),\SBer(q))\leq C_\eta(p-q)^2
\end{equation}
holds for all $p,q\in [\eta,1-\eta]$.
\par With~\eqref{BoundKLSBer} in hand, the following lemma becomes a direct consequence of Lemma 1 and Remark 5 from \cite{kaufmann2016complexity}.

\begin{lemma}\label{lem:smoothed-bernoulli}
Let $k\geq 2$, $\eta\in (0,\half)$, and $p_1,\ldots,p_k\in [\eta, 1-\eta]$. Consider the instance of \BAIspace where arm $i \in [k]$ has distribution $\SBer(p_i)$. Then, for any $\alpha > 0$ and any $\delta \in (0,\tfrac{3}{20})$, any $(\alpha, \delta)$-PAC \BAIspace algorithm must use at least the following number of samples in expectation:
$$\E[T] \geq \frac{C_{\eta}}{4}\left(
\sum_{i \in [k] \setminus \{i*\}} \frac{1}{\max(\Delta_i, \alpha)^2}\log \left(\frac{1}{2.4\delta} \right)
\right),
$$
where $i^* := \argmax_{i \in [k]} p_i$, and $\Delta_i := p_{i^*} - p_i$ for each $i \in [k] \setminus \{i^*\}$.

\end{lemma}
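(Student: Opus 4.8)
The plan is to obtain Lemma~\ref{lem:smoothed-bernoulli} by instantiating the generic change-of-measure (transportation) lower bound for fixed-confidence bandits with the quadratic Kullback--Leibler estimate~\eqref{BoundKLSBer}. Concretely, Lemma~1 of \cite{kaufmann2016complexity} asserts that for any two bandit instances $\nu=(\nu_1,\dots,\nu_k)$ and $\nu'=(\nu'_1,\dots,\nu'_k)$ and any algorithm with almost surely finite stopping time $T$, one has $\sum_{j\in[k]}\E_\nu[N_j(T)]\,\textsf{KL}(\nu_j,\nu'_j)\ \ge\ \mathrm{kl}\big(\Prob_\nu(\mathcal E),\Prob_{\nu'}(\mathcal E)\big)$ for every event $\mathcal E$ measurable at time $T$, where $N_j(T)$ counts the pulls of arm $j$ and $\mathrm{kl}$ is the binary relative entropy; Remark~5 of the same paper packages this into the $\alpha$-best-arm ($\alpha$-PAC) setting. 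I would apply this with $\nu$ the given $\SBer(p_1),\dots,\SBer(p_k)$ instance.

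First, fix a suboptimal arm $i\neq i^*$ and build an alternative instance $\nu^{(i)}$ differing from $\nu$ only at arm $i$, whose parameter is perturbed to some $\tilde p_i$ so that arm $i$ becomes the \emph{unique} $\alpha$-optimal arm of $\nu^{(i)}$; as in Remark~5, this perturbation pushes arm $i$ just above $p_{i^*}+\alpha$ and is of size $\tilde p_i-p_i=\Theta(\max(\Delta_i,\alpha))+\xi$ for an arbitrarily small $\xi>0$ (its precise form depends on whether $\Delta_i$ is larger or smaller than $\alpha$, exactly as handled in Remark~5). Taking $\mathcal E=\{\hat I=i\}$ and using that a uniformly $(\alpha,\delta)$-PAC algorithm must satisfy $\Prob_{\nu^{(i)}}(\hat I=i)\ge 1-\delta$ while the corresponding probability under $\nu$ is controlled in the opposite direction, Lemma~1 together with the elementary estimate $\mathrm{kl}(\delta,1-\delta)\ge\log\tfrac{1}{2.4\delta}$ (valid precisely for $\delta\in(0,\tfrac{3}{20})$) yields $\E_\nu[N_i(T)]\,\textsf{KL}\big(\SBer(p_i),\SBer(\tilde p_i)\big)\ \ge\ \log\tfrac{1}{2.4\delta}$. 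Since all parameters stay in a fixed compact subinterval of $(0,1)$, \eqref{BoundKLSBer} gives $\textsf{KL}(\SBer(p_i),\SBer(\tilde p_i))\le C_\eta(\tilde p_i-p_i)^2\le 4C_\eta\max(\Delta_i,\alpha)^2$ after sending $\xi\downarrow 0$, hence $\E_\nu[N_i(T)]\ \ge\ \tfrac{1}{4C_\eta}\cdot\tfrac{\log(1/(2.4\delta))}{\max(\Delta_i,\alpha)^2}$. Finally, summing over $i\in[k]\setminus\{i^*\}$ via $\E_\nu[T]=\sum_{j\in[k]}\E_\nu[N_j(T)]\ge\sum_{i\neq i^*}\E_\nu[N_i(T)]$ and relabelling the $\eta$-dependent constant gives the statement (the constant denoted $C_\eta$ in the conclusion being the one produced by this chain of inequalities).

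The step I expect to require the most care is the construction of the alternative instances $\nu^{(i)}$ and the verification that the hypotheses of Remark~5 of \cite{kaufmann2016complexity} hold in our smoothed-Bernoulli setting. Two points are delicate: (a) the perturbed parameters $\tilde p_i$ must be kept inside a single compact subinterval of $(0,1)$ on which \eqref{BoundKLSBer} holds with one universal constant $C_\eta$ — this is the role of the hypothesis $p_1,\dots,p_k\in[\eta,1-\eta]$, and may force a restriction of the range of parameters (and of the gaps) when $\alpha$ is not small, exactly as will be arranged in the proof of Theorem~\ref{thm:lb-pac}; and (b) when $\Delta_i\le\alpha$, arm $i$ is itself $\alpha$-optimal in $\nu$, so the event $\{\hat I=i\}$ must be combined with the alternative instance in the manner dictated by Remark~5 in order to still produce the $\mathrm{kl}(\delta,1-\delta)$ factor. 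Beyond these, the proof is a routine transcription of the cited results once \eqref{BoundKLSBer} is in hand, leaving only bookkeeping of the numerical constants ($2.4$, $\tfrac{3}{20}$, $\tfrac14$).
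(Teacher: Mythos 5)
Your proposal is correct and follows essentially the same route as the paper, which simply records the quadratic bound $\textsf{KL}(\SBer(p),\SBer(q))\leq C_\eta(p-q)^2$ and then declares the lemma a direct consequence of Lemma~1 and Remark~5 of \cite{kaufmann2016complexity}; you have merely unpacked the change-of-measure argument that those citations encapsulate. Your flagged caveats (keeping the perturbed parameters in a compact subinterval where one constant works, and your observation that the constant in the conclusion is really the reciprocal-type constant produced by the chain of inequalities rather than literally the $C_\eta$ of \eqref{BoundKLSBer}) are both fair and are handled the same way, implicitly, in the paper.
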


\subsection{CBAI Lower Bound}\label{app:lb-2}
In this subsection, we show how to prove the lower bounds for \CBAIspace using the technique sketched in Subsection~\ref{subsec:lb}. In particular, we will show how to prove Theorem~\ref{thm:lb-pac} by ``\CBAI-lifting'' the MAB instances we proved were hard in Lemma~\ref{lem:smoothed-bernoulli}. The proof of Corollary~\ref{corol:lb} then follows immediately by letting $\alpha \to 0$; since an algorithm that returns the \textit{best} arm with probability at least $1 - \delta$, is by definition a $(0, \delta)$-PAC algorithm.

\begin{proof}[Proof of Theorem~\ref{thm:lb-pac}]
Fix any $k \geq 2$, $\delta \in (0, \tfrac{3}{20})$, $\alpha \in (0, \tfrac{1}{6})$, $\eps \in (0, \tfrac{1}{15})$, and $\tmax \in (0, \tfrac{1}{10})$.
With $\eta = \tfrac{1}{3}$, Lemma~\ref{lem:smoothed-bernoulli} proves that for any $p_1,\dots,p_k \in [\tfrac 1 3, \tfrac 2 3]$, the \BAIspace instance with $\tilde{F}_i := \SBer(p_i)$ distributed-arms has the property that any ($\alpha$, $\delta$)-PAC \BAIspace algorithm must use at least 
\begin{align}
\Omega\left(\sum_{i \in [k] \setminus \{i^*\}} \frac{1}{\max(\Delta_i, \alpha)^2}\logdel\right)
\label{eq:proof-lb-pac-Omega}
\end{align}
samples in expectation, where $i^* := \argmax_{i \in [k]} p_i$ and $\{\Delta_i\}_{i \in [k] \setminus \{i^*\} }$ are the gaps w.r.t. the arm distributions $\{\SBer(p_i)\}_{i \in [k]}$. Without loss of generality, let us assume $1 = \argmax_{i \in [k]} p_i$ (arm $1$ is the best). At this point we now treat the different adversarial settings separately, since the lower bounds and thus also the liftings differ.

\subsubsection{Lifting for Oblivious and Prescient Adversaries} Define the following distributions:
\begin{align*}
F_1 :&= \frac{1-2\eps}{2(1-\eps)}\Ber(r) + \frac{1}{2(1-\eps)}\UnifI, \quad\text{and}
\\ F_i :&= \frac{1-2\eps}{2(1-\eps)}\Ber(q_i) + \frac{1}{2(1-\eps)}\UnifI \;\;\;\; \forall i \in \{2, \dots, k\},
\end{align*}
where $r := \tfrac{p_1}{1 - 2\eps}$ and $q_i := \tfrac{p_i - 2\eps}{1 - 2\eps}$. It is not hard to see that
\begin{align*}
\tilde{F}_1 := \SBer(p_1) &= (1 - \eps) F_1 + \eps \delta_0, \quad\text{and} \\
\tilde{F_i} := \SBer(p_i) &= (1 - \eps) F_i + \eps \delta_1 \;\;\;\; \forall i \in \{2, \dots, k\}.
\end{align*}
In other words, \textit{samples generated from $\SBer(p_i)$ are equal in distribution to samples generated from the above contaminated mixture model} of $(1 - \eps) F_i$ and $\eps$ times a Dirac measure.
\par Next, a simple calculation shows that $m_1(\tilde{F_1}) = p_1$, $m_1(F_1) = p_1 + \eps$, and $m_2(F_1) = \tfrac{1-\eps}{2}$. Similarly, for any $i \in \{2, \dots, k\}$, we have $m_1(\tilde{F_i}) = p_i$, $m_1(F_i) = p_i - \eps$, and $m_2(F_i) = \tfrac{1-\eps}{2}$. Moreover, for all $i \in [k]$,  we have that  $F_i \in \calF_{B, \tmax}$ for $B = 4$ and any $\tmax < \tfrac{1}{2(1 - \eps)} \min(p_i + \eps, 1 - (p_i + \eps)) \leq \tfrac{1}{2} \cdot \frac{15}{14} \cdot \min(\tfrac{1}{3}, 1 - (\tfrac{2}{3} + \tfrac{1}{15})) = \tfrac{1}{7}$. We conclude that for each $i \in [k]$, the change in median between the distribution $F_i$ and the contaminated distribution $\tilde{F}_i$ is equal to
\[
|m_1(\tilde{F}_i) - m_1(F_i)| = \eps = 
B\sigmed(F_i)\frac{\eps}{2(1-\eps)}
= 
U_{\eps, B,\sigmed(F_i)}.
\]
Therefore, we conclude that running any $(\alpha, \delta)$-approximate \CBAIspace algorithm $\calA$ on the samples obtained from the above \BAIspace instance will result in $\calA$ returning arm $\hat{I}$ satisfying $m_1(F_i) \geq m_1(F_1) - (2U_{\eps, B,\tfrac{1-\eps}{2}} + \alpha)$ with probability at least $1 - \delta$. Whenever this event occurs, we have by the above calculations that $p_{\hat{i}} \geq p_1 - \alpha$. Therefore, $\calA$ returned an $\alpha$ approximate arm with probability at least $1 - \delta$ for this hard \BAIspace instance. We conclude from the lower bound in \eqref{eq:proof-lb-pac-Omega} that $\calA$ must use at least $\Omega\left(
\sum_{i \in [k] \setminus \{i^*\}} \frac{1}{\max(\tilde{\Delta}_i, \alpha)^2}\logdel
\right)$ samples in expectation, where $\{\tilde{\Delta}_i\}_{i \in [k] \setminus \{i^*\} }$ are the effective gaps w.r.t. the arm distributions $\{F_i\}_{i \in [k]}$.

\subsubsection{Lifting for Malicious Adversaries} The idea is similar to the oblivious and prescient case done above. The difference is that malicious adversaries can shift quantiles further (see Corollary~\ref{corol:est-med-malicious}) and so we must exhibit a lifting that exactly matches this larger shift. Consider the distributions over the \CBAIspace arms
\begin{align*}
F_1 :&= \half \Ber(p_1 + 2\eps) + \half \UnifI, \quad\text{and}
\\ F_i :&= \half \Ber(p_i - 2\eps) + \half \UnifI \;\;\;\; \forall i \in \{2, \dots, k\}.
\end{align*}
We now present the malicious \CBAIspace adversarial strategy. For the optimal arm $1$, define the joint distribution $J_1$ over $(Y_1, Z_1, D_1)$ where $Y_1 \sim F_1$, $Z_1 \sim \delta_0$, and the conditional distribution of $D_1$ given $Y_1$ is $\Ber(\eps(\tfrac{p_1}{2} + \eps)^{-1} \mathds{1}\{Y_1 = 1\})$. Similarly, for each suboptimal arm $i \in \{2, \dots, k\}$, define the joint distribution $J_i$ over $(Y_i, Z_i, D_i)$ to be $Y_i \sim F_i$ and $Z_i \sim \delta_1$, and the conditional distribution of $D_i$ given $Y_i$ is $\Ber(\eps(\tfrac{1- p_1}{2} + \eps)^{-1} \mathds{1}\{Y_i = 0\})$. It is simple to see that for each arm $i \in [k]$, the marginals are correct under each $J_i$. Indeed, a simple conditioning calculation yields
\begin{align*}
\Prob (D_1 = 1)
&=
\Prob(D_1 = 1 | Y_1 = 1) \Prob(Y_1 = 1) + \Prob(D_1 = 1 | Y_1 \neq 1) \Prob(Y_1 \neq 1)
\\ &= \eps(\tfrac{p_1}{2} + \eps)^{-1} \cdot \half(p_1 + 2\eps) + 0
\\ &= \eps.
\end{align*}
An identical argument shows that the marginal distribution of $D_i$ is also equal to $\Ber(\eps)$ for each suboptimal arm $i \in \{2, \dots, k\}$. Now, for each arm $i \in [k]$, denote by $C_i$ the corresponding contaminated distributions induced by $(1-D_i)Y_i + D_iZ_i$ where $(Y_i,Z_i,D_i) \sim J_i$. It is not hard to see that
\begin{align*}
\tilde{F}_1 := \SBer(p_1) &= C_1,\quad\text{and} \\
\tilde{F}_i := \SBer(p_i) &= C_i, \;\;\;\; \forall i \in \{2, \dots, k\}
\end{align*}
In other words, \textit{samples generated from $\SBer(p_i)$ are equal in distribution to the samples generated by the malicious \CBAIspace  adversary's distribution $C_i$.}
\par A simple calculation shows that for the optimal arm, $(F_1)^{-1}(\half) = p_1 + 2\eps$ and $(\tilde{F}_1)^{-1}(\half) = (F_1)^{-1}(\half - \eps) = p_1$. Note further that $F_1$ has cdf satisfying $F_1(s) = \half(1 - p_1 - 2\eps) + \half s$ for all $s \in [0, 1)$, and that $F_1(1) = 1$. Thus, for any $x_1, x_2 \in [Q_{L,F_1}(\half - \tmax), Q_{R,F_1}(\half + \tmax)]$,  we have that $|F(x_1) - F(x_2)| = \frac{1}{2}|x_1 - x_2|$ since $[\half \plusminus \tmax] \subseteq [0.4, 0.6]$ is contained within the interval $[\half(p_1 + 2\eps), 1 - \half(p_1 + 2\eps) ] \supseteq [\half(\tfrac{2}{3} + 2 \cdot \tfrac{1}{15}), 1 - \half(\tfrac{2}{3} + 2 \cdot \tfrac{1}{15})] = [0.4, 0.6]$. By definition, this implies that $F_1 \in \mathcal{F}_{\tmax, B_1}$ with $B_1 m_2(F_1) = 2$. (Note that $B_1$ is a finite constant since $p_1 \in (\tfrac{1}{3}, \tfrac{2}{3}) \subset (0, 1)$ implies $m_2(F_i) > 0$.) A completely identical calculation shows that each suboptimal arm $i \in \{2, \dots, k\}$ satisfies $(F_i)^{-1}(\half) = p_i - 2\eps$, $(\tilde{F}_i)^{-1}(\half) = (F_i)^{-1}(\half + \eps) = p_i$, and $F_i \in \mathcal{F}_{\tmax, B_i}$ with $B_i m_2(F_i) = 2$. Therefore, we conclude that for each $i \inf [k]$, the difference in medians between the distributions $F_i$ and $\tilde{F}_i$ is equal to
\begin{align*}
|m_1(F_i) - m_1(\tilde{F}_i)| &= 2\eps = B_im_2(F_i)\eps = U_{\eps, B_i, m_2(F_i)}^{\textsc{(malicious)}},
\end{align*}
which is exactly equal to the largest possible uncertainty.An identical reduction argument as in the oblivious and prescient adversary finishes the proof.

\end{proof}

\end{document}